\newcommand{\lfrak}{\mathfrak{l}}
\newcommand{\pfrak}{\mathfrak{p}}
\newcommand{\sfrak}{\mathfrak{s}}
\newcommand{\Afrak}{\mathfrak{A}}
\newcommand{\Gfrak}{\mathfrak{G}}
\newcommand{\Hfrak}{\mathfrak{H}}
\newcommand{\Lfrak}{\mathfrak{L}}
\newcommand{\cbf}{\mathbf{c}}
\newcommand{\wbf}{\mathbf{w}}
\newcommand{\xbf}{\mathbf{x}}
\newcommand{\zbf}{\mathbf{z}}
\newcommand{\Dbf}{\mathbf{D}}
\newcommand{\Hbf}{\mathbf{H}}
\newcommand{\Ibf}{\mathbf{I}}
\newcommand{\Lbf}{\mathbf{L}}
\newcommand{\Ubf}{\mathbf{U}}
\newcommand{\Ccal}{\mathcal{C}}
\newcommand{\Mcal}{\mathcal{M}}
\newcommand{\Ocal}{\mathcal{O}}
\newcommand{\Rcal}{\mathcal{R}}
\newcommand{\Tcal}{\mathcal{T}}
\newcommand{\Ucal}{\mathcal{U}}
\newcommand{\Vcal}{\mathcal{V}}
\newcommand{\Hscr}{\mathscr{H}}
\newcommand{\Lscr}{\mathscr{L}}
\newcommand{\Abb}{\mathbb{A}}
\newcommand{\Gbb}{\mathbb{G}}
\newcommand{\Pbb}{\mathbb{P}}
\newcommand{\Qbb}{\mathbb{Q}}
\newcommand{\Zbb}{\mathbb{Z}}
\newcommand{\Var}{\mathbf{Var}}
\newcommand{\Gal}{\operatorname{Gal}}
\newcommand{\Spec}{\operatorname{Spec}}
\newcommand{\Zhat}{\widehat{\Zbb}}
\newcommand{\Out}{\operatorname{Out}}
\newcommand{\Inn}{\operatorname{Inn}}
\newcommand{\cont}{\mathrm{cont}}
\newcommand{\smin}{\smallsetminus}
\newcommand{\Qbar}{\overline{\Qbb}}
\newcommand{\HH}{\operatorname{H}}
\newcommand{\GSL}{{\Gamma\mathrm{L}}}
\newcommand{\PGSL}{{\mathrm{P}\Gamma\mathrm{L}}}
\newcommand{\trdeg}{\operatorname{trdeg}}
\newcommand{\Aut}{\operatorname{Aut}}
\newcommand{\PAut}{\operatorname{PAut}}
\newcommand{\sep}{\mathrm{sep}}
\newcommand{\Hom}{\operatorname{Hom}}
\newcommand{\GT}{\mathrm{GT}}
\newcommand{\GThat}{\widehat{\GT}}
\newcommand{\dec}{\mathrm{dec}}
\newcommand{\Pibf}{\boldsymbol{\Pi}}
\newcommand{\pibf}{\boldsymbol{\pi}}
\newcommand{\HHbf}{\boldsymbol{\HH}}
\newcommand{\Ccalbf}{\boldsymbol{\Ccal}}
\newcommand{\iotabf}{\boldsymbol{\iota}}
\newcommand{\Planes}{\mathbf{Planes}}
\newcommand{\image}{\operatorname{image}}
\newcommand{\et}{{\text{\'et}}}
\newcommand{\Profout}{\mathbf{Prof}_{\Out}}
\newcommand{\bir}{\mathrm{bir}}
\newtheorem{theorem}{Theorem}
\newtheorem{lemma}[theorem]{Lemma}
\newtheorem{proposition}[theorem]{Proposition}
\newtheorem{fact}[theorem]{Fact}
\newtheorem{maintheorem}{Theorem}
\newtheorem*{maintheorem*}{Main Theorem}
\theoremstyle{definition}
\newtheorem{condition}{Condition}
\numberwithin{condition}{subsection}
\theoremstyle{remark}
\newtheorem{remark}[theorem]{Remark}
\renewcommand{\hat}{\widehat}
\renewcommand{\bar}{\overline}
\renewcommand{\vec}{\overrightarrow}
\renewcommand{\tilde}{\widetilde}
\renewcommand{\epsilon}{\varepsilon}
\numberwithin{equation}{section}
\numberwithin{theorem}{section}
\title{Line and Hyperplane GT-variants}
\date{\today}
\author{Florian Pop}
\thanks{F. Pop was supported by Simons Collaboration Grant 712449.}
\email{pop[at]math[dot]upenn[dot]edu}
\address{University of Pennsylvania,
Department of Mathematics,
209 S 33rd St, Philadelphia, PA 19104, USA}
\author{Adam Topaz}
\thanks{A. Topaz was supported by an NSERC discovery grant and the University of Alberta.}
\email{topaz[at]ualberta[dot]ca}
\address{Mathematical and Statistical Sciences,
University of Alberta,
632 Central Academic Building,
Edmonton, Alberta T6G 2G1,
Canada}
\keywords{Grothendieck-Teichm\"uller group,
  absolute Galois groups, hyperplane arrangements,
  (pro-$\ell$) abelian-by-central fundamental groups,
  cohomology,
  projective geometry}
\subjclass{11G99, 12F10, 12G99, 14A99}
\begin{document}
\begin{abstract}
  In this work, we introduce a variant of the Grothendieck-Teichm{\"u}ller group, defined in terms of complements of hyperplane arrangements and pro-$\ell$ two-step nilpotent fundamental groups, and prove that it is isomorphic to the absolute Galois group of $\Qbb$.
\end{abstract}

\maketitle

\setcounter{tocdepth}{1}
\tableofcontents

\section{Introduction}

One of the primary themes in Grothendieck's \emph{Esquisse d'un Programme} was to study the structure of the absolute Galois group of $\Qbb$ (and other fields) via its action on geometric objects, specifically (geometric \'etale) fundamental groups of algebraic varieties.
In this paper, we study this Galois action on certain natural quotients of the geometric \'etale fundamental groups of complements of hyperplane arrangements.
Our main result shows that the absolute Galois group of $\Qbb$ itself can be \emph{determined entirely} in terms of the (outer) automorphisms of such fundamental groups.

The primary motivation for this work arises from the relationship between the absolute Galois group of $\Qbb$ and the Grothendieck-Teichm{\"u}ller group $\GThat$.
Drinfel'd~\cite{Drinfeld1990oqqhaaoagticcwgq} described $\GThat$ \emph{explicitly} as a subgroup of $\Aut(\hat F_{2})$, where $\hat F_{2}$ denotes the free profinite group on two letters, say $x,y$.
Concretely, $\GThat$ consists of elements $\sigma \in \Aut(\hat F_{2})$ which act on $x,y$ as $\sigma(x) = x^{\lambda}$, $\sigma(y)=f^{-1}y^{\lambda} f$ for some $\lambda \in \Zhat^{\times}$ and $f$ in the (closed) commutator subgroup of $\hat F_{2}$, where the pair $(\lambda,f)$ satisfies three \emph{explicit} relations.
Essentially as a consequence of Belyi's theorem~\cite{Belyi}, it is known that that $\Gal(\Qbar|\Qbb)$ has a canonical embedding into $\Aut(\hat F_{2})$, via the identification of $\hat F_{2}$ with $\pi_{1}^{\et}(\Pbb^{1} \smin \{0,1,\infty\}, \vec{01})$.
The image of this embedding is contained in $\GThat$~\cite{Belyi}.
Arguably the most important open problem in this area is the so-called \emph{Grothendieck-Teichm\"uller conjecture} which predicts that the canonical map
\[ \Gal(\Qbar|\Qbb) \hookrightarrow \GThat \]
discussed above is an \emph{isomorphism}.

The work around $\GThat$ is quite extensive, originating with Drinfel'd~\cite{Drinfeld1990oqqhaaoagticcwgq}, Ihara~\cites{Ihara1986, Ihara1991, Ihara2000}, Deligne~\cite{Deligne}, followed by many others, e.g.~Hain-Matsumoto~\cite{HainMatsumoto}, Matsumoto~\cite{Matsumoto1996}, Harbater-Schneps~\cite{HarbaterSchneps2000fgomatgtg}, Ihara-Matsumoto~\cite{IharaMatsumoto}, Lochak-Schneps~\cite{LochakSchneps}, Nakamura-Schneps~\cite{NakamuraSchneps}, Hoshi-Mochizuki~\cite{Hoshi-Mochizuki2011}, Enriquez~\cite{Enriquez}, and including some very recent work due to Hoshi-Minamide-Mochizuki~\cite{HoshiMinamideMochizuki} and Minamide-Nakamura~\cite{MinamideNakamura2019tagotpbg}.
Nevertheless, the precise relationship between $\Gal(\Qbar|\Qbb)$ and $\GThat$, particularly whether the map $\Gal(\Qbar|\Qbb) \to \GThat$ is \emph{surjective}, remains completely open to this day.

Let us now take a more functorial point of view.
Write $\Profout$ for the category whose objects are \emph{profinite groups} and whose morphisms are continuous \emph{outer}-homomorphisms.
In other words,
\[ \Hom_{\Profout}(G,H) = \Hom_{\cont}(G,H)/\Inn(H), \]
where $\Hom_{\cont}(G,H)$ is the set of continuous homomorphisms $G \to H$ and the inner automorphism group $\Inn(H)$ acts by postcomposition.

For categories $\Vcal$ of geometrically integral $\Qbb$-varieties, let 
\[
\bar\pi_{\Vcal}: \Vcal \,\longrightarrow \, \Profout, \quad X \mapsto \pi_{1}^{\et}(X \otimes_{\Qbb}\Qbar)
\]
denote the geometric fundamental group functor.
Since we consider this functor as taking values in $\Profout$, the choice of basepoints in the computation of the fundamental group is irrelevant, and is therefore omitted from the notation above.
One has a canonical representation $\rho_{\Vcal} : \Gal(\Qbar|\Qbb) \to \Aut(\bar\pi_{\Vcal})$, and Grothendieck suggested studying $\Gal(\Qbar|\Qbb)$ via $\rho_{\Vcal}$ for categories $\Vcal$, such as the full Teichm\"uller modular tower $\Tcal={\{\Mcal_{g,n}\}}_{g,n}$, see~\cite{zbMATH01088533}.
If $\Vcal$ contains a hyperbolic curve, e.g.~$\Mcal_{0,4}= \Pbb^{1} \smin \{0,1,\infty\}$, then $\rho_{\Vcal}$ is known to be injective by work by Drinfel'd~\cite{Drinfeld1990oqqhaaoagticcwgq}, Voevodski~\cite{Voevodski1991}, Matsumoto~\cite{Matsumoto1996} and Hoshi-Mochizuki~\cite{Hoshi-Mochizuki2011}.

In relation to $\GThat$, Harbater-Schneps~\cite{HarbaterSchneps2000fgomatgtg} showed that $\GThat=\Aut^{*}(\bar\pi_{\Vcal_0})$, where $\Vcal_0$ is the full subcategory of $\Tcal$ whose objects are $\Mcal_{0,4}$ and $\Mcal_{0,5}$, while $\Aut^{*}$ refers to the collection automorphisms $\sigma \in \Aut(\bar\pi_{\Vcal_{0}})$ which preserve the conjugacy classes of ``inertia at infinity.''
On the other hand, there is recent quite significant progress on the relationship between $\GThat$ and $\Aut(\bar\pi_{\Vcal})$ with $\Vcal \subset \Tcal$, arising from the work of Hoshi-Minamide-Mochizuki~\cite{HoshiMinamideMochizuki} and Minamide-Nakamura~\cite{MinamideNakamura2019tagotpbg}.
Among other things, it follows from these works that
\[ \GThat = \Aut(\bar\pi_{\Vcal_0}) = \Aut(\bar\pi_{\Tcal_0}), \]
where $\Tcal_0$ is the genus zero part of $\Tcal$, while also $\GThat \cong \Out\left(\pi_{1}^{\et}(\Mcal_{1,2} \otimes_{\Qbb} \Qbar)\right)$.
As mentioned above, $\rho : \Gal(\Qbar|\Qbb) \to \GThat$ is known to be injective since $\Mcal_{0,4}$ is contained in $\Vcal_{0}$.
However, the \emph{surjectivity} of this map, or equivalently, the question of whether this map is an isomorphism, is one of the most important open problems in modern Galois theory.
This question is an active area of research, and has been studied by many authors. 

\subsection{Connections with the I/OM}\label{subsec:intro-IOM}
The issue of surjectivity mentioned above is related to a conjecture due to Ihara/Oda-Matsumoto, the \emph{classical I/OM} for short, asserting that $\rho_{\Vcal}$ is an isomorphism in the case where $\Vcal$ is the category of \emph{all geometrically integral $\Qbb$-varieties}.
The classical I/OM has a positive answer, see~Pop~\cite{Pop2019ftvoiomoiqomc}, Introduction, for a short historical note.
However, the solution to the classical I/OM (and its refinements/extensions) is obtained via completing \emph{Bogomolov's Program in birational anabelian geometry} (BP)~\cite{Bogomolov1991} under additional hypotheses which are satisfied in the context of the I/OM;~see \cites{Pop2019ftvoiomoiqomc,Topaz2018tgaoglatmliom}.

More precisely, BP, which is still essentially open in general, asserts that a function field $K|k$, with $\trdeg(K|k)>1$ and $k$ algebraically closed, can be recovered, in a suitable functorial sense, from its pro-$\ell$ abelian-by-central Galois group $\Pi^{c}_{K}\to \Pi^{a}_{K}$ (see \S\ref{subsec:intro-fund-groups} below for notations).
In this context we consider the group
\[ \PAut^{c}(\Pi_{K}^{a}) := \image(\Aut(\Pi_{K}^{c}) \to \Aut(\Pi_{K}^{a}))/\Zbb_{\ell}^{\times} \]
as well as the canonical map $\Aut(K) \to \PAut^{c}(\Pi_{K}^{a})$.
We say that ``BP holds'' provided that $\Aut(K) \to \PAut^{c}(\Pi_{K}^{a})$ is an isomorphism.
If BP also holds for all finite extensions $L|K$, then it follows that the canonical map $\Aut(K) \to \Out(\Gal(K^{\sep}|K))$ is also an isomorphism.

This approach can be used to study $\Gal(\Qbar|\Qbb)$ in the case $\Vcal = \Vcal_{0}^{\bir}$, a \emph{birational} variant of $\Vcal_{0}$ mentioned above, as follows.
First, recall that
\[ \Mcal_{0,4} = \Pbb^{1} \smin \{0,1,\infty\}, \ \ \Mcal_{0,5} = (\Mcal_{0,4} \times \Mcal_{0,4}) \smin \Delta \]
were $\Delta$ is the diagonal.
In particular, $\Mcal_{0,5}$ can be identified with the open affine subvariety $\Abb^{2} \smin \Lbf_{0}$ of $\Abb^{2} = \Spec \Qbb[x,y]$, where $\Lbf_{0}$ is the zero-locus of the following function:
\[ x \cdot (1-x) \cdot y \cdot (1-y) \cdot (x-y). \]
The objects of $\Vcal_{0}^{\bir}$ consist of $\Mcal_{0,4}$, $\Mcal_{0,5}$, and any nonempty open affine $\Qbb$-subvariety $U$ of $\Mcal_{0,5}$, while the morphisms of $\Vcal_{0}^{\bir}$ are the identity morphisms, the inclusions among the $U$, and the projections
\[ \pi_t : U \to \Pbb^{1} \smin \{0,1,\infty\} = \Mcal_{0,4} \]
defined by any one of the functions $t \in \Sigma_{0} := \{x,y,x-y\}$ whenever $U$ is disjoint from the base locus of the rational map $t$.

Setting $K_{0} = \Qbb(x,y)$, $k = \Qbar$, and $K = k(x,y)$, the morphisms $\pi_{t} : \Spec K_{0} \to \Mcal_{0,4}$ for $t \in \Sigma_{0}$ give rise to projections $\pi_{t} : \Gal(\bar K|K) \to \pi_1^{\et}(\Mcal_{0,4} \otimes_{\Qbb} \Qbar)$ in $\Profout$.
Let $\Aut_{\Sigma_{0}}(K) \subset \Aut(K)$ denote the subgroup of automorphisms which preserve the subring $k[t,1/t,1/(1-t)]$ for every $t \in \Sigma_{0}$, and $\Out_{\Sigma_0}(\Gal(\bar K|K)) \subset \Out(\Gal(\bar K|K))$ be the subgroup of all the automorphisms which preserve the kernels of  the projections $\pi_{t}$ for $t\in\Sigma_0$.
One obtains an embedding
\[ \Aut(\bar\pi_{\Vcal_{0}^{\bir}}) \hookrightarrow \Out_{\Sigma_{0}}(\Gal(\bar K|K)) \]
by taking limits along the various open subvarieties $U$.
It turns out that endowing $\Gal(\bar K|K)$ with the additional data of the projections $\pi_{t}$ for $t \in \Sigma_{0}$ is sufficient to complete BP for $K$ and its finite extensions.
Using this, one deduces that $\Aut_{\Sigma_{0}}(K) = \Out_{\Sigma_{0}}(\Gal(\bar K|K))$, while the $\pi_{t}$, $t \in \Sigma_{0}$, rigidify the situation so that one has
\[ \Gal(\Qbar|\Qbb) \hookrightarrow \Aut(\bar\pi_{\Vcal_{0}^{\bir}}) \hookrightarrow \Out_{\Sigma_0}(\Gal(\bar K|K))= \Gal(\Qbar|\Qbb). \]
This finally shows that the canonical map $\Gal(\Qbar|\Qbb) \to \Aut(\bar\pi_{\Vcal_{0}^{\bir}})$ is an isomorphism.
Replacing $\pi_1^{\et}$ by its pro-$\ell$ abelian-by-central quotient $\Pi^c \to \Pi^a$, and $\Out(\bar\pi_{\Vcal})$ with $\PAut^c(\Pi^{a}_{\Vcal})$ (again, see \S\ref{subsec:intro-fund-groups} for the notation), the analogous assertions hold in this setting as well, hence the canonical map:
\[ \Gal(\Qbar|\Qbb) \to \PAut^{c}(\Pi^{a}_{\Vcal_{0}^{\bir}}) \]
is an \emph{isomorphism}.
See~\cite{Pop2019ftvoiomoiqomc} for more details.

\subsection{The category $\Lscr_0$}\label{subsec:Lscr}
In this work, we consider the \emph{line-arrangement} variant of $\Vcal_0^{\bir}$, and prove similar results in this new context which is much more restrictive in the Galois-theoretic sense.
Precisely, let $\Lscr_{0}\subset\Vcal_0^{\bir}$ be the category of $\Qbb$-varieties whose objects are $\Mcal_{0,4}$, $\Mcal_{0,5}$ and all $\Ucal_\Lbf = \Abb^2 \smin \Lbf$ where $\Lbf$ is a closed $\Qbb$-subvariety which is (geometrically) a finite union of affine lines, and whose morphisms are the identity morphisms, the inclusions $\Ucal_{\Lbf_{1}} \hookrightarrow \Ucal_{\Lbf_{2}}$ for $\Lbf_{2} \subset \Lbf_{1}$, and the projections $\pi_{t} : \Ucal_{\Lbf} \to \Pbb^{1} \smin \{0,1,\infty\}$ for $t \in \Sigma_{0}$ with $\Sigma_{0}$ as above, whenever $\Lbf$ is sufficiently large.
A main consequence of our general result (see Theorem~\ref{maintheorem:general}) is as follows.
\begin{maintheorem*}\label{maintheorem:intro}
The canonical map $\rho : \Gal(\Qbar|\Qbb) \to \PAut^{c}(\Pi^{a}_{\Lscr_{0}})$ is an isomorphism.
\end{maintheorem*}

Since $\Lscr_{0} \subset \Vcal_{0}^{\bir}$, the Main Theorem above could be seen as an intermediate step between the category $\Vcal_0^{\bir}$ which is of \emph{birational nature} and yields $\Gal(\Qbar|\Qbb)$, and the category $\Vcal_{0}$ which is {\it not of birational nature\/} and yields $\GThat$. Therefore we view this work as a % can therefore be seen as a significant
step toward understanding the relationship between $\Gal(\Qbar|\Qbb)$ and $\GThat$, because there is no apparent birational content to the categories of varieties used in defining the latter.

\subsection{Strategy of proof}\label{subsection:sketch}
The general outline of the proof of the Main Theorem above is as follows.
See also the discussion in the next subsection for a comparison of
the strategy and techniques of this paper with the ones used to prove that $\Gal(\Qbar|\Qbb)=\PAut^c(\Pi^a_{\Vcal_{0}^{\bir}})$.

Most of the work takes place inside of the following limit object:
\[ \Pibf^{\star} := \varprojlim_{\Lbf} \Pi_{\Ucal_{\Lbf}}^{\star},\quad \star \in \{a,c\}, \]
where $\Lbf \subset \Abb^{2}_{\Qbar}$ varies over all line arrangements defined over $\Qbb$.
The argument proceeds roughly as follows:
\begin{enumerate}
  \item First, we recover the inertia groups associated to all lines inside of $\Pibf^{a}$ using $\Pibf^{c}$ along with some additional data arising from the structure of $\Lscr_{0}$, thereby obtaining an action of $\Aut^{c}(\Pi^{a}_{\Lscr_{0}})$ on the collection of lines.
  \item Second, we identify the collection of lines with points in the dual projective plane, and show that this action is compatible with the lines in this projective space.
  \item Finally, we apply the fundamental theorem of projective geometry~\cite{artin} and eventually show that this action factors through $\Gal(\Qbar|\Qbb)$.
  \item To conclude, we prove that the kernel of the induced map $\Aut^{c}(\Pi^{a}_{\Lscr_{0}}) \to \Gal(\Qbar|\Qbb)$ is contained in the image of the canonical map $\Zbb_{\ell}^{\times} \to \Aut(\Pi^{a}_{\Lscr_{0}})$.
\end{enumerate}

\subsection{Comparison with the proof of $\Gal(\Qbar|\Qbb)= \PAut^c(\Pi^a_{\Vcal_{0}^\bir})$}
The description of $\Gal(\Qbar|\Qbb)$ arising from BP, proceeds roughtl as follows.
For $K_{0} = \Qbb(x,y)$, $k = \Qbar$ and $K = k(x,y)$ as in \S\ref{subsec:intro-IOM},
let $\hat{K^{\times}}$ denote the $\ell$-adic completion of the multiplicative group $K^{\times}$.
Using Kummer theory and a fixed isomorphism $\Zbb_{\ell}(1) \cong \Zbb_{\ell}$, one has an identification:
\[ \hat{K^{\times}} \, \cong \, \Hom(\Pi_{K}^{c},\Zbb_{\ell}). \]
Notice that the
kernel of the $\ell$-adic completion map  $K^\times\to\hat{K^\times}$ is $k^\times$, hence it induces an embedding
$K^{\times}\!/k^{\times} \hookrightarrow \hat{K^{\times}}$, while $K^{\times}\!/k^{\times}$ can be identified with the projectivization of the $k$-vector space $(K,+)$.
The strategy of BP now proceeds as follows:
\begin{enumerate}
  \item First, using the projections $\pi_t:\Pi^c_K\to\Pi^c_{\Pbb^1\smin\{0,1,\infty\}}$,
  one identifies the prime divisors of $K|k$ (among the quasi-prime divisors described via the \textit{local theory}).
  \item Second, one recovers $k(u)^{\times}\!/k^{\times}\hookrightarrow\hat{K^{\times}}$
  for all $u\in K\smin k$, thus $K^{\times}/k^{\times}\hookrightarrow\hat{K^{\times}}$ as a subgroup.
  One also recovers the \emph{projective lines}
  $\lfrak_{f,g}:=(kf+kg)^\times\!/k^\times\subset K^\times\!/k^\times$, where $f,g\in K$ are $k$-linearly independent, as being $\lfrak_{f,g}= f \cdot \lfrak_{1,u}$ with $u=g/f$.
  \item Finally, apply the fundamental theorem of projective geometry~\cite{artin} to obtain $(K,+)$ as a $k$-vector space, and use the compatibility with the multiplicative structure of $K^{\times}\!/k^{\times}$ to obtain the field structure of $K$.
\end{enumerate}
Moreover, one shows that the recipes in steps (1), (2) and (3) are invariant under the action of $\PAut^c(\Pi^a_{\Vcal_0^{\bir}})$.
Thus one obtains a morphism
\[ \PAut^c(\Pi^a_{\Vcal_0^{\bir}}) \to \Aut_{\Sigma_0}(K) = \Gal(\Qbar|\Qbb) \]
which is then shown to be the inverse of the canonical map $\Gal(\Qbar|\Qbb) \to \PAut^{c}(\Pi^{a}_{\Vcal_{0}^{\bir}})$.

In practice, both steps (1) and (2) above rely on the so-called \emph{local theory} whereby one detects inertia and decomposition groups in $\Pi_{K}^{a}$, associated to quasi-prime divisors of $K|k$.
This local theory builds on the \emph{theory of rigid elements}~\cite{ArasonElmanJacob} which exploits the {\it field structure\/} of $K$; see~\cite{Topaz2017}. 
Furthermore, in order to recover $\lfrak_{f,g}$ in step~(2), one uses
$\lfrak_{1,u}$ with $u=g/f$.
Hence in step (2) one also relies on the full field structure of $K$, see~\cite{Pop2012} for more on the general global theory.
Thus, both the local and global parts of the BP strategy used to obtain the equality $\Gal(\Qbar|\Qbb) = \Aut^c(\Pi^a_{\Vcal_{0}^\bir})$ \emph{depend in an essential way} on being in a \emph{birational context}.

As outlined in \S\ref{subsection:sketch}, we still have a \emph{local} and a \emph{global} portion to the proof of our Main Theorem, and we again eventually rely on the fundamental theorem of projective geometry~\cite{artin}.
However, if one mimics the BP strategy outlined in the steps above, then Kummer Theory yields the subgroup of $\hat{K^\times}$ generated by Kummer classes of functions of the form $a + b x + c y \in K$, $a,b,c \in k$. 
While this subgroup is contained in $K^\times\!/k^\times$, it is \emph{not a projective-linear subspace}, and thus the BP strategy breaks down.
In other words, there is no (obvious or non-obvious) candidate for a $k$-projective space arising from Kummer Theory that plays the role of $K^\times\!/k^\times$.
Therefore we had to develop some genuinely new techniques for both the local and global portions of our argument which do not rely on the arithmetic structure of function fields of any of the varieties involved, and which do not apply the fundamental theorem of projective geometry on an object constructed using Kummer theory.

\section{Preparation and results}

We now introduce the notation and terminology necessary to state our main result.

\subsection{Hyperplane arrangements}
Let $k$ be an algebraically closed field which will be fixed throughout.
By a \emph{variety}, we mean a $k$-variety, i.e.~an integral scheme which is separated and of finite type over $k$.
We will always omit $k$ from the notation whenever possible.
For example, we write $\Abb^{n}$ for $\Abb^{n}_{k}$, $\Pbb^{n}$ for $\Pbb^{n}_{k}$, etc.

A morphism of varieties is a morphism of $k$-schemes and we denote by $\Var$ the category of varieties.
If $k_{0}$ is a subfield of $k$, we say that a variety $X$ resp.~a morphism of varieties $X \to Y$ is \emph{defined over $k_{0}$} if it is the base-change of some $k_{0}$-variety resp.~some morphism of $k_{0}$-varieties.
If $X$ is a variety, then by a \emph{closed subvariety} of $X$ we mean a reduced closed subscheme of $X$; in particular, our convention is that subvarieties may have many irreducible components.
If $Z \subset X$ is a closed subvariety and $X$ is the base-change of a $k_{0}$-scheme $X_{0}$, then we say that $Z$ is \emph{defined over $k_{0}$} provided that $Z$ is the base-change of some closed subscheme $Z_{0} \subset X_{0}$.

Let $\Afrak$ be a variety and $x_{1},\ldots,x_{n} \in \Ocal(\Afrak)$ be given.
Put $\xbf := (x_{1},\ldots,x_{n})$ and consider the induced map $\Afrak \to \Abb^{n}$ defined by $t_{i} \mapsto x_{i}$.
We will say that the pair $(\Afrak,\xbf)$ is an \emph{affine space} provided that this map is an isomorphism, and in this case call $\xbf$ a \emph{system of coordinates} on $\Afrak$.
We will usually omit $\xbf$ from the notation when referring to affine spaces, and simply write $\Afrak$ instead of $(\Afrak,\xbf)$.

A \emph{partial system of coordinates} $\zbf = (z_{1},\ldots,z_{m})$ on a variety $\Afrak$ is a tuple which can be extended to a system of coordinates.
If $\xbf = (x_{1},\ldots,x_{m})$ is a (partial) system of coordinates, we write $\Abb^{m}_{\xbf}$ for $\Spec[x_{1},\ldots,x_{m}]$ with the $x_{i}$ considered as indeterminant variables, and write $\pi_{\xbf} : \Afrak \to \Abb^{m}_{\xbf}$ for the associated projection which sends the element $x_{i} \in k[x_{1},\ldots,x_{m}]$ to $x_{i} \in \Ocal(\Afrak)$.
If $\xbf = (\varpi)$ consists of a single element, we abbreviate the notation as $\Abb^{1}_{\varpi} := \Abb^{1}_{\xbf}$ and $\pi_{\varpi} : \Afrak \to \Abb^{1}_{\varpi}$ for the associated projection.

Let $\Afrak$ be an affine space with system of coordinates $\xbf = (x_{1},\ldots,x_{n})$.
By a \emph{hyperplane} we mean a closed subvariety of $\Afrak$ which is the zero-locus of a function of the form
\[ a_{0} + a_{1} \cdot x_{1} + \cdots + a_{n} \cdot x_{n}, \ \ a_{i} \in k, \ (a_{1},\ldots,a_{n}) \ne 0. \]
Note that the notion of a hyperplane in $\Afrak$ depends implicitly on a choice of system of coordinates $\xbf$, and we will ensure that $\xbf$ is clear from context whenever we speak about hyperplanes.

Such hyperplanes will usually be denoted using the letter $\Hfrak$ possibly decorated in some way.
If $\varpi := a_{0} + a_{1} \cdot x_{1} + \cdots + a_{n} \cdot x_{n}$ is a function as above, we will write $\Hfrak_{\varpi}$ for the associated hyperplane obtained as the zero-locus of $\varpi$.
The collection of all hyperplanes in an affine space $\Afrak$ will be denoted by $\Planes_{\Afrak}$, or just $\Planes$ if $\Afrak$ is understood from context.

A \emph{hyperplane arrangement} in an affine space $\Afrak$ is a finite union of the form
\[ \Hfrak_{1} \cup \cdots \cup \Hfrak_{n}, \ \Hfrak_{i} \in \Planes_{\Afrak}, \]
considered as a reduced closed subvariety of $\Afrak$.
Hyperplane arrangements will usually be denoted using the letter $\Hbf$ possibly decorated in some way.
Given a hyperplane arrangement $\Hbf$ in $\Afrak$, we write $\Ucal_{\Hbf} := \Afrak \smin \Hbf$ for its complement.

If $\xbf = (x_{1},\ldots,x_{n})$ is a system of coordinates for an affine space $\Afrak$, $\varpi \in k[x_{1},\ldots,x_{n}]$ is a linear polynomial whose zero-locus is a hyperplane $\Hfrak_{\varpi}$, and $\Hbf$ is a hyperplane arrangement in $\Afrak$ which contains $\Hfrak_{\varpi}$, then $\pi_{\varpi} : \Afrak \to \Abb^{1}_{\varpi}$ restricts to a morphism $\pi_{\varpi} : \Ucal_{\Hbf} \to \Gbb_{m}$.
If $\Hbf$ also contains $\Hfrak_{1-\varpi}$, then we may restrict further to obtain $\pi_{\varpi} : \Ucal_{\Hbf} \to \Pbb^{1} \smin \{0,1,\infty\}$.

\subsection{The category $\Hscr$}
Let $k_{0}$ be a perfect field with algebraic closure $k$, and $S \subset k_{0}$ any subset with $0 \in S$.
Let $\Afrak$ be an affine space with system of coordinates $\xbf = (x_{1},\ldots,x_{n})$.
We define a subcategory $\Hscr_{k_{0},\Afrak}$ of $\Var$ as follows.
The objects of $\Hscr_{k_{0},\Afrak}$ are $\Ucal_{\Hbf}$ for $\Hbf$  hyperplane arrangements in $\Afrak$ which are defined over $k_{0}$ (as a closed subvariety of $\Afrak$).
The morphisms in $\Hscr_{k_{0},\Afrak}$ are the inclusions $\Ucal_{\Hbf_{1}} \hookrightarrow \Ucal_{\Hbf_{2}}$ whenever $\Hbf_{2} \subset \Hbf_{1}$.
We write $\Hscr_{\xbf,S}$ for the smallest subcategory of $\Var$ containing $\Hscr_{k_0,\Afrak}$, the object $\Pbb^{1} \smin \{0,1,\infty\}$, and the projections
\[ \pi_{\varpi} : \Ucal_{\Hbf} \to \Pbb^{1}_{k} \smin \{0,1,\infty\} \]
for $\varpi$ any function of the form $x_{i}-c$ for $1 \le i \le n$ and $c \in S$, or of the form $x_{i} - x_{j}$ for $1 \le i < j \le n$, whenever $\Hbf$ contains the hyperplanes $\Hfrak_{\varpi}$ and $\Hfrak_{\varpi-1}$.
Note that the objects and morphisms of $\Hscr_{\xbf,S}$ are all defined over $k_{0}$.

\subsection{Fundamental groups}\label{subsec:intro-fund-groups}
Let $\ell$ be a prime different from the characteristic of $k$, and $\Lambda$ any nontrivial quotient of $\Zbb_{\ell}$.
Both $\ell$ and $\Lambda$ will be fixed from now on, and we shall write $\HH^{i}(-)$ instead of $\HH^{i}(-,\Lambda)$.
For a profinite group $\Pi$, write $\Pi^{(2)}$ for the left kernel of the canonical pairing
\[ \Pi \times \HH^{1}(\Pi) \to \Lambda, \]
and put $\Pi^{a} = \Pi/\Pi^{(2)}$.
Inflation provides a canonical isomorphism
\[ \HH^{1}(\Pi^{a}) \cong \HH^{1}(\Pi) \]
and so the usual five-term exact sequence restricts to an exact sequence of the form
\[ 0 \to \HH^{1}(\Pi^{(2)})^{\Pi} \xrightarrow{d_{2}} \HH^{2}(\Pi^{a}) \to \HH^{2}(\Pi). \]
Let $\HH^{2}(\Pi^{a})_{\dec}$ denote the submodule of $\HH^{2}(\Pi^{a})$ generated by cup-products of elements of $\HH^{1}(\Pi^{a})$, and let $\HH^{1}(\Pi^{2})^{\Pi}_{0}$ denote its preimage in $\HH^{1}(\Pi^{(2)})^{\Pi}$.
We shall write $\Pi^{(3)}$ for the left kernel of the canonical pairing
\[ \Pi^{(2)} \times \HH^{1}(\Pi^{(2)})^{\Pi}_{0} \to \Lambda \]
and put $\Pi^{c} := \Pi/\Pi^{(3)}$.

For a $k$-variety $X$, we write $\Pi_{X}$ for the \'etale fundamental group of $X$ with respect to some base point, and $\Pi_{X}^{c}$ resp.~$\Pi_{X}^{a}$ for the quotients of $\Pi_{X}$ as defined above.
For $\star \in \{a,c\}$, the object $\Pi_{X}^{\star}$ is functorial in $X$, taking values in the category $\Profout$.
Whenever $\Vcal$ is a subcategory of $\Var$, we write $\Pi_{\Vcal}^{\star}$ for the restriction of the functor $X \mapsto \Pi_{X}^{\star}$ to $\Vcal$.
Note that we have a natural surjective morphism $\Pi_{\Vcal}^{c} \twoheadrightarrow \Pi_{\Vcal}^{a}$, and any automorphism of $\Pi_{\Vcal}^{c}$ induces a compatible automorphism of $\Pi_{\Vcal}^{a}$.
We write
\[ \Aut^{c}(\Pi_{\Vcal}^{a}) := \image\left(\Aut(\Pi_{\Vcal}^{c}) \to \Aut(\Pi_{\Vcal}^{a})\right) \]
for the group of automorphisms of $\Pi_{\Vcal}^{a}$ which lift to an automorphism of $\Pi^{c}_{\Vcal}$.

Note that $\Lambda^{\times}$ acts on the functor $\Pi_{\Vcal}^{a}$ canonically, namely
$\epsilon \in \Lambda^{\times}$ acts on $\Pi_{X}^{a}$ by multiplication since it is, in particular, a $\Lambda$-module.
We put $\PAut(\Pi^{a}_{\Vcal}) := \Aut(\Pi^{a}_{\Vcal})/\Lambda^{\times}$, and write $\PAut^{c}(\Pi_{\Vcal}^{a})$ for the image of $\Aut^{c}(\Pi_{\Vcal}^{a})$ in $\PAut(\Pi^{a}_{\Vcal})$.

Suppose now that $k_{0}$ is a perfect subfield of $k$ whose algebraic closure is $k$.
If the objects and morphisms in $\Vcal$ are all defined over $k_{0}$, then functoriality provides us with a canonical morphism
\[ \rho : \Gal(k|k_{0}) \to \Aut^{c}(\Pi^{a}_{\Vcal}) \twoheadrightarrow \PAut^{c}(\Pi^{a}_{\Vcal}). \]

\subsection{Main result}

The Main Theorem stated in subsection~1.2
%page~\pageref{maintheorem:intro} 
is the special case of the following general result, where $k_{0} = \Qbb$, $n = 2$, $S = \{0\}$ and $\Lambda = \Zbb_{\ell}$.
\begin{maintheorem}\label{maintheorem:general}
  Let $k_{0}$ be a perfect field of characteristic $\neq \ell$ with algebraic closure $k$, and $S$ a set of generators of $k_{0}$ as a field extension of its prime subfield, with $0 \in S$.
  Let $\Afrak$ be an affine space of dimension at least two with system of coordinates $\xbf = (x_{1},\ldots,x_{n})$.
  Then the canonical map
  \[ \rho : \Gal(k|k_{0}) \to \PAut^{c}(\Pi_{\Hscr_{\xbf,S}}^{a}) \]
  is an isomorphism.
\end{maintheorem}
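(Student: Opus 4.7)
The plan is to adapt, to the far more constrained linear setting, the four-step program sketched in \S\ref{subsection:sketch}, where the essential difference is that only linear functions (not all rational functions in the coordinates) are available as probes of the Galois action. Fix $\xbf$, and work inside the inverse-limit objects
\[ \Pibf^{\star} := \varprojlim_{\Hbf} \Pi_{\Ucal_{\Hbf}}^{\star}, \qquad \star \in \{a,c\}, \]
where $\Hbf$ ranges over hyperplane arrangements in $\Afrak$ defined over $k_{0}$. Every element of $\Aut^{c}(\Pi_{\Hscr_{\xbf,S}}^{a})$ acts compatibly on these limits, and injectivity of $\rho$ is a formal consequence of containing enough coordinate/difference pencils; the entire task is to manufacture a natural inverse by extracting from this action an element of $\Gal(k|k_{0})$.

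The first step is a \emph{linear local theory}: recover, purely in terms of the pair $\Pibf^{c} \twoheadrightarrow \Pibf^{a}$ together with the prescribed projections $\pi_{\varpi}$, the conjugacy class of inertia $I_{\Hfrak} \subset \Pibf^{a}$ attached to every hyperplane $\Hfrak \in \Planes_{\Afrak}$ (not only those with $\varpi$ of the distinguished form). For the coordinate hyperplanes $x_i = c$, $c \in S$, and the difference hyperplanes $x_i = x_j$, the given projections $\pi_{\varpi} : \Ucal_{\Hbf} \to \Pbb^{1} \smin \{0,1,\infty\}$ detect inertia directly, as in BP. For a general $\Hfrak$ one should use the Zassenhaus-graded information in $\Pibf^{c}$, together with the comparison maps $\Ucal_{\Hbf \cup \Hfrak} \hookrightarrow \Ucal_{\Hbf}$, to isolate $I_{\Hfrak}$ by its characteristic commutation pattern with the already-identified inertia of the coordinate-type hyperplanes, which probe the slopes and intercepts of $\Hfrak$ by restriction of $\pi_{\varpi}$ to coordinate lines.

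Once inertia has been canonically recovered at every hyperplane, each $\sigma \in \Aut^{c}(\Pi_{\Hscr_{\xbf,S}}^{a})$ induces a bijection of $\Planes_{\Afrak}$. Identifying $\Planes_{\Afrak}$ with the complement of the point $[1:0:\cdots:0]$ in the dual projective space $\Pbb(k^{n+1})^{\vee}$ via the coefficients of the defining linear form, I would next show that $\sigma$ preserves collinearity, i.e.\ sends each pencil $\{\Hfrak_{\alpha\varpi_{1}+\beta\varpi_{2}}\}$ to a pencil; the natural expectation is that three hyperplanes lie in a pencil iff their inertia generators satisfy a characteristic triple relation in $\Pibf^{c}$ inside any arrangement containing them, read off from the cohomology of the arrangement complement. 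Artin's fundamental theorem of projective geometry~\cite{artin}, applicable since the dual projective space has dimension $n \ge 2$, then produces a pair $(\sigma_{0}, A)$ with $\sigma_{0} \in \Aut(k)$ and $A \in \GL_{n+1}(k)$ inducing the collineation. Compatibility with the fixed projections $\pi_{x_{i}-c}$ and $\pi_{x_{i}-x_{j}}$ pins down $A$ up to the scalar ambiguity and forces $\sigma_{0}$ to fix $S$ pointwise; since $S$ generates $k_{0}$, one obtains $\sigma_{0} \in \Gal(k|k_{0})$, yielding the desired map $\Aut^{c}(\Pi_{\Hscr_{\xbf,S}}^{a}) \to \Gal(k|k_{0})$.

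To finish, the assignment $\sigma \mapsto \sigma_{0}$ is verified to invert $\rho$ modulo $\Lambda^{\times}$, and its kernel is identified: a $\sigma$ acting trivially on $\Planes_{\Afrak}$ acts by some scalar on each recovered inertia procyclic subgroup, and the commutator pairing in $\Pibf^{c}$ forces these scalars to coincide with a single $\epsilon \in \Lambda^{\times}$. The main obstacle is the first step: the rigid-element machinery~\cites{ArasonElmanJacob,Topaz2017} that drives the BP local theory exploits the multiplicative structure of $K^{\times}$ and is unavailable here, since the Kummer subgroup of $\widehat{K^{\times}}$ spanned by the linear forms $a_{0}+\sum a_{i}x_{i}$ is not closed under multiplication; a purely group-cohomological criterion for linear-inertia, using only the Zassenhaus filtration of the two-step nilpotent quotient and the prescribed projections, must be developed from scratch. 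Establishing the collinearity criterion in Step~2 without recourse to the field structure of $k(\Afrak)$ is a close second difficulty.
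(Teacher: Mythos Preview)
Your four-step outline matches the paper's strategy essentially verbatim, and the global steps---collinearity via a pencil relation read off from the two-step nilpotent structure, the fundamental theorem of projective geometry, rigidification via the distinguished projections, and identification of the kernel with $\Lambda^{\times}$ by propagating a single scalar along dependent triples---are all carried out in the paper just as you sketch.

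The genuine gap is in Step~1, which you correctly flag as the main obstacle but leave unsolved. Your proposed mechanism (bootstrap from coordinate-type inertia to general inertia via ``commutation patterns'') is not what the paper does, and it is unclear it can be made to work: commutation of a candidate $\sigma$ with a single coordinate-inertia element only constrains $\sigma$ along one pencil, and there is no evident way to exclude the many $\sigma \in \Pibf^{a}$ supported on infinitely many hyperplanes that happen to commute with all coordinate-type inertia. The paper's solution has two ingredients you are missing. First, a preliminary \emph{linear projection lemma}: for every sub-tuple $\zbf$ of $\xbf$, the full projection $\pibf_{\zbf} : \Pibf^{a}_{\Afrak} \to \Pibf^{a}_{\zbf}$ (equivalently, the image of $\iotabf_{\zbf} : \HHbf^{1}_{\zbf} \hookrightarrow \HHbf^{1}_{\Afrak}$) is $\Gfrak$-equivariant. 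This is \emph{not} given by the category $\Hscr_{\xbf,S}$, which only furnishes projections to $\Pbb^{1}\smin\{0,1,\infty\}$; it is proved by induction on $|\zbf|$, characterizing the image of $\iotabf_{\wbf}$ in terms of the image of $\iotabf_{\zbf}$, the given map $\iota_{w}$, and the commutator relation. Second, the local theory itself: a pair $(\Ibf,\Dbf)$ equals $(\Ibf_{\Hfrak_{0}},\Dbf_{\Hfrak_{0}})$ for some hyperplane $\Hfrak_{0}$ dominating $\Abb^{n-1}_{\zbf}$ if and only if $\Ibf = \Lambda\cdot\sigma$ with $\sigma \notin \ell\cdot\Pibf^{a}$, $\Dbf = \Ccalbf(\sigma)$, $\pibf_{\zbf}(\sigma)=0$, the induced map $\Ccalbf(\sigma)/\Lambda\sigma \to \Pibf^{a}_{\zbf}$ is an isomorphism, and $\sigma[\Hfrak']=0$ for \emph{some} hyperplane $\Hfrak'$ dominating $\Abb^{n-1}_{\zbf}$. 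The last condition is verified $\Gfrak$-equivariantly using the distinguished projections $\iota_{x_{i}}$ and $\iota_{x_{r}-x_{s}}$, and the isomorphism condition uses the linear projection lemma in an essential way. Without that lemma, none of the conditions in this characterization are visibly $\Gfrak$-stable, and the action on $\Planes$ cannot be constructed.
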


\section{Cohomology}

Throughout this work, we write $\HH^{i}(-) := \HH^{i}(-,\Lambda(i))$ for the $i$-th (geometric) \'etale cohomology group ($\ell$-adic cohomology in the case $\Lambda = \Zbb_{\ell}$) with values in the Tate twist $\Lambda(i)$.
We will specify the coefficients in cohomology if they differ from the convention above.

\subsection{Kummer classes of hyperplanes}\label{subsec:kummer-hyperplanes}

Let $\Afrak$ be an affine space with system of coordinates $\xbf = (x_{1},\ldots,x_{n})$.
Let $\Hfrak$ a hyperplane in $\Afrak$ and $\varpi \in k[x_{1},\ldots,x_{n}]$ be a linear polynomial whose zero-locus is $\Hfrak$.
Recall that $\varpi$ can be considered as a morphism $\pi_{\varpi} : \Afrak \to \Abb^{1}$ which restricts to a morphism $\pi_{\varpi} : \Afrak \smin \Hfrak \to \Gbb_{m}$.
If $U$ is any nonempty open subset of $\Abb^{1}$ and $V$ any nonempty open subset of $\pi_{\varpi}^{-1}(U)$, then we denote by $\pi_{\varpi} : V \to U$ the morphism induced by restricting $\pi_{\varpi}$ and
\[ \iota_{\Hfrak} : \HH^{1}(U) \to \HH^{1}(V) \]
the corresponding morphism on cohomology.
These morphisms are of course compatible with restriction along open sets.

In the case where $U = \Gbb_{m} = \Abb^{1} \smin \{0\}$, hence $\HH^{1}(U) = \HH^{1}(\Gbb_{m}) = \Lambda$, we write $[\Hfrak] := \iota_{\Hfrak}(1)$ and call $[\Hfrak]$ the \emph{Kummer class} associated to $\Hfrak$.
As the notation suggests, $[\Hfrak] \in \HH^{1}(V)$ only depends on $\Hfrak$, and not on the choice of $\xbf$ or $\varpi$ whose zero-locus is $\Hfrak$.
The following well-known fact follows from the rationality of $\Afrak$.

\begin{fact}\label{fact:H1-residue-description}
  Let $\Hbf$ be a hyperplane arrangement in $\Afrak$ with distinct irreducible components $\Hfrak_{1},\ldots,\Hfrak_{n}$.
  Then the set $\{[\Hfrak_{1}],\ldots,[\Hfrak_{n}]\}$ forms a basis for $\HH^{1}(\Ucal_{\Hbf})$.
  Furthermore, the residue maps $\partial_{\Hfrak_{i}} : \HH^{1}(\Ucal_{\Hbf}) \to \HH^{0}(\Ucal_{\Hbf} \cap \Hfrak_{i}) = \Lambda$ associated to $\Hfrak_{i}$ satisfy $\partial_{\Hfrak_{i}}[\Hfrak_{j}] = \delta_{i,j}$ where $\delta_{i,j}$ denotes the Kronecker $\delta$-function taking values in $\Lambda$.
\end{fact}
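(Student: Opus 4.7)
The plan is to prove both claims simultaneously by induction on $n$, the number of irreducible components of $\Hbf$, using the Gysin/localization sequence in \'etale cohomology together with the fact that $\Afrak \cong \Abb^{n}_{k}$ is cohomologically trivial, i.e.~$\HH^{i}(\Afrak,\Lambda(j))=0$ for all $i>0$ by homotopy invariance.

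For the base case $n=1$, write $\Hfrak = \Hfrak_{1}$ with defining linear polynomial $\varpi$. The Gysin sequence for the codimension-one closed immersion $\Hfrak \hookrightarrow \Afrak$ yields
\[ 0 = \HH^{1}(\Afrak) \to \HH^{1}(\Ucal_{\Hbf}) \xrightarrow{\partial_{\Hfrak}} \HH^{0}(\Hfrak) \to \HH^{2}(\Afrak) = 0, \]
so $\partial_{\Hfrak}$ is an isomorphism onto $\HH^{0}(\Hfrak) = \Lambda$. The class $[\Hfrak]$ is the pullback along $\pi_{\varpi}$ of the canonical generator $1 \in \HH^{1}(\Gbb_{m})$, whose residue at $0 \in \Abb^{1}$ equals $1 \in \HH^{0}(\{0\})$; since $\pi_{\varpi}$ restricts to an isomorphism $\Afrak \to \Abb^{1}$ (making the pullback transverse to the divisor $\{0\}$), compatibility of residues with pullback gives $\partial_{\Hfrak}[\Hfrak] = 1$.

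For the inductive step, set $\Hbf' = \Hfrak_{1} \cup \cdots \cup \Hfrak_{n-1}$, $U' = \Ucal_{\Hbf'}$, and $V = \Hfrak_{n} \cap U' = \Hfrak_{n} \smin (\Hfrak_{n} \cap \Hbf')$. Since $\Hfrak_{n}$ is itself an affine space, $V$ is the complement of a hyperplane arrangement inside $\Hfrak_{n}$, hence irreducible, so $\HH^{0}(V) = \Lambda$. The Gysin sequence for $V \hookrightarrow U'$ yields
\[ 0 \to \HH^{1}(U') \to \HH^{1}(\Ucal_{\Hbf}) \xrightarrow{\partial_{\Hfrak_{n}}} \HH^{0}(V) \to \HH^{2}(U'). \]
For each $j<n$, the class $[\Hfrak_{j}]$ extends to a class on $\Afrak \smin \Hfrak_{j}$, so its residue along $\Hfrak_{n}$ vanishes; thus the inductive basis $[\Hfrak_{1}],\ldots,[\Hfrak_{n-1}]$ of $\HH^{1}(U')$ injects into $\ker(\partial_{\Hfrak_{n}})$. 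Meanwhile, exactly as in the base case, $\partial_{\Hfrak_{n}}[\Hfrak_{n}] = 1$, which simultaneously shows that $\partial_{\Hfrak_{n}}$ is surjective (so the sequence is short exact on the right) and exhibits $[\Hfrak_{n}]$ as a splitting. Hence $\HH^{1}(\Ucal_{\Hbf}) = \HH^{1}(U') \oplus \Lambda \cdot [\Hfrak_{n}]$, giving the desired basis $\{[\Hfrak_{1}],\ldots,[\Hfrak_{n}]\}$.

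The residue formula $\partial_{\Hfrak_{i}}[\Hfrak_{j}] = \delta_{i,j}$ now follows: for $i \ne j$ the class $[\Hfrak_{j}]$ extends across $\Hfrak_{i}$ (being pulled back from $\Afrak \smin \Hfrak_{j}$), so $\partial_{\Hfrak_{i}}[\Hfrak_{j}] = 0$; the diagonal case $\partial_{\Hfrak_{i}}[\Hfrak_{i}]=1$ follows by functoriality of the Gysin residue along $\pi_{\varpi_{i}}$, as in the base case. The only subtlety worth double-checking is the surjectivity of $\partial_{\Hfrak_{n}}$ in the inductive step without explicit knowledge of $\HH^{2}(U')$; but this is precisely what the computation $\partial_{\Hfrak_{n}}[\Hfrak_{n}] = 1$ accomplishes, since $\HH^{0}(V) = \Lambda$ is generated by $1$.
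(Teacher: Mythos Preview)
Your argument is correct in substance. The paper itself does not prove this fact at all; it simply records it as ``well-known'' and remarks that it ``follows from the rationality of $\Afrak$''. So you are supplying an honest proof where the paper supplies none, and your route via induction on the number of components together with the Gysin sequence is the standard one.

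One small slip: in the base case you write ``since $\pi_{\varpi}$ restricts to an isomorphism $\Afrak \to \Abb^{1}$''. This is false unless $\dim\Afrak = 1$; the map $\pi_{\varpi}$ is a smooth surjection (an affine-space bundle), not an isomorphism. What you actually need, and what your parenthetical about transversality suggests you know, is that $\pi_{\varpi}$ is smooth with $\Hfrak = \pi_{\varpi}^{-1}(0)$, so the residue map commutes with pullback along $\pi_{\varpi}$ and the computation reduces to the case of $\Gbb_{m} \subset \Abb^{1}$. With that wording corrected, the base case and the inductive step both go through, including your observation that the explicit class $[\Hfrak_{n}]$ witnesses surjectivity of $\partial_{\Hfrak_{n}}$ and splits the sequence, so that no separate control of $\HH^{2}(U')$ is required.
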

\begin{proof}
  Identify $\Afrak$ with $\Abb^{n}$, embed $\Abb^{n}$ into $\Pbb^{n}$ in the usual way, and write $\Hfrak_{\infty}$ for the hyperplane of $\Pbb^{n}$ at infinity.
  Write $\tilde\Hbf$ for the closure of $\Hbf$ in $\Pbb^{n}$ and $\tilde\Ucal_{\Hbf}$ for the complement of $\tilde\Hbf$.
  By cohomological purity, the inclusion $\Ucal_{\Hbf}\hookrightarrow \Afrak \hookrightarrow \Pbb^{n}$ induces an exact sequence of the form
  \[ 0 \to \HH^{1}(\Pbb^{n}) \to \HH^{1}(\Ucal_{\Hfrak}) \to \HH^{0}(\Hfrak_{\infty} \cap \tilde\Ucal_{\Hbf}) \oplus \bigoplus_{i} \HH^{0}(\Hfrak \cap \Ucal_{\Hbf}) \to \HH^{2}(\Pbb^{n},\Lambda(1)). \]
  Now $\HH^{2}(\Pbb^{n},\Lambda(1)) = \Lambda$, all the $\HH^{0}$ terms appearing in this sequence are also $\Lambda$, and the map on the right is simply the sum.
  Since $\HH^{1}(\Pbb^{n}) = 0$, the assertion follows.
\end{proof}

\subsection{Relations}

We work with hyperplanes inside a fixed affine space $\Afrak$ in this subsection.
Note that the intersection of two hyperplanes $\Hfrak_{1},\Hfrak_{2}$ in $\Afrak$ is either empty or has codimension two.
A pair of hyperplanes $(\Hfrak_{1},\Hfrak_{2})$ will be called a \emph{parallel pair} provided $\Hfrak_{1} \ne \Hfrak_{2}$ and $\Hfrak_{1} \cap \Hfrak_{2} = \varnothing$.
A triple of hyperplanes $(\Hfrak_{1},\Hfrak_{2},\Hfrak_{3})$ will be called a \emph{dependent triple} provided $\Hfrak_{1},\Hfrak_{2},\Hfrak_{3}$ are distinct and $\Hfrak_{1} \cap \Hfrak_{2} \cap \Hfrak_{3}$ has codimension two in $\Afrak$.

\begin{lemma}\label{lemma:cup-product-relations}
  Let $\Hbf$ be a hyperplane arrangement in $\Afrak$ whose distinct irreducible components are $\Hfrak_{1},\ldots,\Hfrak_{n}$.
  Then the following relations hold in $\HH^{2}(\Ucal_{\Hbf})$:
  \begin{enumerate}
    \item If $(\Hfrak_{i},\Hfrak_{j})$ form a parallel pair, then $[\Hfrak_{i}] \cup [\Hfrak_{j}] = 0$.
    \item If $(\Hfrak_{i},\Hfrak_{j},\Hfrak_{k})$ are a dependent triple, then one has $([\Hfrak_{i}] - [\Hfrak_{k}]) \cup ([\Hfrak_{j}] - [\Hfrak_{k}]) = 0$.
  \end{enumerate}
\end{lemma}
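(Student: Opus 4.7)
The plan is to realize each relation as the pullback of a cup product sitting in the $\HH^{2}$ of a low-dimensional variety, and then invoke the vanishing of $\HH^{2}$ on an affine curve. This is the standard mechanism by which Steinberg-type relations among Kummer classes are produced.

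\medskip

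For \textbf{(1)}, after an affine change of coordinates on $\Afrak$, I can assume $\Hfrak_{i} = \{x_{1} = 0\}$ and $\Hfrak_{j} = \{x_{1} = c\}$ for some $c \in k^{\times}$. The projection $\pi_{x_{1}} : \Afrak \to \Abb^{1}_{x_{1}}$ then restricts to a morphism $\Ucal_{\Hbf} \to \Abb^{1}_{x_{1}} \smin \{0,c\}$, and unwinding the definition of $[\Hfrak]$ (using the linear functions $x_{1}$ and $x_{1}-c$ respectively) shows that both $[\Hfrak_{i}]$ and $[\Hfrak_{j}]$ lie in the image of the pullback $\pi_{x_{1}}^{*} : \HH^{1}(\Abb^{1}_{x_{1}} \smin \{0,c\}) \to \HH^{1}(\Ucal_{\Hbf})$. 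Their cup product therefore factors through $\HH^{2}(\Abb^{1} \smin \{0,c\}) = 0$, since this is a smooth affine curve.

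\medskip

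For \textbf{(2)}, set $W := \Hfrak_{i} \cap \Hfrak_{j} \cap \Hfrak_{k}$, which is codimension two in $\Afrak$ by hypothesis. After an affine change of coordinates, I may assume $\Hfrak_{i} = \{x_{1} = 0\}$, $\Hfrak_{j} = \{x_{2} = 0\}$ and $\Hfrak_{k} = \{a x_{1} + b x_{2} = 0\}$ with $a, b \in k^{\times}$ (the scalars are nonzero because the three hyperplanes are distinct). The rational map $[x_{1} : x_{2}] : \Afrak \dashrightarrow \Pbb^{1}$ is regular away from $W$; since $\Ucal_{\Hbf}$ is disjoint from $\Hfrak_{i}$, hence from $W$, it restricts to an honest morphism
\[
\pi : \Ucal_{\Hbf} \ \longrightarrow \ \Pbb^{1} \smin \{0, \infty, -a/b\}.
\]
Writing $u$ for the standard affine coordinate $x_{2}/x_{1}$ on the target, the rational functions $u$ and $u + a/b$ are both invertible on this three-punctured $\Pbb^{1}$, and a direct Kummer-class computation — using that constants in $k^{\times}$ have trivial Kummer class since $k$ is algebraically closed — yields $\pi^{*}[u] = [\Hfrak_{j}] - [\Hfrak_{i}]$ and $\pi^{*}[u + a/b] = [\Hfrak_{k}] - [\Hfrak_{i}]$. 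Taking $\Lambda$-linear combinations, both $[\Hfrak_{i}] - [\Hfrak_{k}]$ and $[\Hfrak_{j}] - [\Hfrak_{k}]$ lie in the image of $\pi^{*}$, so their cup product factors through $\HH^{2}(\Pbb^{1} \smin \{0, \infty, -a/b\}) = 0$.

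\medskip

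There is no real obstacle in this argument; it is essentially a bookkeeping exercise showing that the Kummer classes of the relevant linear forms factor through a well-chosen projection to a curve. The only mildly delicate point is verifying that the pullback computation is consistent across the various choices of defining linear functions, which is precisely the content of the well-definedness of $[\Hfrak]$ recorded in the preceding fact.
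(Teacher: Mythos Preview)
Your proof is correct and follows essentially the same approach as the paper: in both cases one projects $\Ucal_{\Hbf}$ to a three-punctured $\Pbb^{1}$ so that the relevant Kummer classes (or their differences) are pulled back from the curve, and then invokes the vanishing of $\HH^{2}$ there. The only cosmetic difference is that the paper treats (2) first and then handles (1) by compactifying $\Afrak$ inside $\Pbb^{n}$ and viewing the hyperplane at infinity as the third member of a dependent triple, whereas you argue (1) directly via the affine projection $\pi_{x_{1}}$; unwinding, these are the same map to the same curve.
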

\begin{proof}
  To obtain (2), simply note that whenever $(\Hfrak_{i},\Hfrak_{j},\Hfrak_{k})$ is a dependent triple and $\Hbf'$ is the hyperplane arrangement obtained from $\Hbf$ as the closure in $\Afrak$ of
  \[ \Hbf \smin \Hfrak_{i} \cup \Hfrak_{j} \cup \Hfrak_{k}, \]
  then there exists a linear morphism $f : \Ucal_{\Hbf'} \to \Pbb^{1}$ such that $\Hfrak_{i}$ is the fiber above $0$, $\Hfrak_{j}$ is the fiber above $1$ and $\Hfrak_{k}$ is the fiber above $\infty$.
  We thus obtain a restricted linear morphism $f : \Ucal_{\Hbf} \to \Pbb^{1} \smin \{0,1,\infty\} = \Abb^{1} \smin \{0,1\}$.
  The pullbacks with respect to $f$ of the Kummer classes $[0],[1] \in \HH^{1}(\Abb^{1} \smin \{0,1\})$ satisfy:
  \[ f^{*}[0] = [\Hfrak_{i}] - [\Hfrak_{k}], \ f^{*}[1] = [\Hfrak_{j}] - [\Hfrak_{k}]. \]
  Assertion (2) follows since $\HH^{2}(\Pbb^{1} \smin \{0,1,\infty\})$ vanishes.

  Assertion (1) is obtained similarly by identifying $\Afrak$ with $\Abb^{n}$ via some choice of coordinates so that $\Ucal_{\Hbf}$ can be identified as the complement of $\Pbb^{n}$ of $\Hbf \cup \Hfrak_{\infty}$ where $\Hfrak_{\infty}$ is the (projective) hyperplane at infinity.
  The argument for case (2) above goes through, \textit{mutatis mutandis}, with $\Hfrak_{\infty}$ in place of $\Hfrak_{k}$.
\end{proof}

\begin{remark}
  The relations appearing in Lemma~\ref{lemma:cup-product-relations} are well-known and are used in defining \emph{Orlik-Solomon} algebras associated to hyperplane arrangements.
\end{remark}

\subsection{The universal case}\label{subsec:univ-case-cohom}

We now pass to colimits over certain Zariski open subsets.
If $\Afrak$ is any affine space, we define
\[ \HHbf^{*}_{\Afrak} := \varinjlim_{\Ucal_{\Hbf}} \HH^{*}(\Ucal_{\Hbf})\]
where $\Ucal_{\Hbf}$ varies over $\Hscr_{k_{0},\Afrak}$.
If $\xbf := (x_{1},\ldots,x_{n})$ is a system of coordinate on $\Afrak$, we will also write $\HHbf^{*}_{\xbf} := \Hbf^{*}_{\Afrak}$.
In the case where $\xbf = (\varpi)$ is a singleton, we write $\HHbf^{*}_{\varpi} := \HHbf^{*}_{\xbf}$.

We have an obvious notion of Kummer classes $[\Hfrak] \in \HHbf^{1}_{\Afrak}$ associated to hyperplanes $\Hfrak$ of $\Afrak$, and the discussion of \S\ref{subsec:kummer-hyperplanes} shows that these Kummer classes form a basis for $\HHbf^{1}_{\Afrak}$.
For $\Hbf$ as above, the canonical morphism $\HH^{1}(\Ucal_{\Hbf}) \to \HHbf^{1}_{\Afrak}$ is \emph{injective} and its image is generated by $[\Hfrak]$ for $\Hfrak$ varying over the irreducible components of $\Hbf$.
The following relations involving Kummer classes of hyperplanes in $\Afrak$ hold true in $\HHbf^{2}_{\Afrak}$ by Lemma~\ref{lemma:cup-product-relations}:
\begin{enumerate}
  \item $[\Hfrak_{1}] \cup [\Hfrak_{2}] = 0$ for parallel pairs $(\Hfrak_{1},\Hfrak_{2})$.
  \item $([\Hfrak_{1}] - [\Hfrak_{3}]) \cup ([\Hfrak_{2}] - [\Hfrak_{3}]) = 0$ for dependent triples $(\Hfrak_{1},\Hfrak_{2},\Hfrak_{3})$.
\end{enumerate}

\subsection{Linear projections}

Let $\Afrak$ be an affine space and $\zbf = (z_{1},\ldots,z_{m})$ a partial system of coordinates on $\Afrak$.
Consider the induced morphism $\pi_{\zbf} : \Afrak \to \Abb^{m}_{\zbf}$.
Restricting to the appropriate open subsets, and passing to cohomology and the colimit, we obtain a canonical morphism
\[ \iotabf_{\zbf} : \HHbf^{*}_{\zbf} \to \HHbf^{*}_{\Afrak}. \]
If $\Hfrak$ is a hyperplane in $\Abb^{m}_{\zbf}$, then $\pi_{\zbf}^{-1}\Hfrak$ is a hyperplane in $\Afrak$ and their Kummer classes are compatible in the sense that one has $\iotabf_{\zbf}[\Hfrak] = [\pi_{\zbf}^{-1}\Hfrak]$.
It follows easily from this that $\iotabf_{\zbf}$ is injective and its image is generated by Kummer classes of the form $[\pi_{\zbf}^{-1}\Hfrak]$ for $\Hfrak$ varying over the hyperplanes in $\Abb^{m}_{\zbf}$.

\subsection{Residue maps}\label{subsec:univ-residue}

Let $\Afrak$ be an affine space and $\Hfrak \subset \Afrak$ a hyperplane.
We have a canonical residue map
\[ \partial_{\Hfrak} : \HHbf^{i+1}_{\Afrak} \to \HHbf^{i}_{\Hfrak} \]
obtained from the usual residue maps associated to $\Hfrak$ by passing to the colimit.
These residue maps can be calculated using the following formulas:
\begin{enumerate}
  \item One has $\partial_{\Hfrak}[\Hfrak] = 1$.
  \item One has $\partial_{\Hfrak}[\Hfrak'] = 0$ for $\Hfrak'$ a hyperplane distinct from $\Hfrak$.
  \item If $\Hfrak'$ is a hyperplane and $\Hfrak \cap \Hfrak'$ is a hyperplane in $\Hfrak$, then $\partial_{\Hfrak}([\Hfrak] \cup [\Hfrak']) = [\Hfrak \cap \Hfrak']$.
\end{enumerate}
We write
\[ \Ubf_{\Hfrak} := \ker(\partial_{\Hfrak} : \HHbf^{1}_{\Afrak} \to \HHbf^{0}_{\Hfrak}). \]
We have a canonical \emph{specialization map} $\sfrak_{\Hfrak} : \Ubf_{\Hfrak} \to \HHbf^{1}_{\Hfrak}$ defined on the level of cohomology by restriction.
Explicitly, this specialization map satisfies $\sfrak_{\Hfrak}[\Hfrak'] = \partial_{\Hfrak}([\Hfrak] \cup [\Hfrak'])$.
We define
\[ \Ubf^{1}_{\Hfrak} := \ker(\sfrak_{\Hfrak} : \Ubf_{\Hfrak} \to \HHbf^{1}_{\Hfrak}). \]
Clearly, $\sfrak_{\Hfrak}$ is surjective and thus it induces an isomorphism $\Ubf_{\Hfrak}/\Ubf_{\Hfrak}^{1} \cong \HHbf^{1}_{\Hfrak}$.
The submodules $\Ubf^{1}_{\Hfrak} \subset \Ubf_{\Hfrak}$ can be described explicitly using the description of $\partial_{\Hfrak}$ above, as follows.
\begin{fact}\label{fact:Ubf-description}
  The following hold:
  \begin{enumerate}
    \item $\Ubf_{\Hfrak}$ is generated by Kummer classes of the form $[\Hfrak_{1}]$ for hyperplanes $\Hfrak_{1} \neq \Hfrak$.
    \item $\Ubf^{1}_{\Hfrak}$ is generated by the following:
          \begin{enumerate}
            \item Kummer classes $[\Hfrak_{1}]$ where $(\Hfrak,\Hfrak_{1})$ is a parallel pair.
            \item Differences $[\Hfrak_1] - [\Hfrak_2]$ where $(\Hfrak,\Hfrak_1,\Hfrak_2)$ is a dependent triple.
          \end{enumerate}
  \end{enumerate}
\end{fact}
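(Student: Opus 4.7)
The plan is to prove~(1) immediately from the Kummer basis of $\HHbf^{1}_{\Afrak}$, and to prove~(2) in two steps: first verify that the classes listed in~(a) and~(b) lie in $\Ubf^{1}_{\Hfrak}$, and then show they span by a direct analysis of $\sfrak_{\Hfrak}$ on the Kummer basis of $\Ubf_{\Hfrak}$.

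For~(1), recall that $\{[\Hfrak']\}_{\Hfrak' \in \Planes_{\Afrak}}$ is a $\Lambda$-basis of $\HHbf^{1}_{\Afrak}$ (obtained by passing the Fact of \S\ref{subsec:kummer-hyperplanes} to the colimit), and that $\partial_{\Hfrak}$ acts on this basis by $[\Hfrak] \mapsto 1$ and $[\Hfrak'] \mapsto 0$ for $\Hfrak' \neq \Hfrak$ by rules~(1) and~(2) of \S\ref{subsec:univ-residue}. Hence $\Ubf_{\Hfrak} = \ker \partial_{\Hfrak}$ is in fact freely generated by $\{[\Hfrak'] : \Hfrak' \neq \Hfrak\}$, proving~(1).

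For the containment part of~(2), a class $[\Hfrak_{1}]$ in~(a) lies in $\Ubf^{1}_{\Hfrak}$ because for a parallel pair Lemma~\ref{lemma:cup-product-relations}(1) gives $[\Hfrak] \cup [\Hfrak_{1}] = 0$ in $\HHbf^{2}_{\Afrak}$, so $\sfrak_{\Hfrak}[\Hfrak_{1}] = \partial_{\Hfrak}([\Hfrak] \cup [\Hfrak_{1}]) = 0$. For~(b), a dependent triple $(\Hfrak, \Hfrak_{1}, \Hfrak_{2})$ forces the three codimension-two affine subspaces $\Hfrak \cap \Hfrak_{1}$, $\Hfrak \cap \Hfrak_{2}$, and $\Hfrak \cap \Hfrak_{1} \cap \Hfrak_{2}$ all to coincide, as each is an irreducible linear subvariety of codimension two containing the last. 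Rule~(3) of \S\ref{subsec:univ-residue} then yields $\sfrak_{\Hfrak}[\Hfrak_{1}] = [\Hfrak \cap \Hfrak_{1}] = [\Hfrak \cap \Hfrak_{2}] = \sfrak_{\Hfrak}[\Hfrak_{2}]$, whence $\sfrak_{\Hfrak}([\Hfrak_{1}] - [\Hfrak_{2}]) = 0$.

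To argue the generation direction, I would use the basis from~(1) to describe $\sfrak_{\Hfrak} : \Ubf_{\Hfrak} \to \HHbf^{1}_{\Hfrak}$ entirely on the level of bases: a basis vector $[\Hfrak_{1}]$ is sent to $0$ when $(\Hfrak, \Hfrak_{1})$ is a parallel pair, and otherwise to the basis class $[\Hfrak \cap \Hfrak_{1}]$ of $\HHbf^{1}_{\Hfrak}$. The basis $\{[\Hfrak_{1}] : \Hfrak_{1} \neq \Hfrak\}$ thus partitions into the parallel piece (annihilated by $\sfrak_{\Hfrak}$) and, for each hyperplane $G$ of $\Hfrak$, the nonempty pencil of $\Hfrak_{1} \neq \Hfrak$ containing $G$ (all sent to the single basis vector $[G]$). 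In this free-module picture, $\sfrak_{\Hfrak}$ is a block projection onto $\HHbf^{1}_{\Hfrak}$, and its kernel is visibly generated by the basis vectors sent to $0$---yielding exactly the classes in~(a)---together with differences of basis vectors sent to the same target---yielding the classes in~(b). I do not anticipate a serious obstacle in this argument; the only point requiring any care is the linear-algebra observation that the pairwise intersections of a dependent triple with a distinguished member all coincide, which was used for the containment step in~(b).
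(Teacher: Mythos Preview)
Your argument is correct. The paper itself offers no proof of this Fact --- it is simply asserted and used --- so there is no proof to compare against; your write-up supplies exactly the routine verification the paper elides, using the Kummer basis and the explicit formulas for $\partial_{\Hfrak}$ and $\sfrak_{\Hfrak}$ from \S\ref{subsec:univ-residue}. The one point you might state more explicitly in the generation step is the converse to your containment observation for~(b): if $\Hfrak_{1},\Hfrak_{2}\neq\Hfrak$ are non-parallel to $\Hfrak$ and satisfy $\Hfrak\cap\Hfrak_{1}=\Hfrak\cap\Hfrak_{2}$, then this common codimension-two subspace is contained in $\Hfrak\cap\Hfrak_{1}\cap\Hfrak_{2}$, forcing the latter to have codimension two and hence making $(\Hfrak,\Hfrak_{1},\Hfrak_{2})$ a dependent triple; this is what guarantees that every difference arising in your block-projection description of $\ker\sfrak_{\Hfrak}$ really is of type~(b).
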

\begin{proof}
  These properties follow from the explicit description of $\HH^{1}$ in terms of residue maps appearing in Fact~\ref{fact:H1-residue-description}.
\end{proof}

\section{Fundamental groups}\label{section:fund-groups}

For a variety $X$ let $\Pi_{X}$, $\Pi_{X}^{(2)}$, $\Pi_{X}^{(3)}$, $\Pi_{X}^{a}$ and $\Pi_{X}^{c}$ be as defined in \S\ref{subsec:intro-fund-groups}.
We also put $\Pi_{X}^{\delta} := \Pi_{X}^{(2)}/\Pi_{X}^{(3)}$.
These groups fit in a central extension of the form
\[ 1 \to  \Pi_{X}^{\delta} \to \Pi_{X}^{c} \to \Pi_{X}^{a} \to 1, \]
with both $\Pi_{X}^{a}$ and $\Pi_{X}^{\delta}$ being $\Lambda$-modules.
The commutator in $\Pi_{X}^{c}$ induces a $\Lambda$-bilinear map
\[ [-,-] : \wedge^{2}\Pi_{X}^{a} \to \Pi_{X}^{\delta} \]
defined as $[\sigma,\tau] := \tilde\sigma \cdot \tilde\tau \cdot \tilde\sigma^{-1} \cdot \tilde\tau^{-1}$, where $\tilde\sigma,\tilde\tau \in \Pi_{X}^{c}$ denote lifts of $\sigma,\tau \in \Pi_{X}^{a}$.

\subsection{Duality}\label{subsec:duality}

Let $X$ be a smooth quasi-projective variety which is isomorphic to the complement of finitely many hyperplanes in an affine space.
There exists a well-known duality between the 2-truncation of $\HH^{*}(X)$ and $\Pi_{X}^{c}$, which we now summarize.
Write $\Rcal(X)$ for the kernel of the cup-product map $\wedge^{2} \HH^{1}(X) \to \HH^{2}(X)$.
We have a perfect pairing of the form
\begin{equation}\label{eqn:H1-pairing}
  \Pi^{a}_{X} \times \HH^{1}(X) \to \Lambda.
\end{equation}
Indeed, in the case where $\Lambda$ is finite this is a consequence of Pontryagin duality, while the case $\Lambda = \Zbb_{\ell}$ follows by passing to the limit over the finite quotients of $\Zbb_{\ell}$ and using the explicit description of $\HH^{1}(X)$ from \S\ref{subsec:kummer-hyperplanes}.
Note that both $\HH^{1}(X)$ and $\Pi^{a}_{X}$ are finitely-generated free $\Lambda$-modules of the same rank.
The dual with respect to~\eqref{eqn:H1-pairing} of the inclusion $\Rcal(X) \hookrightarrow \wedge^{2} \HH^{1}(X)$ factors through the commutator $[-,-] : \wedge^{2}\Pi_{X}^{a} \to \Pi_{X}^{\delta}$.
Thus we have a natural pairing
\begin{equation}\label{eqn:H2-pairing}
  \Pi^{\delta}_{X} \times \Rcal(X) \to \Lambda
\end{equation}
Which is compatible with~\eqref{eqn:H1-pairing} as described above.

\subsection{The universal case}\label{subsec:univ-duality}

We now pass to limits analogously to the discussion from \S\ref{subsec:univ-case-cohom}.
Let $\Afrak$ be an affine space.
We define
\[ \Pibf^{\star}_{\Afrak} := \varprojlim_{\Ucal_{\Hbf}} \Pi^{\star}_{\Ucal_{\Hbf}}, \ \star \in \{a,c\}, \ \Pibf^{\delta}_{\Afrak} := \ker(\Pibf^{c}_{\Afrak} \to \Pibf^{a}_{\Afrak}), \]
where $\Ucal_{\Hbf}$ varies over $\Hscr_{k_{0},\Afrak}$.
The pairing~\eqref{eqn:H1-pairing} extends to a perfect pairing
\begin{equation}\label{eqn:univ-H1-pairing}
  \Pibf^{a}_{\Afrak} \times \HHbf^{1}_{\Afrak} \to \Lambda.
\end{equation}
If $\xbf = (x_{1},\ldots,x_{n})$ is a system of coordinates on $\Afrak$, we may write $\Pibf^{\star}_{\xbf} := \Pibf^{\star}_{\Afrak}$, $\star \in \{a,c,\delta\}$, and if $\xbf = (\varpi)$ is a singleton, we write $\Pibf^{\star}_{\varpi} := \Pibf^{\star}_{\xbf}$, $\star \in \{a,c,\delta\}$.
Given an element $\alpha \in \HHbf^{1}_{\Afrak}$ and $\sigma \in \Pibf^{a}_{\Afrak}$, we write $\sigma\alpha \in \Lambda$ for the image of $(\sigma,\alpha)$ under the pairing~\eqref{eqn:univ-H1-pairing}.

The commutator $[-,-] : \hat\wedge^{2} \Pibf^{a}_{\Afrak} \to \Pibf^{\delta}_{\Afrak}$ is not injective in general.
Describing its kernel boils down to dualizing the cup-product $\wedge^{2}\HHbf^{1}_{\Afrak} \to \HHbf^{2}_{\Afrak}$ and using the relations among cup-products of Kummer classes discussed in \S\ref{subsec:univ-case-cohom}.
We will only need the following properties.

\begin{lemma}\label{lemma:alt-pairs}
  Let $\sigma,\tau \in \Pibf^{a}_{\Afrak}$ be given and assume that $[\sigma,\tau] = 0$ in $\Pibf^{\delta}_{\Afrak}$.
  \begin{enumerate}
    \item For all parallel pairs $(\Hfrak_{1},\Hfrak_{2})$, one has
          \[ \sigma[\Hfrak_{1}] \cdot \tau[\Hfrak_{2}] = \sigma[\Hfrak_{2}] \cdot \tau[\Hfrak_{1}]. \]
    \item For all dependent triples $(\Hfrak_{1},\Hfrak_{2},\Hfrak_{3})$, one has
          \[ (\sigma[\Hfrak_{1}] - \sigma[\Hfrak_{3}]) \cdot (\tau[\Hfrak_{2}] - \tau[\Hfrak_{3}]) = (\sigma[\Hfrak_{2}] - \sigma[\Hfrak_{3}]) \cdot (\tau[\Hfrak_{1}] - \tau[\Hfrak_{3}]). \]
  \end{enumerate}
\end{lemma}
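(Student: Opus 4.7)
The plan is to exploit the duality set up in \S\ref{subsec:univ-duality}: by construction, the commutator map $[-,-] : \hat\wedge^{2} \Pibf^{a}_{\Afrak} \to \Pibf^{\delta}_{\Afrak}$ is dual to the inclusion $\ker\bigl(\cup : \hat\wedge^{2} \HHbf^{1}_{\Afrak} \to \HHbf^{2}_{\Afrak}\bigr) \hookrightarrow \hat\wedge^{2} \HHbf^{1}_{\Afrak}$ with respect to the alternating-square pairing induced by~\eqref{eqn:univ-H1-pairing}. Explicitly, on decomposable tensors this induced pairing reads
\[ (\sigma \wedge \tau,\; \alpha \wedge \beta) \;=\; \sigma\alpha \cdot \tau\beta \;-\; \sigma\beta \cdot \tau\alpha. \]
Hence the hypothesis $[\sigma, \tau] = 0$ in $\Pibf^{\delta}_{\Afrak}$ is equivalent to the statement that $\sigma \wedge \tau$ annihilates every element of the cup-product kernel.

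With this reformulation in hand, both (1) and (2) follow by applying the annihilation to the two universal relations recorded in \S\ref{subsec:univ-case-cohom} (equivalently, Lemma~\ref{lemma:cup-product-relations}). For (1), a parallel pair $(\Hfrak_{1},\Hfrak_{2})$ gives $[\Hfrak_{1}] \cup [\Hfrak_{2}] = 0$, so $[\Hfrak_{1}] \wedge [\Hfrak_{2}]$ lies in the cup-product kernel and the pairing identity returns $\sigma[\Hfrak_{1}] \cdot \tau[\Hfrak_{2}] = \sigma[\Hfrak_{2}] \cdot \tau[\Hfrak_{1}]$ immediately. For (2), I would run the same argument with $([\Hfrak_{1}] - [\Hfrak_{3}]) \wedge ([\Hfrak_{2}] - [\Hfrak_{3}])$, which lies in the kernel for any dependent triple, and then expand the pairing bilinearly; the claimed identity drops out.

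I do not anticipate a serious obstacle: the lemma is essentially the pointwise dual of Lemma~\ref{lemma:cup-product-relations}, and the only genuinely required input --- the compatibility of the commutator/cup-product duality in the universal (inverse/direct limit) setting --- has already been arranged in \S\ref{subsec:univ-duality}. The one ingredient worth checking carefully is that the perfect pairing~\eqref{eqn:univ-H1-pairing} does indeed extend to the stated pairing on $\hat\wedge^{2}$'s after passing to the limit, so that the forward implication ``$[\sigma, \tau] = 0 \Rightarrow \sigma \wedge \tau$ annihilates $\ker(\cup)$'' is legitimate; once this is in place, the remainder is routine bilinear bookkeeping.
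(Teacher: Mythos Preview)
Your proposal is correct and follows essentially the same approach as the paper: the paper's proof is a one-line reference to ``the compatibility of the pairings~\eqref{eqn:H1-pairing} and~\eqref{eqn:H2-pairing}, and Lemma~\ref{lemma:cup-product-relations},'' and your write-up simply unpacks that compatibility explicitly via the induced pairing on $\wedge^{2}$ and then evaluates it on the two relations from Lemma~\ref{lemma:cup-product-relations}. The only cosmetic point is that on the cohomology side one uses the ordinary $\wedge^{2}\HHbf^{1}_{\Afrak}$ (a colimit of finite-level objects) rather than a completed wedge; the completion is only needed on the $\Pibf^{a}$ side.
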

\begin{proof}
  This follows from the compatibility of the pairings~\eqref{eqn:H1-pairing} and~\eqref{eqn:H2-pairing}, and Lemma~\ref{lemma:cup-product-relations}.
  See the discussion of \S\ref{subsec:univ-case-cohom}.
\end{proof}

\subsection{Linear projections}
Let $\zbf = (z_{1},\ldots,z_{m})$ be a partial system of coordinates in an affine space $\Afrak$.
Consider the morphism $\pi_{\zbf} : \Afrak \to \Abb^{m}_{\zbf}$.
By restricting to the appropriate open subsets, applying the functor $X \mapsto \Pi_{X}^{\star}$, $\star \in \{a,c\}$, and passing to the limit, we obtain canonical morphisms
\[ \pibf_{\zbf} : \Pibf^{\star}_{\Afrak} \to \Pibf^{\star}_{\zbf}. \]
The map $\pibf_{\zbf} : \Pibf^{a}_{\Afrak} \to \Pibf^{a}_{\zbf}$ is easily seen to be dual to $\iotabf_{\zbf} : \HHbf^{1}_{\zbf} \to \HHbf^{1}_{\Afrak}$ with respect to~\eqref{eqn:univ-H1-pairing}.

\section{The local theory}

The primary goal of this section is to provide a characterization of hyperplanes in an affine space $\Afrak$ using decomposition theory within $\Pibf^{a}_{\Afrak}$.

\subsection{Inertia and decomposition groups}\label{subsec:inertia-decomp}

Let $\Hfrak$ be a hyperplane in an affine space $\Afrak$ and recall that we have introduced two submodules $\Ubf_{\Hfrak}^{1} \subset \Ubf_{\Hfrak} \subset \HHbf^{1}_{\Afrak}$ in \S\ref{subsec:univ-residue}.
We define $\Ibf_{\Hfrak}$ resp.~$\Dbf_{\Hfrak}$ as the orthogonal of $\Ubf_{\Hfrak}$ resp.~$\Ubf_{\Hfrak}^{1}$ with respect to the pairing~\eqref{eqn:univ-H1-pairing}.
Note that $\Ibf_{\Hfrak} \subset \Dbf_{\Hfrak}$, and the quotient $\Dbf_{\Hfrak}/\Ibf_{\Hfrak}$ is canonically isomorphic to $\Pibf^{a}_{\Hfrak}$ because of the specialization isomorphism $\Ubf_{\Hfrak}/\Ubf_{\Hfrak}^{1} \cong \HHbf^{1}_{\Hfrak}$ and the duality described in~\eqref{eqn:univ-H1-pairing}.
Note that $\Ibf_{\Hfrak} \cong \Lambda$ since $\HHbf^{1}_{\Afrak}/\Ubf_{\Hfrak} \cong \Lambda$.

Explicitly, $\Ibf_{\Hfrak}$ is the space of all $\sigma \in \Pibf^{a}_{\Afrak}$ such that $\sigma[\Hfrak_1] = 0$ for all hyperplanes $\Hfrak_1$ distinct from $\Hfrak$.
Similarly $\Dbf_{\Hfrak}$ is the space of all $\sigma \in \Pibf^{a}_{\Afrak}$ such that for all parallel pairs of the form $(\Hfrak,\Hfrak_1)$, one has $\sigma[\Hfrak_1] = 0$, and for all dependent triples of the form $(\Hfrak,\Hfrak_1,\Hfrak_2)$, one has $\sigma[\Hfrak_1] = \sigma[\Hfrak_2]$.
See Fact~\ref{fact:Ubf-description}.

We call $\Ibf_{\Hfrak}$ resp.~$\Dbf_{\Hfrak}$ the \emph{inertia} resp.~\emph{decomposition groups} associated to $\Hfrak$.
These subgroups of $\Pibf^{a}_{\Afrak}$ agree with the usual inertia resp.~decomposition groups associated to the prime Weil divisor $\Hfrak$.
The following is a well-known property of inertia/decomposition groups of prime divisors.
\begin{fact}\label{fact:decomp-alt-pair}
  Let $\Hfrak$ be a hyperplane in an affine space $\Afrak$, and let $\sigma \in \Ibf_{\Hfrak}$ and $\tau \in \Dbf_{\Hfrak}$ be given.
  Then $[\sigma,\tau] = 0$.
\end{fact}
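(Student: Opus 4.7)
The strategy is to use the cohomological duality of \S3.1--\S3.4 to translate $[\sigma,\tau]=0$ into the vanishing of an explicit bilinear expression on pairs of elements of $\HHbf^1_\Afrak$, and then reduce to a short case analysis against the generators of the cup-product relations identified in \S4.

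\textbf{Step 1: Translate to a pairing condition.} The commutator map $[-,-]:\hat{\wedge}^{2}\Pibf^{a}_{\Afrak}\to\Pibf^{\delta}_{\Afrak}$ is, by the description in \S3.1 extended to the universal setting of \S3.4, dual to the inclusion
\[ \Rcalbf_{\Afrak}:=\ker\!\Bigl(\wedge^{2}\HHbf^{1}_{\Afrak}\xrightarrow{\ \cup\ }\HHbf^{2}_{\Afrak}\Bigr)\ \hookrightarrow\ \wedge^{2}\HHbf^{1}_{\Afrak}. \]
Unwinding, $[\sigma,\tau]=0$ in $\Pibf^{\delta}_{\Afrak}$ is equivalent to the identity $\sigma\alpha\cdot\tau\beta=\sigma\beta\cdot\tau\alpha$ for every element $\alpha\wedge\beta$ of any generating family of $\Rcalbf_{\Afrak}$. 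This is precisely the converse direction of Lemma~\ref{lemma:alt-pairs}, so the content of Fact~\ref{fact:decomp-alt-pair} is that the explicit relations of Lemma~\ref{lemma:cup-product-relations} already generate $\Rcalbf_{\Afrak}$ and that no other relations need to be tested.

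\textbf{Step 2: Generators of $\Rcalbf_{\Afrak}$.} From \S\ref{subsec:univ-case-cohom} (via Orlik--Solomon for cohomology of hyperplane-arrangement complements in an affine space, passed through the filtered colimit defining $\HHbf^{*}_{\Afrak}$), $\Rcalbf_{\Afrak}$ is generated by the two families of Lemma~\ref{lemma:cup-product-relations}:
\begin{enumerate}
  \item $[\Hfrak_{1}]\wedge[\Hfrak_{2}]$ for parallel pairs $(\Hfrak_{1},\Hfrak_{2})$;
  \item $([\Hfrak_{1}]-[\Hfrak_{3}])\wedge([\Hfrak_{2}]-[\Hfrak_{3}])$ for dependent triples $(\Hfrak_{1},\Hfrak_{2},\Hfrak_{3})$.
\end{enumerate}
So it suffices to verify the identity of Step~1 on each such generator.

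\textbf{Step 3: Case analysis.} Recall (Fact~\ref{fact:Ubf-description} and \S\ref{subsec:inertia-decomp}) that $\sigma\in\Ibf_{\Hfrak}$ means $\sigma[\Hfrak']=0$ for every hyperplane $\Hfrak'\ne\Hfrak$, while $\tau\in\Dbf_{\Hfrak}$ means $\tau[\Hfrak_{1}]=0$ whenever $(\Hfrak,\Hfrak_{1})$ is a parallel pair and $\tau[\Hfrak_{1}]=\tau[\Hfrak_{2}]$ whenever $(\Hfrak,\Hfrak_{1},\Hfrak_{2})$ is a dependent triple. For a generator involving only hyperplanes distinct from $\Hfrak$, both terms in $\sigma\alpha\cdot\tau\beta-\sigma\beta\cdot\tau\alpha$ vanish because $\sigma$ annihilates each relevant Kummer class. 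If $\Hfrak$ does appear, then in case (1) the surviving Kummer class is parallel to $\Hfrak$, so $\tau$-values vanish and the identity again reduces to $0=0$; in case (2), the condition on $\tau$ forces $\tau\alpha=\tau\beta$ (when $\Hfrak=\Hfrak_{3}$) or $\tau\beta=0$ (when $\Hfrak\in\{\Hfrak_{1},\Hfrak_{2}\}$, using the symmetry of the relation in swapping the first two indices), and in each situation $\sigma\alpha\cdot\tau\beta$ and $\sigma\beta\cdot\tau\alpha$ coincide.

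\textbf{Main obstacle.} The case analysis itself is mechanical; the substantive point is Step~2, namely the claim that $\Rcalbf_{\Afrak}$ is generated by the relations of Lemma~\ref{lemma:cup-product-relations} in the \emph{universal} (colimit) setting. This requires knowing that Orlik--Solomon's presentation of $\HH^{*}(\Ucal_{\Hbf})$ is compatible with the transition morphisms $\Ucal_{\Hbf_{1}}\hookrightarrow\Ucal_{\Hbf_{2}}$ (inclusion of generators and relations), so that $\Rcalbf_{\Afrak}$ is the colimit of the finite-level kernels and is therefore generated by the parallel-pair and dependent-triple relations declared in \S\ref{subsec:univ-case-cohom}.
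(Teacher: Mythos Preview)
Your argument is correct, and it takes a genuinely different route from the paper. The paper does not prove Fact~\ref{fact:decomp-alt-pair} at all: it simply records it as ``a well-known property of inertia/decomposition groups of prime divisors,'' relying implicitly on classical ramification theory (the divisorial valuation attached to $\Hfrak$ has inertia normal in decomposition, hence their images commute in any two-step nilpotent quotient). Your approach, by contrast, stays entirely inside the paper's cohomological framework: you use the injectivity of $\Pibf^{\delta}_{\Afrak}\hookrightarrow\Hom(\Rcalbf_{\Afrak},\Lambda)$ (which does follow from the definition of $\Gcal^{(3)}$ in \S\ref{section:fund-groups} as a left kernel, though the paper never states this explicitly), together with the Orlik--Solomon presentation to identify the degree-two relations as exactly the parallel-pair and dependent-triple families, and then you verify the pairing identity case by case. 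The case analysis in Step~3 is routine and checks out.

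The trade-off is this: the paper's one-line appeal is conceptual and applies verbatim to any prime divisor, not just hyperplanes, but imports valuation-theoretic machinery external to the paper. Your argument is self-contained modulo Orlik--Solomon, and in fact establishes the stronger ``converse to Lemma~\ref{lemma:alt-pairs}'' that you identify; its cost is the dependence on the Orlik--Solomon presentation for affine arrangements with $\Lambda$-coefficients, which the paper uses tacitly (e.g.\ in the Fact of \S\ref{subsec:kummer-hyperplanes}) but never states in the form you need. Your ``main obstacle'' paragraph correctly isolates this point and the compatibility with the filtered colimit is indeed straightforward.
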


We will also need a converse to this assertion.
\begin{lemma}\label{lemma:decomp-alt-pair-converse}
  Let $\Hfrak$ be a hyperplane in an affine space $\Afrak$, and $\sigma$ a generator of $\Ibf_{\Hfrak}$.
  Suppose $\tau \in \Pibf^{a}_{\Afrak}$ satisfies $[\sigma,\tau] = 0$.
  Then $\tau \in \Dbf_{\Hfrak}$.
\end{lemma}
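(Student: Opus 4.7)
The plan is to carry out a direct computation using Lemma~\ref{lemma:alt-pairs}, applied to the hypothesis $[\sigma,\tau]=0$, after unpacking what membership in $\Dbf_{\Hfrak}$ means concretely via Fact~\ref{fact:Ubf-description}.

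First, I would reduce the conclusion to two explicit conditions on $\tau$. By definition $\Dbf_{\Hfrak}$ is the orthogonal of $\Ubf_{\Hfrak}^{1}$ with respect to the pairing~\eqref{eqn:univ-H1-pairing}, and Fact~\ref{fact:Ubf-description} describes a set of generators of $\Ubf_{\Hfrak}^{1}$. So it suffices to verify:
\begin{enumerate}
  \item[(a)] $\tau[\Hfrak_{1}]=0$ for every parallel pair $(\Hfrak,\Hfrak_{1})$;
  \item[(b)] $\tau[\Hfrak_{1}]=\tau[\Hfrak_{2}]$ for every dependent triple $(\Hfrak,\Hfrak_{1},\Hfrak_{2})$.
\end{enumerate}

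The key preparatory observation is that $\sigma[\Hfrak] \in \Lambda$ is a unit. Indeed, the perfect pairing~\eqref{eqn:univ-H1-pairing} restricts to a perfect pairing between $\Ibf_{\Hfrak}$ and $\HHbf^{1}_{\Afrak}/\Ubf_{\Hfrak}$, and the latter is a free rank-one $\Lambda$-module generated by the class of $[\Hfrak]$ (since $\HHbf^{1}_{\Afrak}/\Ubf_{\Hfrak}\cong \Lambda$ via $\partial_{\Hfrak}$ sending $[\Hfrak]\mapsto 1$). Because $\sigma$ generates $\Ibf_{\Hfrak}\cong \Lambda$, its pairing with the generator $[\Hfrak]$ must be a unit $u := \sigma[\Hfrak]\in\Lambda^{\times}$. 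At the same time, $\sigma\in \Ibf_{\Hfrak}$ gives $\sigma[\Hfrak']=0$ for every hyperplane $\Hfrak'\neq \Hfrak$.

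For (a), apply Lemma~\ref{lemma:alt-pairs}(1) to the parallel pair $(\Hfrak,\Hfrak_{1})$: this yields $\sigma[\Hfrak]\cdot \tau[\Hfrak_{1}] = \sigma[\Hfrak_{1}]\cdot \tau[\Hfrak]$, and since $\sigma[\Hfrak_{1}]=0$ and $\sigma[\Hfrak]=u$ is a unit, we conclude $\tau[\Hfrak_{1}]=0$. For (b), apply Lemma~\ref{lemma:alt-pairs}(2) to the dependent triple $(\Hfrak_{1},\Hfrak_{2},\Hfrak)$ (so $\Hfrak$ plays the role of $\Hfrak_{3}$): using $\sigma[\Hfrak_{1}]=\sigma[\Hfrak_{2}]=0$ and $\sigma[\Hfrak]=u$, the identity collapses to
\[ (-u)\cdot(\tau[\Hfrak_{2}]-\tau[\Hfrak]) \;=\; (-u)\cdot(\tau[\Hfrak_{1}]-\tau[\Hfrak]), \]
and cancelling the unit $u$ gives $\tau[\Hfrak_{1}]=\tau[\Hfrak_{2}]$. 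I don't anticipate any serious obstacle; the only mildly subtle point is recognising that ``$\sigma$ is a generator'' forces $\sigma[\Hfrak]\in \Lambda^{\times}$, which is precisely what is needed to cancel factors in the two applications of Lemma~\ref{lemma:alt-pairs}.
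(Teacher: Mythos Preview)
Your proposal is correct and follows essentially the same approach as the paper: reduce to the explicit generators of $\Ubf_{\Hfrak}^{1}$ from Fact~\ref{fact:Ubf-description}, use that a generator $\sigma$ of $\Ibf_{\Hfrak}$ satisfies $\sigma[\Hfrak]\in\Lambda^{\times}$ and $\sigma[\Hfrak']=0$ for $\Hfrak'\neq\Hfrak$, and then apply Lemma~\ref{lemma:alt-pairs} to parallel pairs and dependent triples. The only cosmetic difference is that the paper places $\Hfrak$ in the first slot of the dependent triple rather than the third, leading to an equivalent simplification.
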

\begin{proof}
  This follows from the explicit description of $\Ubf_{\Hfrak}^{1}$ along with Lemma~\ref{lemma:alt-pairs}.
  Indeed, note first that $\sigma[\Hfrak] \in \Lambda^{\times}$ and $\sigma [\Hfrak'] = 0$ for all $\Hfrak' \neq \Hfrak$, since $\sigma$ generates $\Ibf_{\Hfrak}$.
  If $(\Hfrak,\Hfrak_{1})$ is a parallel pair, then one has
  \[ \sigma[\Hfrak] \cdot \tau[\Hfrak_{1}] = \sigma [\Hfrak_{1}] \cdot \tau [\Hfrak] = 0 \]
  by Lemma~\ref{lemma:alt-pairs}, and as $\sigma[\Hfrak]$ is a unit it follows that $\tau[\Hfrak_{1}] = 0$.
  If $(\Hfrak,\Hfrak_{1},\Hfrak_{2})$ is a dependent triple then the same lemma implies similarly that
  \[ \sigma[\Hfrak] \cdot (\tau([\Hfrak_{1}] - [\Hfrak_{2}])) = (\sigma[\Hfrak_{1}] -\sigma[\Hfrak_{2}]) \cdot (\tau[\Hfrak] - \tau[\Hfrak_{2}]) = 0, \]
  hence $\tau([\Hfrak_{1}] - [\Hfrak_{2}]) = 0$.
  As $\Ubf^{1}_{\Hfrak}$ is generated by $[\Hfrak_{1}]$ for parallel pairs $(\Hfrak,\Hfrak_{1})$ and $[\Hfrak_{1}] - [\Hfrak_{2}]$ for dependent triples $(\Hfrak,\Hfrak_{1},\Hfrak_{2})$, we find that $\tau \in \Dbf_{\Hfrak}^{1}$ as contended.
\end{proof}

\subsection{The main local theorem}

Our task is to prove the following result which will be used to parameterize hyperplanes in the proof of the main theorem.
\begin{theorem}\label{theorem:local-theory}
  Let $\Afrak$ be an affine space of dimension $\geq 2$ with coordinates $\xbf = (x_{1},\ldots,x_{n})$.
  Let $\Ibf \subset \Dbf$ be submodules of $\Pibf^{a}_{\Afrak}$ and let $i \in \{1,\ldots,n\}$ be given, and let $\zbf$ be the partial system of coordinates obtained from $\xbf$ by deleting $x_{i}$.
  Then there exists a hyperplane $\Hfrak_{0}$ which dominates $\Abb^{n-1}_{\zbf}$ via $\pi_{\zbf}$, such that $\Ibf = \Ibf_{\Hfrak_{0}}$ and $\Dbf = \Dbf_{\Hfrak_{0}}$, if and only if there exists some $\sigma \in \Ibf$ with $\sigma \notin \ell \cdot \Pibf^{a}_{\Afrak}$ such that $\Ibf = \Lambda \cdot \sigma$,
  \[ \Dbf = \Ccalbf(\sigma) := \{\tau \in \Pibf^{a} \ | \ [\sigma,\tau] = 0\}, \]
  and the following additional conditions hold:
  \begin{enumerate}
    \item There exists some hyperplane $\Hfrak'$ which dominates $\Abb^{n-1}_{\zbf}$ via $\pi_{\zbf}$ such that $\sigma[\Hfrak'] = 0$.
    \item The element $\sigma$ maps trivially under $\pibf_{\zbf} : \Pibf^{a}_{\Afrak} \to \Pibf_{\zbf}^{a}$.
    \item The map $\Dbf / \Ibf \to \Pibf^{a}_{\zbf}$ induced by $\pibf_{\zbf}$ is an isomorphism.
  \end{enumerate}
\end{theorem}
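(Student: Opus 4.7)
The statement is a biconditional, so I plan to prove each direction separately; the ``only if'' direction will be a straightforward unwinding of definitions, while the ``if'' direction will require the main work.

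For the only-if direction, I would begin by noting that $\Ibf_{\Hfrak_0}$ is the orthogonal (under \eqref{eqn:univ-H1-pairing}) of the corank-one submodule $\Ubf_{\Hfrak_0} \subset \HHbf^1_\Afrak$, hence is cyclic of rank one; any generator $\sigma$ of this module lies outside $\ell \cdot \Pibf^a_\Afrak$, and $\Dbf = \Ccalbf(\sigma)$ is immediate from combining Fact~\ref{fact:decomp-alt-pair} with Lemma~\ref{lemma:decomp-alt-pair-converse}. For condition~(1), I would pick any dominating hyperplane $\Hfrak'$ defined over $k_0$ with $\Hfrak' \ne \Hfrak_0$ (plenty exist, as $\Hfrak_0$ is a single hyperplane among infinitely many candidates); then $\sigma[\Hfrak'] = 0$ holds automatically. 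Condition~(2) holds because every pullback $\pi_\zbf^{-1}\Hfrak$ is non-dominating, so distinct from $\Hfrak_0$, and $\sigma$ kills its Kummer class. Condition~(3) follows from the specialization isomorphism of \S\ref{subsec:inertia-decomp}, composed with the isomorphism $\Pibf^a_{\Hfrak_0} \cong \Pibf^a_\zbf$ induced by $\pi_\zbf|_{\Hfrak_0}$ (an isomorphism of affine spaces, since $\Hfrak_0$ dominates).

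For the if direction, my plan is to define $\Scal := \{H \subset \Afrak \text{ dominating} : \sigma[H] \ne 0\}$; by condition~(2), $\sigma$ is supported in the Kummer basis of $\HHbf^1_\Afrak$ only within $\Scal$. Primitivity of $\sigma$ guarantees the existence of some $H_1 \in \Scal$ with $a_1 := \sigma[H_1] \in \Lambda^\times$. I would then prove that $\Scal = \{H_1\}$: given this, $\sigma$ annihilates the span of all $[\Hfrak]$ with $\Hfrak \ne H_1$, so $\sigma \in \Ibf_{H_1}$, and primitivity of $\sigma$ then forces $\sigma$ to generate $\Ibf_{H_1}$, giving $\Ibf = \Lambda \sigma = \Ibf_{H_1}$, while $\Dbf = \Ccalbf(\sigma) = \Dbf_{H_1}$ follows from Fact~\ref{fact:decomp-alt-pair} and Lemma~\ref{lemma:decomp-alt-pair-converse}. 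Hence $\Hfrak_0 := H_1$ is the hyperplane sought. The main work is to show $\Scal = \{H_1\}$, which I would do by contradiction. Assume $H_2 \in \Scal \setminus \{H_1\}$ with $a_2 := \sigma[H_2] \ne 0$. In the primary case, I would produce an auxiliary dominating hyperplane $L \notin \Scal$ not parallel to $H_1$; set $M_i := \pi_\zbf^{-1}(\pi_\zbf(L \cap H_i))$, which one checks satisfies $H_i \cap M_i = H_i \cap L$, so that $(H_i, M_i, L)$ is a dependent triple whenever $L \not\parallel H_i$. For any $\bar\tau \in \Pibf^a_\zbf$, condition~(3) provides a lift $\tau \in \Ccalbf(\sigma)$; Lemma~\ref{lemma:alt-pairs}(2) applied to $(H_1, M_1, L)$ with $\sigma[M_1] = \sigma[L] = 0$ yields $a_1 (\tau[L] - \tau[M_1]) = 0$, and since $a_1$ is a unit, $\tau[L] = \bar\tau[\pi_\zbf M_1]$. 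A parallel argument on the $H_2$ side --- using the dependent triple $(H_2, M_2, L)$ if $L \not\parallel H_2$, or the parallel pair $(H_2, L)$ otherwise --- yields a second expression giving $a_2 \cdot \bar\tau[\alpha] = 0$ for some nontrivial Kummer class $\alpha \in \HHbf^1_\zbf$ independent of $\bar\tau$; picking $\bar\tau$ dual to $\alpha$ forces $a_2 = 0$, the desired contradiction.

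The hard part will be the secondary case, in which every dominating $L$ with $\sigma[L] = 0$ is parallel to $H_1$, so that $\Scal$ contains every dominating hyperplane not parallel to $H_1$. I would then pick $H_2' \in \Scal$ with $H_2' \not\parallel H_1$ (such $H_2'$ exists because $|k_0| \ge 2$ and $n \ge 2$). The hyperplane $\Hfrak'$ furnished by condition~(1) must then be parallel to $H_1$, hence not parallel to $H_2'$. Combining the parallel pair relation $(H_1, \Hfrak')$ (which yields $\tau[\Hfrak'] = 0$) with the dependent triple $(H_2', \pi_\zbf^{-1}(\pi_\zbf(H_2' \cap \Hfrak')), \Hfrak')$ gives $a_2' \cdot \bar\tau[\pi_\zbf(H_2' \cap \Hfrak')] = 0$ for every $\bar\tau$ (where $a_2' := \sigma[H_2']$), and again dual-basis considerations yield $a_2' = 0$, contradicting $H_2' \in \Scal$. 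The main obstacle throughout is arranging these auxiliary hyperplanes with the correct incidence relative to $H_1, H_2$; in particular, verifying the non-degeneracy condition $\pi_\zbf M_1 \ne \pi_\zbf M_2$ (so that the resulting $\alpha$ is nontrivial) requires $H_1 \ne H_2$ combined with a generic position argument on $L$.
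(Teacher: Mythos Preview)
Your ``only if'' direction is fine and matches the paper. The secondary case of your ``if'' direction is also correct. The gap is in the primary case: your ``generic position argument on $L$'' presupposes that you can vary $L$ among dominating hyperplanes with $\sigma[L]=0$ and $L\not\parallel H_1$, but the primary-case hypothesis only hands you \emph{one} such $L$. If that unique $L$ happens to lie in the pencil through $H_1\cap H_2$ (equivalently, $(H_1,H_2,L)$ is a dependent triple), then $M_1=M_2$ and your class $\alpha$ vanishes, so no contradiction is produced. This is fixable by bootstrapping---one pass with the given $L$ kills every dominating $H$ outside the pencil through $L\cap H_1$, after which you have plenty of new candidates $L'$ with $L'\cap H_1\neq L\cap H_1$ for a second pass---but this is a genuine extra step that your sketch does not supply.

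The paper sidesteps this issue by organising the argument differently. Its key computation (Lemma~\ref{lemma:local-theory-step}) compares $\sigma[\Hfrak_1]$ for $\Hfrak_1$ \emph{parallel} to $\Hfrak_0$ with $\sigma[\Hfrak_2]$ for $\Hfrak_2$ dominating and \emph{meeting} $\Hfrak_0$, using two vertical hyperplanes that are automatically distinct because $\Hfrak_0\cap\Hfrak_2$ and $\Hfrak_1\cap\Hfrak_2$ lie in disjoint parallel hyperplanes. With this lemma in hand, condition~(1) yields (Lemma~\ref{lemma:local-theory-step-parallel}) a hyperplane $\Hfrak_1$ parallel to $\Hfrak_0$ with $\sigma[\Hfrak_1]=0$, and then a second application of Lemma~\ref{lemma:local-theory-step} finishes. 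In effect, your ``secondary case'' argument is close to the paper's main line, but you have demoted it to a fallback; the paper instead leads with it, having first manufactured the required parallel hyperplane with vanishing $\sigma$-value via the comparison lemma.
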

The rest of this section will be devoted to proving this key result.

\subsection{The implication ``$\,\Rightarrow$''}

Suppose first that $i \in \{1,\ldots,n\}$ is given, $\zbf$ is as in the statement of the theorem, and that there exists a hyperplane $\Hfrak_{0}$ dominating $\Abb^{n-1}_{\zbf}$ such that $\Ibf_{\Hfrak_{0}} = \Ibf$ and $\Dbf_{\Hfrak_{0}} = \Dbf$.
Then $\Ibf = \Ibf_{\Hfrak_{0}}$ has a generator $\sigma$ satisfying $\sigma [\Hfrak_{0}]  = 1$ hence $\sigma \notin \ell \cdot \Pibf^{a}_{\Afrak}$.
Also $\sigma[\Hfrak] = 0$ for all hyperplanes $\Hfrak \neq \Hfrak_{0}$, by the explicit description of $\Ibf_{\Hfrak_{0}}$, and so condition~(1) clearly holds true for any hyperplane $\Hfrak'$ dominating $\Abb^{n-1}_{\zbf}$ which is distinct from $\Hfrak_{0}$.
Condition (2) follows from the fact that $\Hfrak_{0}$ dominates $\Abb^{n-1}_{\zbf}$.
As for condition (3), recall that $\Dbf_{\Hfrak_{0}}/\Ibf_{\Hfrak_{0}} \cong \Pibf^{a}_{\Hfrak_{0}}$, and the induced map $\Pibf^{a}_{\Hfrak_{0}} \to \Pibf^{a}_{\zbf}$ is simply the map obtained by functoriality (and taking limits) from the composition
\[ \Hfrak_{0} \hookrightarrow \Afrak \xrightarrow{\pi_{\zbf}} \Abb^{n-1}_{\zbf} \]
which is an isomorphism of varieties.
This map $\Dbf_{\Hfrak_{0}}/\Ibf_{\Hfrak_{0}} \to \Pibf^{a}_{\zbf}$ is thus also an isomorphism, so condition (3) follows.
Lastly, we have $\Dbf_{\Hfrak_{0}} = \Ccalbf(\sigma)$ by Fact~\ref{fact:decomp-alt-pair} and Lemma~\ref{lemma:decomp-alt-pair-converse}.

\subsection{The implication ``$\Leftarrow$''}

Suppose now that $\zbf$ and $\sigma$ are as in the statement of the theorem so that conditions (1), (2) and (3) hold true and $\Ibf = \Lambda \cdot \sigma$, $\Dbf = \Ccalbf(\sigma)$.
Since $\sigma$ is not a multiple of $\ell$ by assumption, there is a hyperplane $\Hfrak_{0}$ such that $\sigma[\Hfrak_{0}] \in \Lambda^{\times}$.
Condition (2) implies that $\Hfrak_{0}$ dominates $\Abb^{n-1}_{\zbf}$, for otherwise its image would be a hyperplane in $\Abb^{n-1}_{\zbf}$ and thus the image of $\sigma$ in $\Pibf^{a}_{\zbf}$ would have to be nontrivial.
Our goal is thus to show that $\Lambda \cdot \sigma = \Ibf_{\Hfrak_{0}}$ and $\Ccalbf(\sigma) = \Dbf_{\Hfrak_{0}}$.
We will use the explicit descriptions of $\Ibf_{\Hfrak_{0}}$ and $\Dbf_{\Hfrak_{0}}$ to do this.

Let us show that $\sigma[\Hfrak] = 0$ for all hyperplanes $\Hfrak \neq \Hfrak_{0}$.
Suppose first that $\Hfrak_{1}$ is a hyperplane such that $(\Hfrak_{0},\Hfrak_{1})$ form a parallel pair.
Choose a distinct hyperplane $\Hfrak_{2}$ which dominates $\Abb^{n-1}_{\zbf}$ and which meets $\Hfrak_{0}$.
Note that $\Hfrak_{2}$ necessarily meets $\Hfrak_{1}$ as well.
There exist two distinct hyperplanes which are vertical over $\Abb^{n-1}_{\zbf}$ say $\Hfrak_{1}'$, $\Hfrak_{2}'$, such that $(\Hfrak_{0},\Hfrak_{2},\Hfrak_{1}')$ and $(\Hfrak_{1},\Hfrak_{2},\Hfrak_{2}')$ are dependent triples, see Figure~\ref{figure:lines} for a picture in the two-dimensional case.
Explicitly, $\Hfrak_{1}' = \pi_{\zbf}^{-1}(\pi_{\zbf}(\Hfrak_{0} \cap \Hfrak_{2}))$ and $\Hfrak_{2}' = \pi_{\zbf}^{-1}(\pi_{\zbf}(\Hfrak_{1} \cap \Hfrak_{2}))$.
\begin{figure}
\begin{tikzpicture}
  \draw (0,-1) -- (5,-1) node[anchor=west] {\tiny $\Abb^{n-1}_{\zbf}$};
  %\draw (0,0) rectangle (5,4);
  \draw (0,1) -- (5,1.5) node[anchor=west] {\tiny $\Hfrak_{0}$};
  \draw (0,2) -- (5,2.5) node[anchor=west] {\tiny $\Hfrak_{1}$};
  \draw (0,3) node[anchor=east] {\tiny $\Hfrak_{2}$} -- (5,1) ;
  \draw (2,0) node[anchor=north] {\tiny $\Hfrak_{2}'$} -- (2,4) ;
  \draw (4,0) node[anchor=north] {\tiny $\Hfrak_{1}'$} -- (4,4);
\end{tikzpicture}
\caption{Configuration for proof of Theorem~\ref{theorem:local-theory}.}\label{figure:lines}
\end{figure}
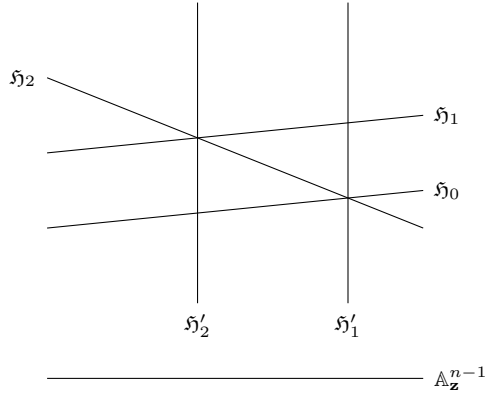

By condition (3), there exists some $\tau \in \Ccalbf(\sigma)$ such that $\tau[\Hfrak_{1}'] = 0$ and $\tau[\Hfrak_{2}'] = 1$.
And by condition (2), we have $\sigma[\Hfrak_{1}'] = \sigma[\Hfrak_{2}'] = 0$.
Put $\alpha_{i} := \sigma[\Hfrak_{i}]$ and $\beta_{i} := \tau[\Hfrak_{i}]$.
Since $(\Hfrak_{0},\Hfrak_{1})$ is a parallel pair and $[\sigma,\tau] = 0$, by Lemma~\ref{lemma:alt-pairs}, we have
\begin{equation}
\alpha_{0} \cdot \beta_{1} = \alpha_{1} \cdot \beta_{0}.
\end{equation}
Also, since $(\Hfrak_{0},\Hfrak_{2},\Hfrak_{1}')$ is a dependent triple with $\tau[\Hfrak_{1}'] = 0$ and $\sigma[\Hfrak_{1}'] = 0$, we have again from Lemma~\ref{lemma:alt-pairs} that
\begin{equation}
\alpha_{0} \cdot \beta_{2} = \alpha_{2} \cdot \beta_{0}.
\end{equation}
Recall that $\alpha_{0} \in \Lambda^{\times}$, so we may solve for $\beta_{1},\beta_{2}$ as follows:
\begin{equation}\label{eqn:alt-pair-calc}
\beta_{1} = \frac{\alpha_{1} \cdot \beta_{0}}{\alpha_{0}}, \ \beta_{2} = \frac{\alpha_{2} \cdot \beta_{0}}{\alpha_{0}}.
\end{equation}

Next, since $(\Hfrak_{1},\Hfrak_{2},\Hfrak_{2}')$ is a dependent triple, with $\tau[\Hfrak_{2}'] = 1$ and $\sigma[\Hfrak_{2}'] = 0$, we have from Lemma~\ref{lemma:alt-pairs} that
\begin{equation}
  \alpha_{1} \cdot (\beta_{2} - 1) = \alpha_{2} \cdot (\beta_{1} - 1).
\end{equation}
Substituting equation~\eqref{eqn:alt-pair-calc}, we find
\begin{equation}
  \alpha_{1} \cdot \left(\frac{\alpha_{2} \cdot \beta_{0}}{\alpha_{0}} - 1\right) = \alpha_{2} \cdot \left(\frac{\alpha_{1} \cdot \beta_{0}}{\alpha_{0}} - 1\right).
\end{equation}
Subtracting a term of the form $(\alpha_{1} \cdot \alpha_{2} \cdot \beta_{0})/\alpha_{0}$ from both sides and negating, we find that $\alpha_{1} = \alpha_{2}$.
To summarize, we have obtained the following.
\begin{lemma}\label{lemma:local-theory-step}
  In the above context, suppose that $(\Hfrak_{0},\Hfrak_{1})$ is a parallel pair, and that $\Hfrak_{2}$ is a distinct hyperplane which dominates $\Abb^{n-1}_{\zbf}$ and which meets $\Hfrak_{0}$.
  Then $\sigma[\Hfrak_{1}] = \sigma[\Hfrak_{2}]$.
\end{lemma}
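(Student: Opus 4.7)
The plan is to construct auxiliary vertical hyperplanes and then extract the desired equality by applying Lemma~\ref{lemma:alt-pairs} to the hypothesis $[\sigma,\tau]=0$ for a carefully chosen $\tau\in\Dbf=\Ccalbf(\sigma)$. First I would pick two distinct vertical hyperplanes $\Hfrak_1'$ and $\Hfrak_2'$, i.e.\ hyperplanes of the form $\pi_{\zbf}^{-1}(\Hfrak)$, such that $(\Hfrak_0,\Hfrak_2,\Hfrak_1')$ and $(\Hfrak_1,\Hfrak_2,\Hfrak_2')$ are both dependent triples. This is possible by elementary incidence geometry: each of the codimension-two flats $\Hfrak_0\cap\Hfrak_2$ and $\Hfrak_1\cap\Hfrak_2$ projects into a hyperplane of $\Abb^{n-1}_{\zbf}$ whose preimage is a vertical hyperplane through it. (Note that $\Hfrak_1\cap\Hfrak_2\neq\varnothing$ automatically, since $\Hfrak_0\parallel\Hfrak_1$ means $\Hfrak_0$ and $\Hfrak_1$ have the same linear part, so $\Hfrak_2$ meeting $\Hfrak_0$ forces $\Hfrak_2$ to meet $\Hfrak_1$ as well.) By condition~(2), $\sigma[\Hfrak_1']=\sigma[\Hfrak_2']=0$; and by condition~(3), the projection $\Dbf/\Ibf\to\Pibf^a_{\zbf}$ is an isomorphism, so I can find $\tau\in\Dbf$ with $\tau[\Hfrak_1']=0$ and $\tau[\Hfrak_2']=1$, provided $[\Hfrak_1']$ and $[\Hfrak_2']$ descend to linearly independent classes in $\HHbf^1_{\zbf}$, which is ensured by choosing $\Hfrak_1'$ and $\Hfrak_2'$ distinct.

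Setting $\alpha_i:=\sigma[\Hfrak_i]$ and $\beta_i:=\tau[\Hfrak_i]$ for $i=0,1,2$, I would then invoke Lemma~\ref{lemma:alt-pairs} three times. The parallel pair $(\Hfrak_0,\Hfrak_1)$ gives
\[ \alpha_0\beta_1 = \alpha_1\beta_0. \]
The dependent triple $(\Hfrak_0,\Hfrak_2,\Hfrak_1')$, combined with $\sigma[\Hfrak_1']=\tau[\Hfrak_1']=0$, gives
\[ \alpha_0\beta_2 = \alpha_2\beta_0. \]
The dependent triple $(\Hfrak_1,\Hfrak_2,\Hfrak_2')$, combined with $\sigma[\Hfrak_2']=0$ and $\tau[\Hfrak_2']=1$, gives
\[ \alpha_1(\beta_2 - 1) = \alpha_2(\beta_1 - 1). \]
Since $\alpha_0\in\Lambda^\times$, the first two equations solve to $\beta_i = \alpha_i\beta_0/\alpha_0$ for $i=1,2$; substituting into the third equation, the cross-term $\alpha_1\alpha_2\beta_0/\alpha_0$ cancels on both sides, leaving $-\alpha_1=-\alpha_2$, i.e.\ $\alpha_1=\alpha_2$, which is exactly $\sigma[\Hfrak_1]=\sigma[\Hfrak_2]$.

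The main obstacle I anticipate is the incidence step: one must be sure that the vertical hyperplanes $\Hfrak_1'$ and $\Hfrak_2'$ really exist, are distinct, and are defined over the appropriate subfield so that condition~(3) can be used to prescribe $\tau$'s values on them independently. Once that is set up, the computation is purely formal algebra driven by Lemma~\ref{lemma:alt-pairs}, and the invertibility of $\alpha_0$ (guaranteed by the choice of $\Hfrak_0$ with $\sigma[\Hfrak_0]\in\Lambda^\times$) is what makes the linear system solvable.
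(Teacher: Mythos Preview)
Your proposal is correct and follows essentially the same argument as the paper: the same auxiliary vertical hyperplanes $\Hfrak_1',\Hfrak_2'$, the same choice of $\tau$ via condition~(3), and the same three applications of Lemma~\ref{lemma:alt-pairs} leading to the cancellation $\alpha_1=\alpha_2$. The incidence concern you flag is handled in the paper only by a picture (Figure~\ref{figure:lines}); your sketch of why $\Hfrak_1'\neq\Hfrak_2'$ is in fact slightly more explicit than what the paper provides.
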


Let $\Hfrak'$ be as in condition (1), so that $\Hfrak'$ dominates $\Abb^{n-1}_{\zbf}$ and $\sigma[\Hfrak'] = 0$.
As $\sigma[\Hfrak_{0}] \ne 0$, we have $\Hfrak_{0} \ne \Hfrak'$.
If $(\Hfrak_{0},\Hfrak')$ is not a parallel pair, then we choose a distinct hyperplane $\Hfrak_{1}$ such that $(\Hfrak_{0},\Hfrak_{1})$ is a parallel pair, while Lemma~\ref{lemma:local-theory-step} ensures that $\sigma[\Hfrak_{1}] = \sigma[\Hfrak'] = 0$.
In any case, we deduce the following.
\begin{lemma}\label{lemma:local-theory-step-parallel}
  There exists a parallel pair of the form $(\Hfrak_{0},\Hfrak_{1})$ such that $\sigma[\Hfrak_{1}] = 0$.
\end{lemma}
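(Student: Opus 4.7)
The plan is a short case split exploiting condition (1) of Theorem~\ref{theorem:local-theory} together with the already-proved Lemma~\ref{lemma:local-theory-step}. First, I would invoke condition (1) to produce a hyperplane $\Hfrak'$ which dominates $\Abb^{n-1}_{\zbf}$ and satisfies $\sigma[\Hfrak']=0$. Since $\sigma[\Hfrak_0]\in\Lambda^\times\neq 0$, we necessarily have $\Hfrak'\neq\Hfrak_0$, so $\Hfrak'$ is a genuine second hyperplane to work with.

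Next I would split into two cases. If $(\Hfrak_0,\Hfrak')$ already forms a parallel pair, there is nothing to do: take $\Hfrak_1:=\Hfrak'$ and the lemma follows immediately. Otherwise, the two distinct hyperplanes $\Hfrak_0$ and $\Hfrak'$ meet, and we need to manufacture a parallel companion for $\Hfrak_0$ with vanishing $\sigma$-pairing. Here I would simply choose any hyperplane $\Hfrak_1$ with $(\Hfrak_0,\Hfrak_1)$ parallel, which exists because $\dim\Afrak\geq 2$ (concretely, translate the defining equation of $\Hfrak_0$ by a nonzero constant). Then Lemma~\ref{lemma:local-theory-step}, applied with this $\Hfrak_1$ and with $\Hfrak_2:=\Hfrak'$ in the role of the hyperplane dominating $\Abb^{n-1}_{\zbf}$ and meeting $\Hfrak_0$, yields $\sigma[\Hfrak_1]=\sigma[\Hfrak']=0$, as desired.

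There is essentially no obstacle: the content of the argument has been pushed into Lemma~\ref{lemma:local-theory-step}, whose numerical calculation via Lemma~\ref{lemma:alt-pairs} already encodes the commutator/cup-product relations. The only minor points to verify are that the three hyperplanes $\Hfrak_0$, $\Hfrak_1$, $\Hfrak'$ are mutually distinct (which follows because $\sigma[\Hfrak_0]\neq 0=\sigma[\Hfrak']$ forces $\Hfrak_0\neq\Hfrak'$, while the parallel pair $(\Hfrak_0,\Hfrak_1)$ is disjoint from $\Hfrak'$ in a generic direction) and that $\Hfrak'$ genuinely meets $\Hfrak_0$ in the non-parallel case, which is automatic from the definition of parallel pair. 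Hence the lemma reduces to a one-line corollary of Lemma~\ref{lemma:local-theory-step} together with the existence of parallels in affine spaces of dimension at least two.
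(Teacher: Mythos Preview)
Your proposal is correct and follows essentially the same approach as the paper: invoke condition~(1) to obtain $\Hfrak'$, note $\Hfrak'\neq\Hfrak_0$, and in the non-parallel case pick any $\Hfrak_1$ parallel to $\Hfrak_0$ and apply Lemma~\ref{lemma:local-theory-step} with $\Hfrak_2=\Hfrak'$. The only cosmetic point is that the distinctness $\Hfrak_1\neq\Hfrak'$ follows more cleanly from the fact that $\Hfrak'$ meets $\Hfrak_0$ while $\Hfrak_1$ does not, rather than your ``generic direction'' remark.
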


Let $\Hfrak_{1}$ be as in Lemma~\ref{lemma:local-theory-step-parallel}.
Now suppose that $\Hfrak_{2}$ is any hyperplane distinct from $\Hfrak_{0}$.
We wish to show that $\sigma[\Hfrak_{2}] = 0$.
If $\Hfrak_{2} = \Hfrak_{1}$ then we are done, and if $\Hfrak_{2}$ is vertical over $\Abb^{n-1}_{\zbf}$ then we are done by condition (2).
If $(\Hfrak_{0},\Hfrak_{2})$ is a parallel pair and $\Hfrak_{2} \ne \Hfrak_{1}$, then we may choose a distinct hyperplane $\Hfrak_{3}$ which dominates $\Abb^{n-1}_{\zbf}$ and which meets $\Hfrak_{0}$, hence it also meets $\Hfrak_{1}$ and $\Hfrak_{2}$.
Applying Lemma~\ref{lemma:local-theory-step}, we see that
\[ \sigma[\Hfrak_{2}] = \sigma[\Hfrak_{3}] = \sigma[\Hfrak_{1}] = 0. \]
Finally, if $\Hfrak_{2}$ meets $\Hfrak_{0}$ and is not vertical over $\Abb^{n-1}_{\zbf}$, then it must dominate $\Abb^{n-1}_{\zbf}$, and thus by Lemma~\ref{lemma:local-theory-step} we have $\sigma[\Hfrak_{2}] = \sigma[\Hfrak_{1}] = 0$.
This indeed shows that $\sigma[\Hfrak_{2}] = 0$ whenever $\Hfrak_{2}\neq \Hfrak_{0}$, as contended.
By the explicit description of $\Ibf_{\Hfrak_{0}}$, it follows that $\Lambda \cdot \sigma = \Ibf_{\Hfrak_{0}}$, while Fact~\ref{fact:decomp-alt-pair} and Lemma~\ref{lemma:decomp-alt-pair-converse} imply that $\Ccalbf(\sigma) = \Dbf_{\Hfrak_{0}}$.
This concludes the proof of Theorem~\ref{theorem:local-theory}.

\section{The proof of the main theorem}

We now consider the context of Theorem~\ref{maintheorem:general}.
Put $\Gfrak := \Aut^{c}(\Pi^{a}_{\Hscr_{\xbf,S}})$.
We start by constructing actions of $\Gfrak$ on various objects, all of which have a natural $\Gal(k|k_{0})$ action.
As always, when working with profinite groups we consider them as objects in the category with \emph{outer morphisms}.

\subsection{The action on fundamental groups}\label{subsec:action-fund-group}

Merely by definition, $\Gfrak$ acts naturally on $\Pi^{a}_{X}$ for all objects $X$ in $\Hscr_{\xbf,S}$, in a way compatible with all of the morphisms arising from morphisms in $\Hscr_{\xbf,S}$ and with the relation $[-,-] = 0$ on $\Pi^{a}_{X}$ in the sense that for $\gamma \in \Gfrak$ and $\sigma,\tau \in \Pi^{a}_{X}$, one has $[\sigma,\tau] = 0$ if and only if $[\gamma\sigma,\gamma\tau] = 0$.
In particular, $\Gfrak$ acts naturally on $\Pibf^{a}_{\Afrak}$, compatibly with the relation $[-,-] = 0$, and the canonical maps
\[ \pi_{z} : \Pibf^{a}_{\Afrak} \to \Pi^{a}_{\Pbb^{1} \smin \{0,1,\infty\}}, \]
are $\Gfrak$-equivariant for all $z$ of the form $x_{i} - s$ where $1 \le i \le n$ and $s \in S$, or of the form $x_{i} - x_{j}$ where $1 \le i < j \le n$.

For every object $X$ of $\Hscr_{\xbf,S}$, we may also define an action of $\Gfrak$ on $\HH^{1}(X)$ by identifying it with $\Hom(\Pi^{a}_{X},\Lambda)$ via~\eqref{eqn:H1-pairing}.
Note that $\Gfrak$ acts \emph{trivially} on $\Lambda$ in this formula.

Passing to the colimit, we obtain a natural action of $\Gfrak$ on $\HHbf^{1}_{\Afrak}$, and the maps
\[ \iota_{z} : \HH^{1}(\Pbb^{1} \smin \{0,1,\infty\}) \to \HHbf^{1}_{\Afrak} \]
are $\Gfrak$-equivariant as well for the $z$ as above.

\subsection{Linear projections}

The goal of this subsection is to prove the following technical proposition.
\begin{proposition}\label{proposition:linear-proj}
  Let $\zbf$ be any partial system of coordinates contained in $\xbf$.
  The action of $\Gfrak$ on $\HHbf^{1}_{\Afrak}$ restricts to an action on the image of the embedding $\iotabf_{\zbf} : \HHbf^{1}_{\zbf} \hookrightarrow \HHbf^{1}_{\Afrak}$.
\end{proposition}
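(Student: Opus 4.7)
I would prove the proposition via dualization and the local theory, after reducing to the case where $\zbf$ differs from $\xbf$ by a single coordinate.

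\emph{Reduction and dualization.} For any partial system $\zbf \subset \xbf$, one has the identity
\[
  \iotabf_\zbf(\HHbf^1_\zbf) \;=\; \bigcap_{x_j \in \xbf \smin \zbf} \iotabf_{\xbf \smin \{x_j\}}(\HHbf^1_{\xbf \smin \{x_j\}}),
\]
since both sides equal the direct sum of $\Lambda \cdot [\Hfrak]$ as $\Hfrak$ runs over hyperplanes whose defining linear equation depends only on the coordinates in $\zbf$. Intersections of $\Gfrak$-stable submodules are $\Gfrak$-stable, so it suffices to prove the statement for $\zbf = \xbf \smin \{x_j\}$. For such $\zbf$, the perfect pairing~\eqref{eqn:univ-H1-pairing} shows that $\Gfrak$-stability of $\iotabf_\zbf(\HHbf^1_\zbf)$ is equivalent to $\Gfrak$-stability of its orthogonal $\ker(\pibf_\zbf) \subset \Pibf^a_\Afrak$, which by the description of the pairing equals the closure of $\sum_\Hfrak \Ibf_\Hfrak$ as $\Hfrak$ ranges over hyperplanes dominant over $\Abb^{n-1}_\zbf$ via $\pi_\zbf$ (i.e.\ those whose defining equation involves $x_j$ non-trivially).

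\emph{Applying the local theory.} Fix $g \in \Gfrak$ and a generator $\sigma$ of $\Ibf_{\Hfrak_0}$ for some dominant $\Hfrak_0$. Setting $\Ibf' := \Lambda \cdot (g\sigma)$ and $\Dbf' := \Ccalbf(g\sigma)$, the $\Gfrak$-invariance of the commutator $[-,-]$ gives $\Dbf' = g \cdot \Dbf_{\Hfrak_0}$, and clearly $g\sigma \notin \ell \cdot \Pibf^a_\Afrak$. By Theorem~\ref{theorem:local-theory}, verifying conditions (1), (2), (3) for $(\Ibf', \Dbf')$ would force $(\Ibf', \Dbf') = (\Ibf_{\Hfrak_0'}, \Dbf_{\Hfrak_0'})$ for some dominant $\Hfrak_0'$, whence $g\sigma \in \ker(\pibf_\zbf)$ by~(2) itself. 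Conditions~(1) and~(3) are to be verified directly using the $\Gfrak$-equivariance of the projections $\pibf_{x_j - s}$ (for $s \in S$) and $\pibf_{x_j - x_l}$ (for $l \neq j$) coming from morphisms in $\Hscr_{\xbf,S}$, combined with the explicit descriptions of $\Ubf_\Hfrak$ and $\Ubf_\Hfrak^1$ in Fact~\ref{fact:Ubf-description}.

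\emph{Main obstacle.} The delicate point is condition~(2), which asks that $g\sigma \in \ker(\pibf_\zbf)$ -- precisely what we are trying to prove, producing an apparent circularity. The way to resolve this is to observe that once conditions~(1) and~(3) are established, they constrain the pairing of $g\sigma$ against Kummer classes strongly enough, via the description of $\Dbf'/\Ibf'$ as an inertia/decomposition quotient, to force $g\sigma$ to annihilate all Kummer classes of hyperplanes not involving $x_j$, not merely those arising from the finitely many distinguished projections $\pi_{x_i - s}, \pi_{x_i - x_l}$ with $i \neq j$. This bootstrap step will rely on the hypothesis that $S$ generates $k_0$ and on the continuity of the $\Gfrak$-action on the profinite module $\Pibf^a_\Afrak$, extending the finitely many equivariant constraints into the required annihilation statement across the full family of vertical Kummer classes.
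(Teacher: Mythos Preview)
Your approach contains a genuine circularity that the sketched bootstrap does not resolve. You correctly flag that condition~(2) of Theorem~\ref{theorem:local-theory}, namely $g\sigma \in \ker(\pibf_\zbf)$, is exactly the statement you want. But condition~(3) is no easier: for the map $\Dbf'/\Ibf' \to \Pibf^a_\zbf$ induced by $\pibf_\zbf$ even to be well-defined on the quotient one must have $\Ibf' \subset \ker(\pibf_\zbf)$, which \emph{is}~(2). So the plan ``verify~(1) and~(3), then bootstrap to~(2)'' is incoherent as stated. Moreover, the only $\Gfrak$-equivariant projections available a priori are the maps $\pi_z : \Pibf^a_\Afrak \to \Pi^a_{\Pbb^1 \smin \{0,1,\infty\}}$ for the distinguished $z$; these land in a rank-two module and cannot by themselves witness the isomorphism in~(3), which concerns the full infinite-rank target $\Pibf^a_\zbf$. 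The appeal to ``$S$ generates $k_0$'' and to continuity is a red herring here --- neither hypothesis plays any role in this proposition.

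The paper sidesteps the trap by working on the cohomology side and inducting in the \emph{opposite} direction: from shorter partial systems $\zbf$ to longer ones $\wbf = (\zbf,w)$, rather than reducing to the codimension-one case. Assuming the image of $\iotabf_\zbf$ is already $\Gfrak$-stable, it gives an explicit characterization of the image of $\iotabf_\wbf$ phrased entirely in terms of the pairing, the relation $[-,-]=0$, the stable image of $\iotabf_\zbf$, and the rank-two equivariant map $\iota_w : \HH^1(\Pbb^1 \smin \{0,1,\infty\}) \to \HHbf^1_\Afrak$. The local theory (Theorem~\ref{theorem:local-theory}) is not invoked at all in this argument; it enters only afterward, once $\pibf_\zbf$ is known to be equivariant, in constructing the action on hyperplanes.
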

\begin{proof}
  We proceed by induction on the length of $\zbf$ with the base-case where $\zbf$ is empty is vacuously true.
  Write $\zbf = (z_{1},\ldots,z_{m})$ and $\wbf = (z_{1},\ldots,z_{m},w)$ with both $\zbf$ and $\wbf$ being contained in $\xbf$, and assume the result holds true for $\zbf$.
  We will characterize the image of $\iotabf_{\wbf} : \HHbf^{1}_{\wbf} \to \HHbf^{1}_{\Afrak}$ using the following data:
  \begin{enumerate}
    \item The image of $\iotabf_{\zbf} : \HHbf^{1}_{\zbf} \to \HHbf^{1}_{\Afrak}$.
    \item The image of $\iota_{w} : \HH^{1}(\Pbb^{1} \smin \{0,1,\infty\}) \to \HHbf^{1}_{\Afrak}$.
    \item The pairing $\Pibf^{a}_{\Afrak} \times \HHbf^{1}_{\Afrak} \to \Lambda$.
    \item The relation $[-,-] = 0$ on $\Pibf^{a}_{\Afrak}$.
  \end{enumerate}
  This suffices since the action $\Gfrak$ is compatible with items (2), (3) and (4) by definition, and is compatible with (1) by the inductive hypothesis.
  Precisely, we claim that $\alpha \in \HHbf^{1}_{\Afrak}$ is in the image of $\iotabf_{\wbf} : \HHbf^{1}_{\wbf} \to \HHbf^{1}_{\Afrak}$ if and only if it satisfies the following condition.
  \begin{condition}\label{condition}
    For all $\beta$ in the image of $\iota_{w} : \HH^{1}(\Pbb^{1} \smin \{0,1,\infty\}) \to \HHbf^{1}_{\Afrak}$ and for all $\sigma,\tau \in \Pibf^{a}_{\Afrak}$ which pair trivially with the image of $\iotabf_{\zbf} : \HHbf^{1}_{\zbf} \to \HHbf^{1}_{\Afrak}$ and which satisfy $[\sigma,\tau] = 0$, one has $\sigma \alpha \cdot \tau \beta = \sigma \beta \cdot \tau \alpha$.
  \end{condition}

  First let us show that any element in the image of $\iotabf_{\wbf} : \HHbf^{1}_{\wbf} \to \HHbf^{1}_{\Afrak}$ satisfies this condition.
  As this condition is clearly linear in $\alpha$, it suffices to show that the Kummer classes $[\Hfrak]$, with $\Hfrak$ a hyperplane vertical over $\Abb^{m+1}_{\wbf}$, satisfy this condition.
  So let $\Hfrak$ be such a hyperplane, and let $\beta$, $\sigma$ and $\tau$ be as in the condition.
  Write $\beta = a \cdot [\Hfrak_{w}] + b \cdot [\Hfrak_{1-w}]$ with $a,b \in \Lambda$.
  If $\Hfrak$ is vertical over $\Abb^{m}_{\zbf}$, then the assertion is clear as one has $\sigma [\Hfrak] = \tau [\Hfrak] = 0$.
  Otherwise, $\Hfrak$ dominates $\Abb^{m}_{\zbf}$, hence there exist two hyperplanes $\Hfrak_{1}$, $\Hfrak_{2}$ which are vertical over $\Abb^{m}_{\zbf}$ such that $(\Hfrak,\Hfrak_{w},\Hfrak_{1})$ and $(\Hfrak,\Hfrak_{1-w},\Hfrak_{2})$ are dependent triples.
  As $\sigma [\Hfrak_{i}] = \tau [\Hfrak_{i}] = 0$, the assertion follows from Lemma~\ref{lemma:alt-pairs} as follows:
  \begin{align*}
    \sigma [\Hfrak] \cdot \tau \beta &= \sigma[\Hfrak] \cdot (a \cdot \tau [\Hfrak_{w}] + b \cdot \tau [\Hfrak_{1-w}]) \\
                                     &= a \cdot \sigma [\Hfrak] \cdot \tau [\Hfrak_{w}] + b \cdot \sigma[\Hfrak] \cdot \tau [\Hfrak_{1-w}] \\
                                     &= a \cdot (\sigma [\Hfrak] - \sigma [\Hfrak_{1}]) \cdot (\tau [\Hfrak_{w}] - \tau [\Hfrak_{1}]) + b \cdot (\sigma [\Hfrak] - \sigma [\Hfrak_{2}]) \cdot (\tau [\Hfrak_{1-w}] - \tau [\Hfrak_{2}]) \\
                                     &= a \cdot (\sigma [\Hfrak_{w}] - \sigma [\Hfrak_{1}]) \cdot (\tau [\Hfrak] - \tau [\Hfrak_{1}]) + b \cdot (\sigma [\Hfrak_{1-w}] - \sigma [\Hfrak_{2}]) \cdot (\tau [\Hfrak] - \tau [\Hfrak_{2}]) \\
                                     &= a \cdot \sigma [\Hfrak_{w}] \cdot \tau [\Hfrak] + b \cdot \sigma[\Hfrak_{1-w}] \cdot \tau [\Hfrak] \\
                                     &= \sigma \beta \cdot \tau [\Hfrak].
  \end{align*}

  As for the converse, let $\alpha \in \HHbf^{1}_{\Afrak}$ be given and write $\alpha$ as a linear combination of the form
  \[ \alpha = \sum_{\Hfrak} c_{\Hfrak} \cdot [\Hfrak] \]
  where $\Hfrak$ varies over all hyperplanes in $\Afrak$ and $c_{\Hfrak} \in \Lambda$.
  Assume $\alpha$ satisfies Condition~\ref{condition}.
  We must show that $c_{\Hfrak} = 0$ for any $\Hfrak$ which is \emph{not} vertical over $\Abb^{m+1}_{\wbf}$.
  Let $\Hfrak$ be such a hyperplane, and take $\beta = [\Hfrak_{w}]$.
  Let $\sigma$ be the unique element of $\Pibf^{a}_{\Afrak}$ acting on Kummer classes of hyperplanes as the indicator function of $[\Hfrak]$, and note that $\sigma$ generates $\Ibf_{\Hfrak}$.
  Note that $\Hfrak \cap \Hfrak_{w}$ is a hyperplane in $\Hfrak$ which is not of the form $\Hfrak \cap \Hfrak'$ for any hyperplane $\Hfrak'$ which is vertical over $\Abb^{m}_{\zbf}$.
  Let $\bar\tau \in \Pibf^{a}_{\Hfrak}$ be an element which satisfies $\bar\tau [\Hfrak \cap \Hfrak_{w}] = 1$ and $\bar\tau [\Hfrak \cap \Hfrak'] = 0$ for all $\Hfrak'$ vertical over $\Abb^{m}_{\zbf}$.
  Choose $\tau \in \Dbf_{\Hfrak}$ mapping to $\bar\tau$ under the projection $\Dbf_{\Hfrak} \twoheadrightarrow \Pibf^{a}_{\Hfrak}$.
  Recall that $[\sigma,\tau] = 0$ by Fact~\ref{fact:decomp-alt-pair}, while both $\sigma$ and $\tau$ pair trivially with the image of $\iotabf_{\zbf} : \HHbf^{1}_{\zbf} \to \HHbf^{1}_{\Afrak}$.
  Condition~\ref{condition} can therefore be applied with this $\beta$, $\sigma$ and $\tau$, and the equation appearing in this condition reduces to $c_{\Hfrak} = 0$, as required.
\end{proof}

Proposition~\ref{proposition:linear-proj} shows that the image of $\iotabf_{\zbf} : \HHbf^{1}_{\zbf} \hookrightarrow \HHbf^{1}_{\Afrak}$ is invariant under the action of $\Gfrak$ for any partial system of coordinates $\zbf$ contained in $\xbf$.
We define an action of $\Gfrak$ on $\HHbf^{1}_{\zbf}$ via this inclusion, and this allows us to define an action of $\Gfrak$ on $\Pibf^{a}_{\zbf}$ by identifying it with $\Hom(\HHbf^{1}_{\zbf},\Lambda)$ via~\eqref{eqn:univ-H1-pairing}.
The canonical projection $\pibf_{\zbf} : \Pibf^{a}_{\Afrak} \to \Pibf^{a}_{\zbf}$ is then $\Gfrak$-equivariant by definition.

\subsection{The action on hyperplanes}

We now construct an action of $\Gfrak$ on the hyperplanes in $\Afrak$ which is compatible with decomposition theory.
For the rest of the proof, we write $\Planes := \Planes_{\Afrak}$.
\begin{proposition}
   There is an action of $\Gfrak$ on $\Planes$ which is uniquely determined by the condition that for $\gamma \in \Gfrak$ and a hyperplane $\Hfrak$, one has $\gamma \Ibf_{\Hfrak} = \Ibf_{\gamma \Hfrak}$, and $\gamma \Dbf_{\Hfrak} = \Dbf_{\gamma \Hfrak}$.
\end{proposition}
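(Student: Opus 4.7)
The plan is to use Theorem~\ref{theorem:local-theory} to give a $\Gfrak$-invariant characterization of the pairs $(\Ibf_\Hfrak, \Dbf_\Hfrak)$, from which the action on $\Planes$ will follow. Uniqueness is immediate from the explicit description of inertia in §\ref{subsec:inertia-decomp}: any generator $\sigma$ of $\Ibf_\Hfrak$ satisfies $\sigma[\Hfrak] \in \Lambda^\times$ while vanishing on $[\Hfrak_1]$ for every $\Hfrak_1 \neq \Hfrak$, so distinct hyperplanes have distinct inertia groups. Consequently, if an action exists, then $\gamma\cdot\Hfrak$ is forced to be the unique $\Hfrak'$ with $\Ibf_{\Hfrak'} = \gamma\Ibf_\Hfrak$.

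For existence, fix $\gamma\in\Gfrak$ and $\Hfrak\in\Planes$, and choose an index $i\in\{1,\ldots,n\}$ such that $\Hfrak$ dominates $\Abb^{n-1}_\zbf$, where $\zbf$ is the partial system of coordinates obtained from $\xbf$ by deleting $x_i$; such an $i$ exists for every hyperplane in $\Afrak$. The forward direction of Theorem~\ref{theorem:local-theory} supplies a generator $\sigma$ of $\Ibf_\Hfrak$, with $\sigma\notin\ell\Pibf^a_\Afrak$ and $\Dbf_\Hfrak=\Ccalbf(\sigma)$, satisfying conditions (1)--(3) of the theorem relative to $\zbf$. Put $\sigma':=\gamma\sigma$. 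Then $\gamma\Ibf_\Hfrak=\Lambda\cdot\sigma'$, and since elements of $\Gfrak$ preserve the commutator relation on $\Pibf^a_\Afrak$ (because they lift to automorphisms of $\Pibf^c_\Afrak$), we also have $\gamma\Dbf_\Hfrak=\Ccalbf(\sigma')$. Conditions (2) and (3) transfer to $\sigma'$ immediately from the $\Gfrak$-equivariance of $\pibf_\zbf$ established via Proposition~\ref{proposition:linear-proj}. Once condition (1) is also verified for $\sigma'$, the reverse direction of Theorem~\ref{theorem:local-theory} produces a hyperplane $\Hfrak'$ (again dominating $\Abb^{n-1}_\zbf$) satisfying $(\Ibf_{\Hfrak'},\Dbf_{\Hfrak'})=(\gamma\Ibf_\Hfrak,\gamma\Dbf_\Hfrak)$, and we set $\gamma\cdot\Hfrak:=\Hfrak'$. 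Compatibility with the group law $\gamma\cdot(\delta\cdot\Hfrak)=(\gamma\delta)\cdot\Hfrak$ is then automatic from the uniqueness observation above.

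The crux is thus the verification of condition (1) for $\sigma'$: we must exhibit some hyperplane $\Hfrak'$ dominating $\Abb^{n-1}_\zbf$ with $\sigma'[\Hfrak']=0$. I expect this to be the delicate point, since condition (1) is the one item of Theorem~\ref{theorem:local-theory} that refers to Kummer classes of individual hyperplanes rather than to $\Gfrak$-invariant subspaces of $\HHbf^1_\Afrak$. The natural approach is to exploit that the original $\sigma$ vanishes on every Kummer class except $[\Hfrak]$, and to combine this with the fact that $\gamma$ acts on each finite-rank cohomology group $\HH^1(\Ucal_\Hbf)$ and preserves $\iotabf_\zbf(\HHbf^1_\zbf)$, in order to run a cofinality/genericity argument over arrangements $\Hbf$ containing $\Hfrak$ together with many dominating hyperplanes and locate a suitable $\Hfrak'$. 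An alternative route would be to derive condition (1) indirectly from conditions (2) and (3) by exploiting the constraints that Lemma~\ref{lemma:alt-pairs} places on pairings $\tau[\Hfrak_1]$ for $\tau\in\Ccalbf(\sigma')$, ruling out the possibility that $\sigma'$ pairs nontrivially with every dominating Kummer class.
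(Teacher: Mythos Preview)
Your overall strategy matches the paper's, and the reductions to Theorem~\ref{theorem:local-theory}, uniqueness via inertia groups, and the transfer of conditions (2) and (3) via the $\Gfrak$-equivariance of $\pibf_\zbf$ are all correct. The gap is exactly where you flag it: condition~(1). Neither of your two suggested routes is likely to succeed. The cofinality/genericity idea only tells you that $\gamma$ preserves each finite-rank $\HH^1(\Ucal_\Hbf)$ and the vertical subspace $\iotabf_\zbf(\HHbf^1_\zbf)$, but it gives no control over the $[\Hfrak]$-coefficient of $\gamma^{-1}[\Hfrak']$ for a dominating $\Hfrak'$; nothing at this level prevents $\gamma^{-1}$ from spreading every dominating Kummer class across $[\Hfrak]$. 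The second route, deducing (1) from (2) and (3) via Lemma~\ref{lemma:alt-pairs}, is also too optimistic: conditions (2) and (3) are phrased entirely in terms of the vertical projection $\pibf_\zbf$ and do not single out any particular dominating hyperplane.

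The ingredient you are missing is the one piece of the structure of $\Hscr_{\xbf,S}$ you have not yet used: the morphisms $\pi_w : \Ucal_\Hbf \to \Pbb^1\smin\{0,1,\infty\}$ for $w$ of the form $x_i - s$ or $x_r - x_s$, or dually the maps $\iota_w : \HH^1(\Pbb^1\smin\{0,1,\infty\}) \to \HHbf^1_\Afrak$, whose images are the rank-two submodules $\Lambda[\Hfrak_w]\oplus\Lambda[\Hfrak_{w-1}]$. These are $\Gfrak$-equivariant by the very definition of $\Gfrak$, so $\gamma$ preserves each such rank-two submodule. The paper then argues by cases: if $\Hfrak \in \{\Hfrak_{x_i}, \Hfrak_{x_i-1}\}$, pick any $j \neq i$; since $\sigma$ vanishes on $[\Hfrak_{x_i - x_j}]$ and $[\Hfrak_{x_i - x_j - 1}]$, so does $\sigma' = \gamma\sigma$, and $\Hfrak' := \Hfrak_{x_i - x_j}$ dominates $\Abb^{n-1}_\zbf$. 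Otherwise $\sigma$ vanishes on $[\Hfrak_{x_i}]$ and $[\Hfrak_{x_i-1}]$, hence so does $\sigma'$, and $\Hfrak' := \Hfrak_{x_i}$ works. This is precisely the step in the whole argument where the extra morphisms to $\Pbb^1\smin\{0,1,\infty\}$ in $\Hscr_{\xbf,S}$ earn their keep; it cannot be replaced by reasoning internal to $\Pibf^a_\Afrak$ and $\pibf_\zbf$ alone.
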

\begin{proof}
  Let $\Hfrak$ be a hyperplane in $\Afrak$ and $\gamma \in \Gfrak$ be given.
  Since two hyperplanes are equal if and only if their inertia groups are equal, it suffices to show that there exists a hyperplane $\Hfrak_{0}$ such that $\gamma \Ibf_{\Hfrak} = \Ibf_{\Hfrak_{0}}$ and $\gamma \Dbf_{\Hfrak} = \Dbf_{\Hfrak_{0}}$.

  We will use Theorem~\ref{theorem:local-theory} to do this.
  Let $\sigma' \in \Ibf_{\Hfrak}$ be a generator and put $\sigma := \gamma \sigma'$.
  Let $\zbf$ be a partial system of coordinates obtained by deleting one of the $x_{i}$ such that $\Hfrak$ dominates $\Abb^{n-1}_{\zbf}$.
  By Theorem~\ref{theorem:local-theory}, we have $\Dbf_{\Hfrak} = \Ccalbf(\sigma')$, and the following conditions hold:
  \begin{enumerate}
    \item For every hyperplane $\Hfrak' \neq \Hfrak$, one has $\sigma'[\Hfrak'] = 0$.
    \item The element $\sigma'$ maps trivially under $\pibf_{\zbf} : \Pibf^{a}_{\Afrak} \to \Pibf^{a}_{\zbf}$.
    \item The map $\Ccalbf(\sigma')/\Lambda \cdot \sigma' \to \Pibf^{a}_{\zbf}$ induced by $\pibf_{\zbf}$ is an isomorphism.
  \end{enumerate}
  Since the action of $\gamma$ is compatible with $[-,-] = 0$, we have $\gamma \Dbf_{\Hfrak} = \Ccalbf(\sigma)$.
  Also, since $\pibf_{\zbf} : \Pibf^{a}_{\Afrak} \to \Pibf^{a}_{\zbf}$ is $\Gfrak$-equivariant as noted above, we see that $\sigma$ maps trivially under $\pibf_{\zbf}$ and that the induced map $\Ccalbf(\sigma)/\Lambda \cdot \sigma \to \Pibf^{a}_{\zbf}$ is an isomorphism.

  To conclude using Theorem~\ref{theorem:local-theory}, we must therefore show that there exists some hyperplane $\Hfrak'$ dominating $\Abb^{n-1}_{\zbf}$ such that $\sigma[\Hfrak'] = 0$.
  If $\Hfrak$ has the form $\Hfrak_{x_{j}}$ or $\Hfrak_{x_{j}-1}$, then $\sigma'$ pairs trivially with both $[\Hfrak_{x_{r}-x_{s}}]$ and $[\Hfrak_{x_{r}-x_{s}-1}]$ for every $1 \le r < s \le n$, hence the same is true for $\sigma$ since
  \[ \iota_{x_{r}-x_{s}} : \HH^{1}(\Pbb^{1} \smin \{0,1,\infty\}) \to \HHbf^{1}_{\Afrak} \]
  is $\gamma$-equivariant with image $\langle [\Hfrak_{x_{r}-x_{s}}],[\Hfrak_{x_{r}-x_{s}-1}] \rangle$.
  Otherwise $\sigma'$ pairs trivially with $[\Hfrak_{x_{j}}]$, $[\Hfrak_{x_{j}-1}]$, so the same argument shows that $\sigma$ pairs trivially with $[\Hfrak_{x_{j}}]$ and $[\Hfrak_{x_{j}-1}]$ as
  \[ \iota_{x_{j}} : \HH^{1}(\Pbb^{1} \smin \{0,1,\infty\}) \to \HHbf^{1}_{\Afrak} \]
  is also $\gamma$-equivariant.
  In any case, there exists some hyperplane $\Hfrak'$ dominating $\Abb^{n-1}_{\zbf}$ such that $\sigma[\Hfrak'] = 0$, and this concludes the proof using Theorem~\ref{theorem:local-theory}.
\end{proof}

Recall that the pairing~\eqref{eqn:univ-H1-pairing} is $\Gfrak$-equivariant, where $\Gfrak$ acts trivially on $\Lambda$, while the orthogonal of $\Ibf_{\Hfrak}$ resp.~$\Dbf_{\Hfrak}$ with respect to this pairing is $\Ubf_{\Hfrak}$ resp.~$\Ubf_{\Hfrak}^{1}$.
We therefore deduce the following lemma as well.

\begin{lemma}\label{lemma:action-Ubf}
  Let $\gamma \in \Gfrak$, $\Hfrak \in \Planes$ be given.
  One has $\gamma \Ubf_{\Hfrak} = \Ubf_{\gamma \Hfrak}$ and $\gamma \Ubf_{\Hfrak}^{1} = \Ubf_{\gamma \Hfrak}^{1}$.
\end{lemma}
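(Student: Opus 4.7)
The plan is to deduce this lemma directly from duality, using the previous proposition and the $\Gfrak$-equivariance of the pairing~\eqref{eqn:univ-H1-pairing} noted in \S\ref{subsec:action-fund-group}. Recall from \S\ref{subsec:inertia-decomp} that $\Ubf_{\Hfrak}$ is by definition the orthogonal of $\Ibf_{\Hfrak}$, and $\Ubf_{\Hfrak}^{1}$ is the orthogonal of $\Dbf_{\Hfrak}$, with respect to the pairing $\Pibf^{a}_{\Afrak}\times\HHbf^{1}_{\Afrak}\to\Lambda$. By the immediately preceding proposition, we also have $\gamma\Ibf_{\Hfrak}=\Ibf_{\gamma\Hfrak}$ and $\gamma\Dbf_{\Hfrak}=\Dbf_{\gamma\Hfrak}$.

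First I would record the one computation that makes everything work: since $\Gfrak$ acts trivially on $\Lambda$, equivariance of the pairing means $(\gamma\sigma)(\gamma\alpha)=\sigma\alpha$ for all $\sigma\in\Pibf^{a}_{\Afrak}$, $\alpha\in\HHbf^{1}_{\Afrak}$, and $\gamma\in\Gfrak$. Consequently, for any submodule $M\subset\Pibf^{a}_{\Afrak}$, if we write $M^{\perp}\subset\HHbf^{1}_{\Afrak}$ for its orthogonal, then $\gamma(M^{\perp})=(\gamma M)^{\perp}$: indeed $\alpha\in M^{\perp}$ iff $\sigma\alpha=0$ for all $\sigma\in M$ iff $(\gamma\sigma)(\gamma\alpha)=0$ for all $\sigma\in M$ iff $\gamma\alpha$ annihilates $\gamma M$.

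Applying this observation with $M=\Ibf_{\Hfrak}$ yields
\[ \gamma\Ubf_{\Hfrak}=\gamma(\Ibf_{\Hfrak}^{\perp})=(\gamma\Ibf_{\Hfrak})^{\perp}=\Ibf_{\gamma\Hfrak}^{\perp}=\Ubf_{\gamma\Hfrak}, \]
and applying it with $M=\Dbf_{\Hfrak}$ yields $\gamma\Ubf_{\Hfrak}^{1}=\Ubf_{\gamma\Hfrak}^{1}$ in the same way. There is no real obstacle here; the lemma is a formal consequence of the previous proposition once one observes that orthogonal complements under a $\Gfrak$-equivariant pairing are transported covariantly by $\Gfrak$. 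The only point to double-check is the perfectness (or at least, the appropriate non-degeneracy) of~\eqref{eqn:univ-H1-pairing} ensuring $M^{\perp\perp}=M$ for the submodules in question, which is already built into the construction of $\Ibf_{\Hfrak}$ and $\Dbf_{\Hfrak}$ as the orthogonals of $\Ubf_{\Hfrak}$ and $\Ubf_{\Hfrak}^{1}$ in \S\ref{subsec:inertia-decomp}.
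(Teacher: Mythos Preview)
Your proof is correct and is essentially the same as the paper's: the paper simply observes, just before stating the lemma, that the pairing~\eqref{eqn:univ-H1-pairing} is $\Gfrak$-equivariant and that $\Ubf_{\Hfrak}$ resp.~$\Ubf_{\Hfrak}^{1}$ is the orthogonal of $\Ibf_{\Hfrak}$ resp.~$\Dbf_{\Hfrak}$, and deduces the lemma from the previous proposition exactly as you do. Your write-up just makes the formal step $\gamma(M^{\perp})=(\gamma M)^{\perp}$ and the double-orthogonal check explicit.
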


We will also need the following variant involving Kummer classes.
\begin{lemma}\label{lemma:action-Kummer-classes}
  Let $\gamma \in \Gfrak$ and $\Hfrak \in \Planes$ be given.
  Then one has $\Lambda \cdot \gamma[\Hfrak] = \Lambda \cdot [\gamma \Hfrak]$.
\end{lemma}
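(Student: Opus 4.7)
The plan is to reduce the assertion to a clean set-theoretic identity by giving an intrinsic description of the cyclic submodule $\Lambda \cdot [\Hfrak]$ inside $\HHbf^{1}_{\Afrak}$ purely in terms of the submodules $\Ubf_{\Hfrak'}$, for then Lemma~\ref{lemma:action-Ubf} transports the description directly.

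Concretely, the first step is to observe that $\HHbf^{1}_{\Afrak}$ is a free $\Lambda$-module with basis $\{[\Hfrak']\}_{\Hfrak' \in \Planes}$ (as recorded in \S\ref{subsec:univ-case-cohom}), and that the residue maps are dual to this basis in the sense that for $\alpha = \sum_{\Hfrak'} c_{\Hfrak'}[\Hfrak'] \in \HHbf^{1}_{\Afrak}$ one has $\partial_{\Hfrak''}(\alpha) = c_{\Hfrak''}$. Since $\Ubf_{\Hfrak'} = \ker(\partial_{\Hfrak'})$, it follows immediately that
\[
\Lambda \cdot [\Hfrak] \;=\; \bigcap_{\Hfrak' \in \Planes,\;\Hfrak' \ne \Hfrak} \Ubf_{\Hfrak'},
\]
because an element $\alpha$ lies in the right-hand side precisely when $c_{\Hfrak'} = 0$ for every $\Hfrak' \ne \Hfrak$.

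The second step is then purely formal. Because $\gamma$ acts $\Lambda$-linearly on $\HHbf^{1}_{\Afrak}$, we have $\gamma(\Lambda \cdot [\Hfrak]) = \Lambda \cdot \gamma[\Hfrak]$. Since $\gamma$ acts by a bijection it commutes with intersections of subsets, so using Lemma~\ref{lemma:action-Ubf} we get
\[
\gamma\!\left(\bigcap_{\Hfrak' \ne \Hfrak} \Ubf_{\Hfrak'}\right) \;=\; \bigcap_{\Hfrak' \ne \Hfrak} \Ubf_{\gamma \Hfrak'}.
\]
Finally, because the action of $\gamma$ on $\Planes$ is itself a bijection, the index set $\{\gamma\Hfrak' : \Hfrak' \ne \Hfrak\}$ coincides with $\{\Hfrak'' : \Hfrak'' \ne \gamma\Hfrak\}$, so the intersection equals $\Lambda \cdot [\gamma\Hfrak]$ by the first step applied to $\gamma\Hfrak$. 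Chaining these equalities yields the claim.

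There is essentially no obstacle: the only non-formal input is the characterization $\Lambda \cdot [\Hfrak] = \bigcap_{\Hfrak' \ne \Hfrak} \Ubf_{\Hfrak'}$, which is a direct consequence of the freeness of $\HHbf^{1}_{\Afrak}$ on the Kummer basis and the residue formulas of \S\ref{subsec:univ-residue}. Everything else is the observation that a bijective $\Lambda$-linear map commutes with the constructions $\Lambda \cdot (-)$ and $\bigcap$, together with the bijectivity of the $\Gfrak$-action on $\Planes$ already established.
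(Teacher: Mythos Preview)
Your argument is correct and is essentially the same as the paper's own proof: the paper records the identity $\Lambda \cdot [\Hfrak] = \bigcap_{\Hfrak' \neq \Hfrak} \Ubf_{\Hfrak'}$ and then invokes Lemma~\ref{lemma:action-Ubf}, exactly as you do. Your write-up just makes the routine verifications explicit.
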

\begin{proof}
  Note that
  \[ \Lambda \cdot [\Hfrak] = \bigcap_{\Hfrak' \neq \Hfrak} \Ubf_{\Hfrak'}. \]
  The lemma therefore follows from Lemma~\ref{lemma:action-Ubf}.
\end{proof}

And finally, we will need the following lemma showing compatibility with parallel pairs and dependent triples.
\begin{lemma}\label{lemma:colin-helper}
  Let $\gamma \in \Gfrak$ and $\Hfrak_{1},\Hfrak_{2},\Hfrak_{3} \in \Planes$ be given.
  The following hold:
  \begin{enumerate}
    \item If $(\Hfrak_{1},\Hfrak_{2})$ is a parallel pair then so is $(\gamma \Hfrak_{1},\gamma \Hfrak_{2})$.
    \item If $(\Hfrak_{1},\Hfrak_{2},\Hfrak_{3})$ is a dependent triple then so is $(\gamma \Hfrak_{1},\gamma \Hfrak_{2},\gamma \Hfrak_{3})$.
  \end{enumerate}
\end{lemma}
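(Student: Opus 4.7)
The plan is to characterize parallel pairs and dependent triples by the membership of certain Kummer classes in the submodules $\Ubf^1_{\Hfrak}$, and then transport these combinatorial conditions under the action of $\gamma$ using the previous two lemmas. The key ingredients are: Lemma~\ref{lemma:action-Kummer-classes}, which yields $\gamma[\Hfrak] = \lambda_{\Hfrak}\cdot[\gamma\Hfrak]$ for some $\lambda_{\Hfrak}\in\Lambda^{\times}$ (the unit property follows by applying the same lemma to $\gamma^{-1}$); Lemma~\ref{lemma:action-Ubf}, which gives $\gamma\Ubf^1_{\Hfrak} = \Ubf^1_{\gamma\Hfrak}$; and the fact that $\gamma$ acts bijectively on $\Planes$ since $\gamma^{-1}$ does the same.

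For part~(1), I would first record the intrinsic characterization: $(\Hfrak_1,\Hfrak_2)$ is a parallel pair if and only if $\Hfrak_1\neq\Hfrak_2$ and $[\Hfrak_2]\in\Ubf^1_{\Hfrak_1}$. Indeed, the formulas of \S\ref{subsec:univ-residue} give $\partial_{\Hfrak_1}[\Hfrak_2]=0$ whenever $\Hfrak_2\neq\Hfrak_1$, while $\sfrak_{\Hfrak_1}[\Hfrak_2]=[\Hfrak_1\cap\Hfrak_2]$ vanishes precisely when $\Hfrak_1\cap\Hfrak_2=\emptyset$, also in view of Lemma~\ref{lemma:cup-product-relations}(1). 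Given this characterization, applying $\gamma$ to $[\Hfrak_2]\in\Ubf^1_{\Hfrak_1}$ yields $\lambda_{\Hfrak_2}\cdot[\gamma\Hfrak_2]\in\Ubf^1_{\gamma\Hfrak_1}$, and dividing by the unit $\lambda_{\Hfrak_2}$ (together with $\gamma\Hfrak_1\neq\gamma\Hfrak_2$) concludes the argument.

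For part~(2), the starting point is that by Fact~\ref{fact:Ubf-description}(2)(b), any dependent triple $(\Hfrak_1,\Hfrak_2,\Hfrak_3)$ satisfies $[\Hfrak_1]-[\Hfrak_2]\in\Ubf^1_{\Hfrak_3}$. Applying $\gamma$ gives
\[ \lambda_{\Hfrak_1}\cdot[\gamma\Hfrak_1] - \lambda_{\Hfrak_2}\cdot[\gamma\Hfrak_2] \in \Ubf^1_{\gamma\Hfrak_3}, \]
and then applying $\sfrak_{\gamma\Hfrak_3}$ yields
\[ \lambda_{\Hfrak_1}\cdot[\gamma\Hfrak_3\cap\gamma\Hfrak_1] = \lambda_{\Hfrak_2}\cdot[\gamma\Hfrak_3\cap\gamma\Hfrak_2] \]
inside $\HHbf^1_{\gamma\Hfrak_3}$, where empty intersections contribute zero. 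Since a dependent triple has nonempty common intersection, no pair among $\Hfrak_1,\Hfrak_2,\Hfrak_3$ is parallel; by part~(1) applied to $\gamma$ and to $\gamma^{-1}$, the same holds for the images, so both $\gamma\Hfrak_3\cap\gamma\Hfrak_1$ and $\gamma\Hfrak_3\cap\gamma\Hfrak_2$ are honest hyperplanes of $\gamma\Hfrak_3$ whose Kummer classes are basis elements of $\HHbf^1_{\gamma\Hfrak_3}$. The displayed relation with $\lambda_{\Hfrak_i}\in\Lambda^{\times}$ then forces $\gamma\Hfrak_3\cap\gamma\Hfrak_1=\gamma\Hfrak_3\cap\gamma\Hfrak_2$; this common codimension-two subvariety lies in all three of $\gamma\Hfrak_1,\gamma\Hfrak_2,\gamma\Hfrak_3$, exhibiting them as a dependent triple.

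The main obstacle is the appearance of the potentially distinct unit scalars $\lambda_{\Hfrak_1}$ and $\lambda_{\Hfrak_2}$ when transporting $[\Hfrak_1]-[\Hfrak_2]$ under $\gamma$: a priori these spoil any naive transport of the identity. The resolution, sketched above, is that the argument only requires the resulting relation to hold up to unit coefficients on the two basis directions, which is exactly what is available once part~(1) ensures that the intersection classes $[\gamma\Hfrak_3\cap\gamma\Hfrak_i]$ are genuine, distinct-or-equal basis Kummer classes of $\HHbf^1_{\gamma\Hfrak_3}$.
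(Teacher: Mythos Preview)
Your proposal is correct and follows essentially the same approach as the paper. The paper phrases the characterization for part~(2) more succinctly as ``the images of $\Lambda\cdot[\Hfrak_{1}]$ and $\Lambda\cdot[\Hfrak_{2}]$ are nontrivial and agree in $\Ubf_{\Hfrak_{3}}/\Ubf_{\Hfrak_{3}}^{1}$,'' which, combined with Lemmas~\ref{lemma:action-Ubf} and~\ref{lemma:action-Kummer-classes}, transports directly under $\gamma$ without ever naming the unit scalars $\lambda_{\Hfrak_{i}}$; your version simply unwinds this via the specialization map $\sfrak_{\gamma\Hfrak_{3}}$ into $\HHbf^{1}_{\gamma\Hfrak_{3}}$, which amounts to the same thing.
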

\begin{proof}
  Both assertions follow from Lemmas~\ref{lemma:action-Ubf} and~\ref{lemma:action-Kummer-classes}, the explicit descriptions of $\Ubf_{\Hfrak}$ and $\Ubf_{\Hfrak}^{1}$, and the explicit description of $\sfrak_{\Hfrak}$, using the following characterizations.
  First, a pair of distinct hyperplanes $(\Hfrak_{1},\Hfrak_{2})$ is a parallel pair if and only of $\Lambda \cdot [\Hfrak_{1}] \subset \Ubf^{1}_{\Hfrak_{2}}$.
  Second, a triple of distinct hyperplanes $(\Hfrak_{1},\Hfrak_{2},\Hfrak_{3})$ is a dependent triple if and only if the images of $\Lambda \cdot [\Hfrak_{1}]$ and of $\Lambda \cdot [\Hfrak_{2}]$ are nontrivial and agree in the quotient $\Ubf_{\Hfrak_{3}}/\Ubf_{\Hfrak_{3}}^{1}$.
\end{proof}

\subsection{The dual projective space}

Given $\Hfrak \in \Planes$, let $\pfrak(\Hfrak) = (a_{0}:\cdots:a_{n}) \in \Pbb^{n}(k)$ denote the unique point, written in homogeneous coordinates, where $\Hfrak$ is the zero-locus of $a_{0} + a_{1} \cdot x_{1} + \cdots + a_{n} \cdot x_{n}$.
Note that $\pfrak : \Planes \to \Pbb^{n}(k)$ is a well-defined embedding whose image consists of every point of $\Pbb^{n}(k)$, except for the point $\pfrak_{0} := (1:0:\cdots:0)$.
We define an action of $\Gfrak$ on $\Pbb^{n}(k)$ in such a way so that $\gamma \in \Gfrak$ acts as $\gamma \pfrak_{0} = \pfrak_{0}$ and $\gamma \pfrak(\Hfrak) = \pfrak(\gamma \Hfrak)$ for every $\Hfrak \in \Planes$.

In other words, if we identify $\Afrak$ with $\Abb^{n}_{\xbf}$ via our choice of coordinates, and further embed $\Abb^{n}_{\xbf}$ in $\Pbb^{n}_{\xbf}$ in the usual way, then we are identifying the hyperplanes in $\Pbb^{n}_{\xbf}$ with the points in the \emph{dual projective space}, and $\pfrak_{0}$ corresponds to the hyperplane at infinity.
Note that the lines in this dual projective space correspond to pencils of hyperplanes in $\Pbb^{n}_{\xbf}$.
The following proposition shows that the action of $\Gfrak$ on this dual projective space respects such lines.
Recall that a bijection $\gamma : \Pbb^{n}(k) \to \Pbb^{n}(k)$ is called a \emph{collineation} provided that for all projective lines $\lfrak$ in $\Pbb^{n}(k)$, the image $\gamma(\lfrak)$ is another projective line.

\begin{proposition}\label{proposition:colineations}
  The action of $\Gfrak$ on $\Pbb^{n}(k)$ defined above acts by collineations.
\end{proposition}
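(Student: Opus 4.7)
The plan is to reduce the proposition to a case analysis governed entirely by Lemma~\ref{lemma:colin-helper}. Specifically, I would show that each $\gamma \in \Gfrak$ preserves collinearity on $\Pbb^{n}(k)$, i.e., that any three distinct collinear points are sent to three collinear points. Since $\gamma^{-1} \in \Gfrak$ defines an inverse action, $\gamma$ is automatically bijective, and the same preservation argument applied to $\gamma^{-1}$ then upgrades ``collinearity preserving'' to ``sends lines bijectively onto lines'', which is precisely the statement that $\gamma$ acts as a colineation.

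The bridge between the action on hyperplanes and the dual projective geometry is the following dictionary. Under the embedding $\pfrak$, lines in $\Pbb^{n}(k)$ correspond to pencils of hyperplanes in $\Pbb^{n}_{\xbf}$. Three distinct points $\pfrak(\Hfrak_{1}), \pfrak(\Hfrak_{2}), \pfrak(\Hfrak_{3})$ are collinear precisely when either $(\Hfrak_{1},\Hfrak_{2},\Hfrak_{3})$ is a dependent triple (their common line in the dual avoids $\pfrak_{0}$, so the three affine hyperplanes share a common codimension-two affine subspace in $\Afrak$), or the three hyperplanes are pairwise parallel (the common line in the dual passes through $\pfrak_{0}$). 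Similarly, two distinct hyperplane-points $\pfrak(\Hfrak_{1}),\pfrak(\Hfrak_{2})$ are collinear with $\pfrak_{0}$ exactly when $(\Hfrak_{1},\Hfrak_{2})$ is a parallel pair.

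With this dictionary, I would distinguish three subcases according to the position of $\pfrak_{0}$ relative to the three collinear points $p_{1},p_{2},p_{3}$. If one of them, say $p_{3}$, equals $\pfrak_{0}$, then $p_{1},p_{2}$ correspond to a parallel pair $(\Hfrak_{1},\Hfrak_{2})$, which Lemma~\ref{lemma:colin-helper}(1) carries to a parallel pair; hence the line through $\gamma p_{1}$ and $\gamma p_{2}$ passes through $\pfrak_{0} = \gamma\pfrak_{0} = \gamma p_{3}$. If none of the $p_{i}$ equals $\pfrak_{0}$ but their common line passes through $\pfrak_{0}$, then the hyperplanes $\Hfrak_{1},\Hfrak_{2},\Hfrak_{3}$ are pairwise parallel; applying Lemma~\ref{lemma:colin-helper}(1) to each pair, the three lines through pairs of the $\gamma p_{i}$ all pass through $\pfrak_{0}$, forcing them to coincide and yielding the collinearity of the $\gamma p_{i}$. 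Finally, if the common line of the $p_{i}$ avoids $\pfrak_{0}$, then $(\Hfrak_{1},\Hfrak_{2},\Hfrak_{3})$ is a dependent triple, and Lemma~\ref{lemma:colin-helper}(2) produces a dependent triple $(\gamma\Hfrak_{1},\gamma\Hfrak_{2},\gamma\Hfrak_{3})$, whose points are collinear by the dictionary.

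There is no real obstacle in this argument beyond cleanly formulating the above dictionary between dual-projective collinearity and the parallel-pair/dependent-triple language: the work of actually moving across $\gamma$ has been done in Lemma~\ref{lemma:colin-helper}, and the role of $\pfrak_{0}$ as the hyperplane at infinity is the only subtlety that requires the split into subcases.
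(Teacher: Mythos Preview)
Your proposal is correct and follows essentially the same approach as the paper: both reduce the colineation property to Lemma~\ref{lemma:colin-helper} via the dictionary between dual-projective collinearity and the parallel-pair/dependent-triple language. The only difference is organizational: the paper splits into two cases (lines through $\pfrak_{0}$ versus lines not through $\pfrak_{0}$), while you further subdivide the first case according to whether one of the three given points actually equals $\pfrak_{0}$, which makes the argument slightly more explicit but is otherwise the same.
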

\begin{proof}
  This follows directly from Lemma~\ref{lemma:colin-helper}, as follows.
  A triple of points of the form $(\pfrak_{0},\pfrak(\Hfrak_{1}),\pfrak(\Hfrak_{2}))$ is colinear in $\Pbb^{n}(k)$ if and only if $(\Hfrak_{1},\Hfrak_{2})$ is a parallel pair.
  Thus elements of $\Gfrak$ send any line passing through $\pfrak_{0}$ to another line passing through $\pfrak_{0}$.
  On the other hand, a triple of points of the form $(\pfrak(\Hfrak_{1}),\pfrak(\Hfrak_{2}),\pfrak(\Hfrak_{3}))$, no two of which are colinear with $\pfrak_{0}$, are colinear in $\Pbb^{n}(k)$ if and only if $(\Hfrak_{1},\Hfrak_{2},\Hfrak_{3})$ is a dependent triple.
  Thus, elements of $\Gfrak$ send any line not passing through $\pfrak_{0}$ to another line which does not pass through $\pfrak_{0}$.
\end{proof}

\subsection{Rigidification}

We now prove the following proposition which will allow us to \emph{rigidify} the action of $\Gfrak$ on $\Pbb^{n}(k)$.
\begin{proposition}\label{proposition:rigidification}
  Let $(z,c) \in \{x_{1},\ldots,x_{n}\} \times S$ and $1 \le i < j \le n$ be given.
  The hyperplanes $\Hfrak_{z-c}$, $\Hfrak_{z-c-1}$, $\Hfrak_{x_{i}-x_{j}}$ and $\Hfrak_{x_{i}-x_{j}-1}$ are all fixed by the action of $\Gfrak$.
\end{proposition}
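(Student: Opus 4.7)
The plan is to exploit the $\Gfrak$-equivariance of the maps
\[ \iota_\varpi : \HH^1(\Pbb^1 \smin \{0,1,\infty\}) \to \HHbf^1_{\Afrak} \]
coming from the projections $\pi_\varpi$ in $\Hscr_{\xbf,S}$, where $\varpi$ ranges over $\{x_i - c : c \in S\}$ and $\{x_i - x_j : 1 \le i < j \le n\}$. The crucial observation is that all of these $\iota_\varpi$ factor through one and the same $\Gfrak$-module $\HH^1(\Pbb^1 \smin \{0,1,\infty\})$, since $\Pbb^1 \smin \{0,1,\infty\}$ is a single object of the category; this coordinates the action of a fixed $\gamma \in \Gfrak$ on all pairs $\{\Hfrak_\varpi,\Hfrak_{\varpi-1}\}$ simultaneously.

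More concretely, I would fix $\gamma \in \Gfrak$, let $\gamma^*$ denote the induced action on $\HH^1(\Pbb^1 \smin \{0,1,\infty\})$, and let $\alpha_0,\alpha_1$ be the basis of Kummer classes of $0$ and $1$; then $\iota_\varpi(\alpha_0)=[\Hfrak_\varpi]$ and $\iota_\varpi(\alpha_1)=[\Hfrak_{\varpi-1}]$ for every admissible $\varpi$. Writing $\gamma^*\alpha_0 = a\alpha_0 + b\alpha_1$ and $\gamma^*\alpha_1 = c\alpha_0 + d\alpha_1$ with $a,b,c,d \in \Lambda$ \emph{independent of} $\varpi$, equivariance gives
\[ \gamma[\Hfrak_\varpi] = a[\Hfrak_\varpi] + b[\Hfrak_{\varpi-1}], \qquad \gamma[\Hfrak_{\varpi-1}] = c[\Hfrak_\varpi] + d[\Hfrak_{\varpi-1}]. \]
By Lemma~\ref{lemma:action-Kummer-classes}, each of these expressions must be a unit multiple of a Kummer class of some hyperplane, and since distinct Kummer classes are linearly independent in $\HHbf^1_{\Afrak}$, each of $(a,b)$ and $(c,d)$ must contain exactly one zero entry and one unit entry. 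Combined with the invertibility of $\gamma^*$, this leaves exactly two possibilities, uniform in $\varpi$: either $b=c=0$, in which case $\gamma$ already fixes each $\Hfrak_\varpi$ and each $\Hfrak_{\varpi-1}$; or $a=d=0$, in which case $\gamma$ swaps every pair $\{\Hfrak_\varpi,\Hfrak_{\varpi-1}\}$.

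The remaining task, which I expect to be the only substantive point, is to rule out the swap case using Lemma~\ref{lemma:colin-helper}. Since $n \ge 2$ and $0 \in S$, the triple $(\Hfrak_{x_1},\Hfrak_{x_2},\Hfrak_{x_1-x_2})$ consists of admissible hyperplanes and is a dependent triple, because its defining equations cut out the codimension-$2$ subspace $\{x_1=x_2=0\}$. In the swap case, $\gamma$ would send this triple to $(\Hfrak_{x_1-1},\Hfrak_{x_2-1},\Hfrak_{x_1-x_2-1})$; but the latter's defining equations have no common solution (setting $x_1=x_2=1$ forces $x_1-x_2-1=-1\ne 0$), so the intersection is empty rather than of codimension $2$. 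This contradicts Lemma~\ref{lemma:colin-helper} and rules out the swap case, completing the argument.
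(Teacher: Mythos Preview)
Your argument is correct and follows the same overall shape as the paper's proof: first restrict $\gamma$ to a permutation of each pair $\{\Hfrak_\varpi,\Hfrak_{\varpi-1}\}$ via the $\Gfrak$-equivariance of $\iota_\varpi$ and Lemma~\ref{lemma:action-Kummer-classes}, then use Lemma~\ref{lemma:colin-helper} on suitable dependent triples to force the identity.

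The one genuine difference is your ``coordination'' observation: because all the $\iota_\varpi$ have the \emph{same} source $\HH^1(\Pbb^1\smin\{0,1,\infty\})$ with its single $\Gfrak$-action, the permutation of $\{\Hfrak_\varpi,\Hfrak_{\varpi-1}\}$ is the same for every admissible $\varpi$. This reduces the problem to a single dichotomy (all fixed vs.\ all swapped), which you then settle with one dependent triple. The paper does not make this uniformity explicit; instead it lists several dependent and non-dependent triples among $\Hfrak_{x_i-s}$, $\Hfrak_{x_i-s-1}$, $\Hfrak_{x_j-s}$, $\Hfrak_{x_j-s-1}$, $\Hfrak_{x_i-x_j}$, $\Hfrak_{x_i-x_j-1}$ and lets the resulting combinatorics pin down each permutation separately. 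Your route is a clean shortcut; the paper's route has the (very mild) advantage of not invoking the functorial structure beyond the image of each $\iota_\varpi$.
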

\begin{proof}
  For $\gamma \in \Gfrak$ and $\Hfrak \in \Planes$, note that $\gamma \Hfrak$ is the unique hyperplane such that $\gamma \Ibf_{\Hfrak}$ pairs nontrivially with $\Lambda \cdot [\gamma \Hfrak]$, while $\Lambda \cdot [\gamma \Hfrak] = \Lambda \cdot \gamma [\Hfrak]$ by Lemma~\ref{lemma:action-Kummer-classes}.
  For any $c \in S$ and any $1 \le i < j \le n$, the maps $\iota_{w} : \HH^{1}(\Pbb^{1} \smin \{0,1,\infty\}) \to \HHbf^{1}_{\Afrak}$ are $\Gfrak$-equivariant for $w \in \{x_{1}-c, \ldots, x_{n}-c, x_{i}-x_{j}\}$, with image spanned by $[\Hfrak_{w}]$ and $[\Hfrak_{w-1}]$.
  Hence $\gamma$ acts as a permutation of $\{\Hfrak_{w},\Hfrak_{w-1}\}$ for all such $w$.

  Note that for every $c \in S$ and $1 \le i < j \le n$, the following are all dependent triples:
  \[ (\Hfrak_{x_{i}-s},\Hfrak_{x_{j}-s},\Hfrak_{x_{i}-x_{j}}), (\Hfrak_{x_{i}-s-1},\Hfrak_{x_{j}-s-1},\Hfrak_{x_{i}-x_{j}}), \ (\Hfrak_{x_{i}-s-1},\Hfrak_{x_{j}-s},\Hfrak_{x_{i}-x_{j}-1}), \]
  while $(\Hfrak_{x_{i}-s},\Hfrak_{x_{j}-s-1},\Hfrak_{x_{i}-x_{j}-1})$ is \emph{not} a dependent triple.
  See Figure~\ref{figure:triples}.
  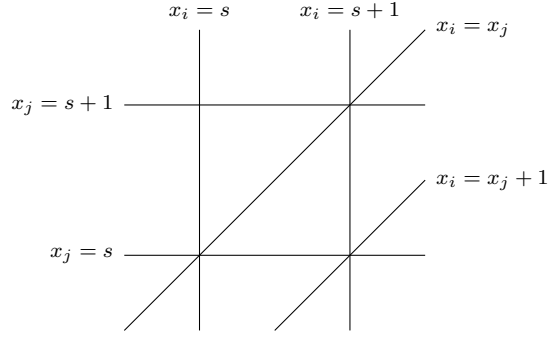
\begin{figure}
    \begin{tikzpicture}
      \draw (1,0) -- (1,4) node[anchor=south] {\tiny $x_{i} = s$};
      \draw (3,0) -- (3,4) node[anchor=south] {\tiny $x_{i} = s + 1$};
      \draw (0,1) node[anchor=east] {\tiny $x_{j} = s$} -- (4,1);
      \draw (0,3) node[anchor=east] {\tiny $x_{j} = s + 1$} -- (4,3);
      \draw (0,0) -- (4,4) node[anchor=west] {\tiny $x_{i} = x_{j}$};
      \draw (2,0) -- (4,2) node[anchor=west] {\tiny $x_{i} = x_{j} + 1$};
    \end{tikzpicture}
    \caption{Dependent triples in proof of Proposition~\ref{proposition:rigidification}}\label{figure:triples}
  \end{figure}
  The assertions all follow from these observations and Lemma~\ref{lemma:colin-helper}.
\end{proof}

\subsection{Compatibility with the field structure}

Recall that the \emph{general semilinear group} of $k^{n+1}$, denoted $\GSL(k^{n+1})$, is the group of semilinear automorphisms of $k^{n+1}$ as a $k$-module.
There is a natural map $k^{\times} \to \GSL(k^{n+1})$ sending $x \in k^{\times}$ to the associated homothety, with normal image, and we denote the quotient by the image of this map by $\PGSL(k^{n+1})$.

In light of Proposition~\ref{proposition:colineations}, the \emph{fundamental theorem of projective geometry}~\cite{artin} applied to this context be rephared as the assertion that that there exists a unique morphism
\[ \eta : \Gfrak \to \PGSL(k^{n+1}), \]
such that for all $\cbf = (c_{0},\ldots,c_{n}) \in k^{n+1}$ and all $\gamma \in \Gfrak$, the element $\eta(\gamma)(\cbf)$ represents $\gamma (c_{0} : \cdots : c_{n})$.
We will show that $\eta$ factors through the obvious inclusion $\Gal(k|k_{0}) \hookrightarrow \PGSL(k^{n+1})$.
Fix an element $\gamma \in \Gfrak$ for the rest of this subsection and let $\Gamma \in \GSL(k^{n+1})$ be a representative of $\eta(\gamma)$.
We also write $e_{0},\ldots,e_{n}$ for the standard basis of the $k$-module $k^{n+1}$.

\begin{lemma}\label{lemma:Gamma-epsilon}
  For all $i = 0,1,\ldots,n$, there exists some $\epsilon_{i} \in k^{\times}$ such that $\Gamma e_{i} = \epsilon_{i} \cdot e_{i}$.
\end{lemma}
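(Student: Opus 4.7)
The plan is to deduce this lemma directly from Proposition~\ref{proposition:rigidification} together with the translation between hyperplanes in $\Afrak$ and points in the dual projective space $\Pbb^{n}(k)$ that was set up just before Proposition~\ref{proposition:colineations}.

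First I would observe that, since $0 \in S$ by assumption, Proposition~\ref{proposition:rigidification} (applied with $c = 0$) tells us that each of the hyperplanes $\Hfrak_{x_{1}},\ldots,\Hfrak_{x_{n}}$ is fixed by $\gamma$. Next, I would translate this fact through the embedding $\pfrak : \Planes \hookrightarrow \Pbb^{n}(k)$. By the very definition of $\pfrak$, the hyperplane $\Hfrak_{x_{i}}$ (the zero locus of the linear form $x_{i}$) corresponds to the point whose homogeneous coordinates have a $1$ in the $i$-th slot and zero elsewhere; in the standard basis $e_{0},\ldots,e_{n}$ of $k^{n+1}$ this is exactly the class $[e_{i}] \in \Pbb^{n}(k)$ for $i = 1,\ldots,n$. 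Moreover, by construction, $\gamma$ fixes the excluded point $\pfrak_{0} = [e_{0}]$.

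Combining these two observations, $\gamma$ fixes each of the $n+1$ projective points $[e_{0}],[e_{1}],\ldots,[e_{n}]$. Since $\Gamma \in \GSL(k^{n+1})$ is a representative of $\eta(\gamma)$, the induced action of $\Gamma$ on $\Pbb^{n}(k)$ fixes each line $k \cdot e_{i}$. Therefore $\Gamma e_{i}$ lies in $k \cdot e_{i}$ and, being nonzero (as $\Gamma$ is invertible), can be written as $\Gamma e_{i} = \epsilon_{i} \cdot e_{i}$ for a unique $\epsilon_{i} \in k^{\times}$, which is the desired conclusion.

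There is essentially no obstacle here; this is a bookkeeping step which simply records, in the language of the semilinear representative $\Gamma$, the content of the rigidification already proven. The only small care is to note that the choice of representative $\Gamma$ of $\eta(\gamma) \in \GSL(k^{n+1})_{/k^{\times}}$ is only well-defined up to scaling by $k^{\times}$, which is harmless: rescaling $\Gamma$ rescales all the $\epsilon_{i}$ by the same element of $k^{\times}$, so the statement ``$\Gamma e_{i} \in k \cdot e_{i}$'' is representative-independent.
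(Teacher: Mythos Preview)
Your proposal is correct and follows essentially the same approach as the paper: identify $e_{0}$ with $\pfrak_{0}$ and $e_{i}$ with $\pfrak(\Hfrak_{x_{i}})$ for $i>0$, then invoke Proposition~\ref{proposition:rigidification} (with $c=0\in S$) and the definition of the $\Gfrak$-action to conclude that each $[e_{i}]$ is fixed, hence $\Gamma e_{i}\in k^{\times}\cdot e_{i}$. Your write-up is simply more explicit than the paper's two-line proof.
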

\begin{proof}
  Note that $e_{0}$ represents $\pfrak_{0}$ while, for $i > 0$, the element $e_{i}$ represents $\pfrak(\Hfrak_{x_{i}})$.
  The assertion follows from the fact that $\gamma$ fixes $\pfrak_{0}$ and $\pfrak(\Hfrak_{x_{i}})$, which follows from the definition and from Proposition~\ref{proposition:rigidification}.
\end{proof}

\begin{lemma}\label{lemma:epsilon-eq}
  Let $\epsilon_{i}$ be as in Lemma~\ref{lemma:Gamma-epsilon}.
  Then one has $\epsilon_{0} = \epsilon_{1} = \cdots = \epsilon_{n}$.
\end{lemma}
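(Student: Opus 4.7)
The plan is to exploit the additional fixed hyperplanes from Proposition~\ref{proposition:rigidification}, namely the hyperplanes $\Hfrak_{x_i - 1}$ (taking $c = 0 \in S$). Since $\gamma$ fixes these hyperplanes and we already know how $\Gamma$ acts on the basis $e_0, \ldots, e_n$, we only need to translate ``fixed hyperplane'' into a linear equation among the $\epsilon_i$.

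Concretely, I would first record the homogeneous coordinates: the hyperplane $\Hfrak_{x_i - 1}$ is the zero locus of $-1 + x_i$, so $\pfrak(\Hfrak_{x_i - 1}) = (-1 : 0 : \cdots : 0 : 1 : 0 : \cdots : 0)$ with the $1$ in the $i$-th slot. In the $k$-module $k^{n+1}$ this point is represented by the vector $v_i := -e_0 + e_i$. By Proposition~\ref{proposition:rigidification}, $\gamma \Hfrak_{x_i-1} = \Hfrak_{x_i-1}$, so the vector $\Gamma v_i$ must be a $k^\times$-multiple of $v_i$.

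Next I would compute $\Gamma v_i$ using semilinearity. Letting $\sigma = \sigma_\Gamma$ denote the field automorphism of $k$ attached to $\Gamma$, we have
\[
\Gamma v_i = \Gamma(-e_0) + \Gamma(e_i) = \sigma(-1)\,\Gamma e_0 + \Gamma e_i = -\epsilon_0 e_0 + \epsilon_i e_i,
\]
using Lemma~\ref{lemma:Gamma-epsilon} in the last step. The requirement that this be a scalar multiple $\lambda v_i = -\lambda e_0 + \lambda e_i$ of $v_i$ forces $\epsilon_0 = \lambda = \epsilon_i$. Since $i \in \{1,\ldots,n\}$ was arbitrary, this yields $\epsilon_0 = \epsilon_1 = \cdots = \epsilon_n$ as claimed.

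There is no real obstacle here: the work was already done in Proposition~\ref{proposition:rigidification}, which ensures that $\Hfrak_{x_i - 1}$ is fixed (this in turn used the fact that $0 \in S$ so that $z = x_i$ and $c = 0$ give an admissible pair, and the fact that $\iota_{x_i}$ is $\Gfrak$-equivariant with image $\langle [\Hfrak_{x_i}], [\Hfrak_{x_i-1}]\rangle$, together with the dependent-triple configurations of Figure~\ref{figure:triples} that distinguish $\Hfrak_{x_i}$ from $\Hfrak_{x_i-1}$). The only subtle point worth flagging is that $\Gamma$ is only semilinear, not linear, but since the coefficients appearing in $v_i$ lie in the prime field $\{-1, 1\} \subset \Fbb_\ell \subset k_0$ they are fixed by any field automorphism, so the semilinear twist is inert in the computation above.
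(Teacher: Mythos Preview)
Your proof is correct and essentially identical to the paper's: both use that $\gamma$ fixes $\Hfrak_{x_i-1}$ (Proposition~\ref{proposition:rigidification}), represent this hyperplane by $\pm(e_0 - e_i)$, and compare $\Gamma(e_0 - e_i)$ computed two ways to force $\epsilon_0 = \epsilon_i$. One minor slip: the prime field of $k$ is not $\Fbb_\ell$ (recall $\operatorname{char}(k) \neq \ell$), but your actual point---that $-1$ lies in the prime field and is therefore fixed by any field automorphism---is of course correct.
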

\begin{proof}
  Let $0 < i \le n$ be given.
  Since $e_{0} - e_{i}$ represents $\pfrak(\Hfrak_{x_{i}-1})$, Proposition~\ref{proposition:rigidification} ensures that exists some $\epsilon$ such that $\Gamma(e_{0} - e_{i}) = \epsilon \cdot e_{0} - \epsilon \cdot e_{i}$.
  But $\Gamma$ is additive so $\Gamma(e_{0} - e_{i}) = \epsilon_{0} \cdot e_{0} - \epsilon_{1} \cdot e_{i}$, while $e_{0}$ is linearly independent from $e_{i}$, hence $\epsilon_{0} = \epsilon_{i}$.
\end{proof}

At this point we deduce from Lemmas~\ref{lemma:Gamma-epsilon} and~\ref{lemma:epsilon-eq} that $\eta$ factors through the obvious inclusion $\Aut(k) \hookrightarrow \PGSL(k^{n+1})$.
Replace $\eta$ with the corresponding map $\eta : \Gfrak \to \Aut(k)$, and replace $\Gamma$ with the element of $\GSL(k^{n+1})$ obtained as the image of $\eta(\gamma) \in \Aut(k)$.
This means that
\[ \Gamma(a_{0} \cdot e_{0} + \cdots + a_{n} \cdot e_{n}) = \eta(\gamma)(a_{0}) \cdot e_{0} + \cdots + \eta(\gamma)(a_{n}) \cdot e_{n}. \]

\begin{lemma}
  For all $s \in S$, one has $\eta(\gamma)(s) = s$.
\end{lemma}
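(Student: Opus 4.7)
The plan is to use Proposition~\ref{proposition:rigidification}, which tells us that $\gamma$ fixes each hyperplane of the form $\Hfrak_{x_i - s}$ for $s \in S$, and to translate this fixing condition into an equation in $k$ via $\eta$.

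First, I would record the homogeneous coordinates of the relevant hyperplanes. For $s \in S$ and any $1 \le i \le n$, the hyperplane $\Hfrak_{x_i - s}$ is the zero-locus of the linear form $-s + x_i$, so by the definition of $\pfrak$ the point $\pfrak(\Hfrak_{x_i - s}) \in \Pbb^n(k)$ is represented by the vector $-s \cdot e_0 + e_i \in k^{n+1}$.

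Next, I would apply Proposition~\ref{proposition:rigidification}: since $\gamma$ fixes $\Hfrak_{x_i - s}$, it fixes $\pfrak(\Hfrak_{x_i - s})$, and hence $\Gamma(-s \cdot e_0 + e_i)$ must be a scalar multiple of $-s \cdot e_0 + e_i$. Thus there exists $\lambda \in k^\times$ with
\[ \Gamma(-s \cdot e_0 + e_i) = -\lambda s \cdot e_0 + \lambda \cdot e_i. \]
On the other hand, by the formula for $\Gamma$ recorded just before this lemma, together with additivity of $\Gamma$ (which, being $k_0$-semilinear in particular, respects $\Zbb$-sums applied to basis vectors via the $e_j$-components), we have
\[ \Gamma(-s \cdot e_0 + e_i) = \eta(\gamma)(-s) \cdot e_0 + \eta(\gamma)(1) \cdot e_i = -\eta(\gamma)(s) \cdot e_0 + e_i. \]

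Finally, I would compare coefficients in these two expressions with respect to the basis $e_0, e_i$. The coefficient of $e_i$ gives $\lambda = 1$, and then the coefficient of $e_0$ forces $\eta(\gamma)(s) = s$, as required. There is no substantial obstacle here beyond correctly identifying the vector representing the fixed hyperplane; the rest is a direct comparison of coefficients using the normalization of $\Gamma$ achieved in Lemmas~\ref{lemma:Gamma-epsilon} and~\ref{lemma:epsilon-eq}.
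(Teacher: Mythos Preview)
Your proof is correct and follows essentially the same approach as the paper: use Proposition~\ref{proposition:rigidification} to see that $\gamma$ fixes the projective point corresponding to $\Hfrak_{x_i - s}$, apply the normalized $\Gamma$ (acting componentwise via $\eta(\gamma)$), and compare coefficients with respect to $e_0, e_i$ to get $\eta(\gamma)(s) = s$. The only cosmetic difference is your choice of representative $-s\cdot e_0 + e_i$ versus the paper's $s\cdot e_0 - e_i$.
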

\begin{proof}
  Let $0 < i \le n$ be given.
  Since $s \cdot e_{0} - e_{i}$ represents $\pfrak(\Hfrak_{x_{i}-s})$, Proposition~\ref{proposition:rigidification} again shows that one has $\Gamma(s \cdot e_{0} - e_{i}) = \epsilon \cdot (s \cdot e_{0} - e_{i})$, for some $\epsilon \in k^{\times}$.
  On the other hand, we have $\Gamma(s \cdot e_{0} - e_{i}) = \eta(\gamma)(s) \cdot e_{0} - e_{i}$.
  Since $e_{0}$ and $e_{i}$ are linearly independent, we find $\epsilon = 1$ and $\eta(\gamma)(s) = s$, as claimed.
\end{proof}

As $S$ generates $k_{0}$, the image of $\eta : \Gfrak \to \Aut(k)$ lands in $\Gal(k|k_{0})$.
Note that when $\gamma$ is the image of $\sigma \in \Gal(k|k_{0})$, one has $\gamma \Hfrak_{x_{i} - c} = \Hfrak_{x_{i}-\sigma c}$ for all $i = 1,\ldots,n$ and all $c \in k$.
Unfolding the definitions, this in turn implies that, in this case, one has $\eta(\gamma) = \sigma$.
In other words, the morphism $\eta : \Gfrak \to \Gal(k|k_{0})$ just constructed is a section to the canonical map $\rho : \Gal(k|k_{0}) \to \Gfrak$.
Since it will come up in the following section, we note that for all $(a_{0}:\cdots:a_{n}) \in \Pbb^{n}(k)$ and $\gamma \in \Gfrak$ with image $\sigma \in \Gal(k|k_{0})$, that one has
\[ \gamma (a_{0}:\cdots : a_{n}) = (\sigma a_{0} : \cdots : \sigma a_{n}). \]

\subsection{Conclusion}

To conclude, we prove that any element in the kernel of $\eta : \Gfrak \to \Gal(k|k_{0})$ is contained in the image of the canonical map $\Lambda^{\times} \to \Aut(\Pi^{a}_{\Hscr_{\xbf,S}})$.
Suppose $\gamma$ is in the kernel of $\eta$.
By the observation from the last subsection, it follows that $\gamma$ fixes every point of $\Pbb^{n}(k)$, hence $\gamma$ fixes every hyperplane $\Hfrak$, and hence
\[ \gamma \Ibf_{\Hfrak} = \Ibf_{\Hfrak}, \ \gamma \Dbf_{\Hfrak} = \Dbf_{\Hfrak} \]
for all hyperplanes $\Hfrak$.
As $\Ibf_{\Hfrak} \cong \Lambda$, it follows that there exists some $\epsilon_{\Hfrak} \in \Lambda^{\times}$ such that $\gamma$ acts as multiplication by $\epsilon_{\Hfrak}$ on $\Ibf_{\Hfrak}$.
Since $\Lambda \cdot \gamma [\Hfrak] = \Lambda \cdot [\Hfrak]$ as well, we find that there exists $\epsilon'_{\Hfrak}$ such that $\gamma [\Hfrak] = \epsilon'_{\Hfrak} \cdot [\Hfrak]$.
The pairing~\eqref{eqn:univ-H1-pairing} induces a duality between $\Ibf_{\Lfrak}$ and $\HHbf^{1}_{\Afrak}/\Ubf_{\Hfrak}$ which is compatible with the action of $\gamma$.
As $\HHbf^{1}_{\Afrak}/\Ubf_{\Hfrak}$ is generated by $[\Hfrak]$, we deduce that $\epsilon_{\Hfrak}' = \epsilon_{\Hfrak}^{-1}$.

\begin{lemma}\label{lemma:epsilon-indep}
  Suppose that $(\Hfrak_{1},\Hfrak_{2},\Hfrak_{3})$ is a dependent triple.
  Then $\epsilon_{\Hfrak_{1}} = \epsilon_{\Hfrak_{2}} = \epsilon_{\Hfrak_{3}}$.
\end{lemma}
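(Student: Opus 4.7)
The plan is to exploit the specialization map $\sfrak_{\Hfrak_3}$ together with the characterization of $\Ubf^{1}_{\Hfrak_3}$ in Fact~\ref{fact:Ubf-description}(2b). Write $\mu_i := \epsilon'_{\Hfrak_i} = \epsilon_{\Hfrak_i}^{-1}$ so that $\gamma [\Hfrak_i] = \mu_i \cdot [\Hfrak_i]$. It is enough, by the cyclic symmetry of the notion of a dependent triple, to show $\mu_1 = \mu_2$.

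First I would observe that, because $(\Hfrak_1, \Hfrak_2, \Hfrak_3)$ is a dependent triple, Fact~\ref{fact:Ubf-description}(2b) gives the membership $[\Hfrak_1] - [\Hfrak_2] \in \Ubf^{1}_{\Hfrak_3}$. Since $\gamma$ fixes $\Hfrak_3$, Lemma~\ref{lemma:action-Ubf} guarantees $\gamma \Ubf^{1}_{\Hfrak_3} = \Ubf^{1}_{\Hfrak_3}$, so the element
\[ \gamma\bigl([\Hfrak_1] - [\Hfrak_2]\bigr) = \mu_1 \cdot [\Hfrak_1] - \mu_2 \cdot [\Hfrak_2] \]
again lies in $\Ubf^{1}_{\Hfrak_3} = \ker(\sfrak_{\Hfrak_3})$.

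Next I would apply $\sfrak_{\Hfrak_3}$ to this element. From the explicit description of the specialization in \S\ref{subsec:univ-residue}, we have $\sfrak_{\Hfrak_3}[\Hfrak_i] = [\Hfrak_3 \cap \Hfrak_i]$ for $i = 1,2$. Because $(\Hfrak_1,\Hfrak_2,\Hfrak_3)$ is dependent, $\Hfrak_3 \cap \Hfrak_1$ and $\Hfrak_3 \cap \Hfrak_2$ coincide as hyperplanes in $\Hfrak_3$. Hence
\[ 0 = \sfrak_{\Hfrak_3}\bigl(\mu_1 \cdot [\Hfrak_1] - \mu_2 \cdot [\Hfrak_2]\bigr) = (\mu_1 - \mu_2) \cdot [\Hfrak_3 \cap \Hfrak_1] \]
in $\HHbf^{1}_{\Hfrak_3}$. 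Since $[\Hfrak_3 \cap \Hfrak_1]$ is a basis element of $\HHbf^{1}_{\Hfrak_3}$ and in particular is not annihilated by any nonzero scalar in $\Lambda$, we conclude $\mu_1 = \mu_2$, hence $\epsilon_{\Hfrak_1} = \epsilon_{\Hfrak_2}$. Cycling the roles of $\Hfrak_1, \Hfrak_2, \Hfrak_3$ yields the remaining equalities.

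I expect no serious obstacle: the key input is just that dependent triples manifest themselves through the simultaneous containment $[\Hfrak_i] - [\Hfrak_j] \in \Ubf^{1}_{\Hfrak_k}$, and that the module $\Ubf^{1}_{\Hfrak_k}$ is $\gamma$-stable because $\gamma$ fixes the hyperplane $\Hfrak_k$. The only subtle point is recalling that $[\Hfrak_3 \cap \Hfrak_1]$ is a genuine basis class in $\HHbf^{1}_{\Hfrak_3}$ (so that cancellation by this class is legitimate over $\Lambda$), which follows directly from the fact stated at the end of \S\ref{subsec:kummer-hyperplanes} applied to the affine space $\Hfrak_3$.
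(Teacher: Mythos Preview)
Your argument is correct. It is, however, the cohomological dual of what the paper does rather than the same proof. The paper works on the fundamental-group side: it picks an element $\sigma \in \Dbf_{\Hfrak_{1}}$ lifting a $\bar\sigma \in \Pibf^{a}_{\Hfrak_{1}}$ with $\bar\sigma[\Hfrak_{1}\cap\Hfrak_{2}] = 1$, observes that $\gamma\sigma$ again lies in $\Dbf_{\Hfrak_{1}}$ because $\gamma$ fixes $\Hfrak_{1}$, and then uses the explicit description of $\Dbf_{\Hfrak_{1}}$ to get $(\gamma\sigma)[\Hfrak_{2}] = (\gamma\sigma)[\Hfrak_{3}]$, which unwinds to $\epsilon_{\Hfrak_{2}} = \epsilon_{\Hfrak_{3}}$. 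You instead stay entirely on the cohomology side: you use the $\gamma$-stability of $\Ubf^{1}_{\Hfrak_{3}}$ (via Lemma~\ref{lemma:action-Ubf}) and then kill the resulting element with the specialization map $\sfrak_{\Hfrak_{3}}$. The two arguments are mirror images under the duality of \S\ref{subsec:inertia-decomp}, where $\Dbf_{\Hfrak}$ is the orthogonal of $\Ubf^{1}_{\Hfrak}$. Your route has the minor advantage of not needing to choose a lift from $\Pibf^{a}_{\Hfrak_{1}}$ to $\Dbf_{\Hfrak_{1}}$; the paper's route has the minor advantage of computing directly with the $\epsilon_{\Hfrak}$ as originally defined (via the action on $\Ibf_{\Hfrak}$) rather than passing through the reciprocals $\epsilon'_{\Hfrak}$.
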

\begin{proof}
  Put $\epsilon_{i} := \epsilon_{\Hfrak_{i}}$.
  Let $\bar \sigma \in \Pibf^{a}_{\Hfrak_{1}}$ be an element satisfying $\bar\sigma[\Hfrak_{1} \cap \Hfrak_{2}] = 1$, and let $\sigma \in \Dbf_{\Hfrak_{1}}$ be an element mapping to $\bar\sigma$ under the canonical projection $\Dbf_{\Hfrak_{1}} \twoheadrightarrow \Pibf^{a}_{\Hfrak_{1}}$.
  Hence $\sigma[\Hfrak_{2}] = 1$ and thus $\sigma[\Hfrak_{3}] = 1$ as well.
  Note that $\gamma \sigma$ is an element of $\Dbf_{\Hfrak_{1}}$ as well, since $\gamma \Hfrak_{1} = \Hfrak_{1}$.
  Hence:
  \[ (\gamma \sigma)[\Hfrak_{2}] = \sigma(\gamma^{-1}[\Hfrak_{2}]) = \epsilon_{2} \cdot \sigma [\Hfrak_{2}] = \epsilon_{2}, \]
  and similarly $(\gamma \sigma)[\Hfrak_{3}] = \epsilon_{3}$.
  But since $\gamma \sigma \in \Dbf_{\Hfrak_{1}}$, it follows that $(\gamma \sigma)[\Hfrak_{2}] = (\gamma \sigma)[\Hfrak_{3}]$, hence $\epsilon_{2} = \epsilon_{3}$.
  Repeating the argument with $\Hfrak_{1},\Hfrak_{2}$ interchanged, we deduce the claim.
\end{proof}

By Lemma~\ref{lemma:epsilon-indep}, we deduce that $\epsilon_{\Hfrak}$ does not depend on $\Hfrak$.
Indeed, if $\Hfrak_{1},\Hfrak_{2}$ meet, then they are part of a dependent triple hence $\epsilon_{\Hfrak_{1}} = \epsilon_{\Hfrak_{2}}$.
If $\Hfrak_{1},\Hfrak_{2}$ do not meet, then we find a third $\Hfrak_{3}$ which meets both to again deduce that $\epsilon_{\Hfrak_{1}} = \epsilon_{\Hfrak_{3}} = \epsilon_{\Hfrak_{2}}$.
As $\Pibf^{a}_{\Afrak}$ is generated by the $\Ibf_{\Hfrak}$ as $\Hfrak \in \Planes$ varies, it follows that there exists an $\epsilon$ such that $\gamma$ acts on the whole $\Pibf^{a}_{\Afrak}$ as multiplication by $\epsilon$.
For every $\Ucal_{\Hbf}$ in $\Hscr_{\xbf,S}$, the canonical map
\[ \Pibf^{a}_{\Afrak} \to \Pi^{a}_{\Ucal_{\Hbf}} \]
is surjective and $\gamma$-equivariant, hence $\gamma$ acts as multiplication by $\epsilon$ on $\Pi^{a}_{\Ucal_{\Hbf}}$ as well.
And finally, since $\pi_{z} : \Pibf^{a}_{\Afrak} \to \Pi^{a}_{\Pbb^{1} \smin \{0,1,\infty\}}$ is surjective for any one of the $z \in \{x_{1},\ldots,x_{n}\}$, it follows again that $\gamma$ acts as multiplication by $\epsilon$ on $\Pi^{a}_{\Pbb^{1} \smin \{0,1,\infty\}}$.
In other words, $\gamma$ acts as multiplication by $\epsilon$ on $\Pi^{a}_{X}$ for any object $X$ of $\Hscr_{\xbf,S}$, which shows that, indeed, $\gamma$ is in the image of the canonical map $\Lambda^{\times} \to \Aut(\Pi^{a}_{\Hscr_{\xbf,S}})$.
This concludes the proof of Theorem~\ref{maintheorem:general}.

\bibliography{refs}

\end{document}